\documentclass[11pt, leqno]{amsart}

\makeatletter

\def\subsection{\@startsection{subsection}{2}%
	{\parindent}{.5\linespacing}{-.5em}%
	{\normalfont\itshape\/}}
\def\subsubsection{\@startsection{subsubsection}{3}%
	{\parindent}{.5\linespacing}{-.5em}%
	{\normalfont\itshape\/}}
\def\appendix{\par\c@section\z@ \c@subsection\z@
	\gdef\theHsection{\Hy@AlphNoErr{section}}%
\let\sectionname\appendixname{}
\def\thesection{{\upshape\@Alph\c@section}}}

\newtheoremstyle{plain}{0.5\linespacing}{0.5\linespacing}{\itshape}%
	{\parindent}{\scshape}{.}{0.5em}%
	{\thmname{#1}\thmnumber{ #2}\thmnote{\normalfont{} (#3)}}
\newtheoremstyle{definition}{0.5\linespacing}{0.5\linespacing}%
	{\upshape}{\parindent}%
	{\itshape}{.}{0.5em} 
	{\thmname{#1}\thmnumber{ #2}\thmnote{\normalfont{} (#3)}}
\newtheoremstyle{remark}{0.5\linespacing}{0.5\linespacing}%
	{\upshape}{\parindent}%
	{\itshape}{.}{0.5em} 
	{\thmname{#1}\thmnumber{ #2}\thmnote{\normalfont{} (#3)}}

\renewenvironment{proof}[1][\proofname]{\par 
	\pushQED{\qed}%
	\normalfont\topsep6\p@\@plus6\p@\relax
	\trivlist%
	\item[\hskip\labelsep\hskip\parindent%
	\itshape%
	#1\@addpunct{.}]\ignorespaces%
	}{ 
	\popQED\endtrivlist\@endpefalse%
}
\makeatother
\allowdisplaybreaks

\usepackage{amsmath, amsthm, amssymb}
\usepackage{enumitem}
\usepackage{xcolor}
\definecolor{cite}{rgb}{0.30,0.60,1.00}
\definecolor{url}{rgb}{0.00,0.00,0.80}
\definecolor{link}{rgb}{0.40,0.10,0.20}

\usepackage[pdfusetitle,colorlinks,linkcolor=link,urlcolor=url,citecolor=cite,breaklinks,bookmarksdepth=3,bookmarksopen=true]{hyperref}
\usepackage[top=1.75in, headsep=0.25in, left=1.65in, right=1.7in,bottom=1.275in]{geometry}
\usepackage{url}
\usepackage{graphicx}
\usepackage{caption}
\usepackage{subcaption}
\usepackage[capitalise]{cleveref}
\usepackage{mathdots}
\usepackage{tikz-cd}
\usepackage{comment}
\usepackage{array}

\usepackage{mathtools}
\usepackage{physics}
\usepackage{float}
\usepackage[mathcal]{eucal}
\usepackage{microtype}
\usepackage{pagesel}

\newtheorem{theorem}{Theorem}[section]

\newtheorem{proposition}[theorem]{Proposition}
\newtheorem{lemma}[theorem]{Lemma}

\theoremstyle{definition}
\newtheorem{definition}[theorem]{Definition}
\theoremstyle{definition}

\theoremstyle{remark}
\newtheorem{remark}[theorem]{Remark}
\theoremstyle{remark}
\newtheorem{example}[theorem]{Example}

\newcommand{\nNatural}{\mathbb{N}}
\newcommand{\zIntegers}{\mathbb{Z}}

\newcommand{\rReal}{\mathbb{R}}
\newcommand{\rRealPos}{\rReal_{+}}
\newcommand{\cComplex}{\mathbb{C}}

\newcommand{\hHalfplane}{\mathbb{H}}
\newcommand{\idmap}{\mathrm{id}}
\renewcommand{\abs}[1]{\left|#1\right|}

\newcommand{\pullback}[1]{{#1}^{*}}
\newcommand{\pushforward}[1]{{#1}_{*}}
\newcommand{\comp}{\mathbin{\circ}}
\newcommand{\boundary}{\partial}
\let\setminusaux\setminus{}	
\renewcommand*{\setminus}{\scalemath{\mathrel}{0.7}{\setminusaux}}

\newcommand{\st}{\mid}
\newcommand{\restrict}[2]{{#1}|_{#2}}
\newcommand{\closure}[1]{\overline{#1}}
\newcommand{\conjugate}[1]{\overline{#1}}
\newcommand{\embeds}{\hookrightarrow}
\renewcommand{\complement}[1]{{#1}^c}

\renewcommand{\exp}{\operatorname{exp}}

\renewcommand{\leq}{\leqslant}
\renewcommand{\geq}{\geqslant}

\let\tmp\epsilon{}
\let\epsilon\varepsilon{}
\let\varepsilon\tmp{}

\let\tmp\phi{}
\let\phi\varphi{}
\let\varphi\tmp{}

\let\emptyset\varnothing

\newcommand{\structure}{\mathcal{S}}
\newcommand{\RrPfaff}{\rReal_{\mathrm{rPfaff}}}
\newcommand{\Rexp}{\rReal_{\exp}}
\newcommand{\Ran}{\rReal_{\mathrm{an}}}
\newcommand{\Ralg}{\rReal_{\mathrm{alg}}}

\newcommand{\s}{\raisebox{.5ex}{\scalebox{0.6}{\#}}}
\newcommand{\so}{\s\kern-.02em{}o}
\NewDocumentCommand{\format}{sO{F}}{
	\IfBooleanTF{#1}
		{#2, \ell}
		{#2}
	}
\newcommand{\degree}[1][D]{#1}
\NewDocumentCommand{\FD}{sooo}{
	\IfBooleanTF{#1}								
		{											
			\IfNoValueTF{#4}
			{\mathcal{S}_{\order[#2]{1},\poly_{#2}\!\qty(#3)}}
			{
				{\mathcal{S}^{#4}_{\order[#2]{1},\poly_{#2}\!\qty(#3)}}
			}
		}
		{											
		\IfNoValueTF{#2}
			{\mathcal{S}}
			{
				\IfNoValueTF{#4}
				{
					{\mathcal{S}_{#2,#3}}
				}
				{\mathcal{S}^{#4}_{#2,#3}}
				}		
		}
}
\NewDocumentCommand{\polyfd}{oo}{
	\IfValueTF{#1}
		{\poly_{\format[#1]}\qty(\degree[#2])}
		{\poly_{\format}\qty(\degree)}
}
\newcommand{\polyfld}{\polyfd[\format,\ell][\degree]}

\newcommand{\cell}[1][C]{\mathcal{#1}}
\newcommand{\hyperbolicParameter}[1]{\{#1\}}
\newcommand{\point}{*}
\newcommand{\disc}[1][\relax]{
	\ifx\relax#1 
		D
	\else
		D\qty(#1)
	\fi
}

\NewDocumentCommand{\puncDisc}{so}{
	\IfBooleanTF{#1}
		{D_{\infty}}
		{D_{\circ}}
	\IfNoValueF{#2}{
		\qty(#2)
		}
}

\NewDocumentCommand{\annulus}{oo}{
	\IfNoValueTF{#1} 	
		{A} 			
		{A\qty(#1,#2)} 	
}
\RenewDocumentCommand{\circle}{o}{
	\IfNoValueTF{#1}{S}{S\qty(#1)}	
}
\NewDocumentCommand{\skeleton}{m}{
	\operatorname{Skel}\qty({#1})
	}
\NewDocumentCommand{\ext}{smm}{
	\IfBooleanTF{#1}		
		{\qty(#2){}^{#3}} 	
		{#2^{#3}}			
}
\NewDocumentCommand{\hExt}{smm}{
	\IfBooleanTF{#1}			
		{\qty(#2){}^{\hyperbolicParameter{#3}}} 
		{#2^{\hyperbolicParameter{#3}}}			
}
\newcommand{\initial}[3]{#1_{#2..#3}}
\NewDocumentCommand{\nuCover}{mo}{
	\IfNoValueTF{#2}
		{{#1}_{\times\nu}}
		{{#1}_{\times#2}}
}

\newcommand{\projbase}{\initial{\pi}{1}{\ell}}
\newcommand{\projfiber}{\pi_{\ell+1}}

\newcommand{\fundamentalGroup}{\pi_1}
\newcommand{\poly}{\operatorname{poly}}
\newcommand{\polyl}{\poly_{\ell}}

\newcommand{\varZ}{\mathbf{z}}
\newcommand{\varW}{\mathbf{w}}
\newcommand{\varZeta}{\boldsymbol{\zeta}}
\NewDocumentCommand{\polydisc}{soo}{
	\IfBooleanTF{#1}					
		{\Delta_{#2}\times\Delta_{#3}}	
		{\IfNoValueTF{#2}				
			{\Delta}
			{\Delta_{#2}}
		}
}
\RenewDocumentCommand{\order}{som}{
	\IfBooleanTF{#1}
		{
		\IfNoValueTF{#2}
			{\Omega\qty(#3)}
			{\Omega_{#2}\qty(#3)}
		}	
		{
		\IfNoValueTF{#2}
			{O\qty(#3)}
			{O_{#2}\qty(#3)}
		}	
}
\NewDocumentCommand{\ball}{ooo}{
	B
	\IfValueT{#1}
		{
			\qty(\IfValueTF{#2}
					{#1, #2}
					{#1}
				\IfValueT{#3}
					{; #3})
		}
}
\NewDocumentCommand{\diam}{mo}
	{
		\operatorname{diam}\qty(#1
		\IfValueT{#2}
			{; #2})
	}
\NewDocumentCommand{\dist}{mmo}
	{
		\operatorname{dist}\qty(#1,#2
		\IfValueT{#3}
			{\,; #3})
	}
\NewDocumentCommand{\latticeComplement}{o}{
	\IfNoValueTF{#1}
		{\cComplex\setminus\zIntegers^2}
		{\cComplex\setminus{\! #1}\zIntegers^2}
}

\newcommand{\Rouche}{Rouch\'e}

\makeatletter
\newdimen\scalemath@axis{}
\newcommand*{\scalemath}[3]{%
  #1{%
    \mathpalette{\scalemath@aux{#2}}{#3}%
  }%
}
\newcommand*{\scalemath@aux}[3]{%
  \begingroup
    \everyvbox{}%
    \settoheight\scalemath@axis{$#2\vcenter{}$}%
    \raisebox{\scalemath@axis}{%
      \scalebox{#1}{%
        \raisebox{-\scalemath@axis}{%
          $\m@th#2#3$%
        }%
      }%
    }%
  \endgroup
}
\makeatother 

\hypersetup{pdfauthor={Oded Carmon}}

\title{Complex Cells in Sharply O-minimal Structures}

\author{Gal Binyamini}
\address{The Weizmann Institute of Science, Rehovot, Israel}
\email{gal.binyamini@weizmann.ac.il}

\author{Oded Carmon}
\address{The Weizmann Institute of Science, Rehovot, Israel}
\email{oded.carmon@weizmann.ac.il}

\author{Dmitry Novikov}
\address{The Weizmann Institute of Science, Rehovot, Israel}
\email{dmitry.novikov@weizmann.ac.il}

\date{\today}

\begin{document}

\begin{abstract}
	We extend the theory of complex cells introduced by Binyamini and Novikov to the sharply o-minimal setting, obtaining cellular preparation and parameterization theorems which are polynomially effective in the degrees of the relevant sets.
	Our constructions are definable, and so applying them to sets in a given reduct of $\Ran$ yields cells and cellular maps definable in the same reduct.
\end{abstract}

\maketitle

{\small \tableofcontents}

\addtocontents{toc}{\protect\setcounter{tocdepth}{1}}

\section{Introduction}\label{sec:introduction}
Complex cells, introduced in~\cite{BinyaminiNovikov2019}, are holomorphic analogs of the cells of o-minimal geometry (e.g.\ semialgebraic or subanalytic geometry).
A classical o-minimal cell is, roughly, a ``product'' $I_1\odot\cdots\odot I_n$ of intervals $\{I_j\}$, where the fibers are translated and scaled by continuous functions on their respective bases.
A complex cell $\cell[F]_1\odot\cdots\odot\cell[F]_n$ is composed in a similar manner from basic fibers $\{\cell[F]_i\}$ such as discs and annuli in the complex plane, where the radii of the fibers are determined by holomorphic functions of the relevant bases (see \Cref{sec: complex cells}).

In~\cite{BinyaminiNovikov2019}, Binyamini and Novikov develop the theory of complex cells in the context of the structures $\Ralg$ and $\Ran$.
They obtain uniform parameterization and preparation theorems for families of semialgebraic or globally subanalytic sets and functions.
In the case where the relevant sets or functions are semialgebraic, their results are effective and depend polynomially on the degrees of the polynomials defining the given sets.

The constructions in the algebraic and analytic settings differ in several points, with the following consequences.
First, in the case of sets which are, for example, sub-Pfaffian but not necessarily semialgebraic, the arguments of~\cite{BinyaminiNovikov2019} give parameterizations whose size and complexity might be exponential in the degrees of the parameterized sets.
Second, it is not clear that applying the constructions in the more general analytic setting to sets definable in some reduct of $\Ran$ yields cells definable in the same reduct --- these constructions depend on the definability of coefficients in Laurent expansions of definable functions.

In this paper, we extend the results of~\cite{BinyaminiNovikov2019} to the setting of a sharply o-minimal expansion of the real field, as defined by Binyamini, Novikov, and Zak in~\cite{BinyaminiNovikovZak2022} (see \Cref{sec:so minimal}).
An example is the structure $\RrPfaff$ generated by restricted Pfaffian functions.

We obtain effective preparation and parameterization theorems (\Cref{thm:sharp preparation,thm:cpt}), which are uniform in families, and which depend polynomially on the degrees of the relevant sets and functions.
This resolves~\cite[Conjecture 21]{BinyaminiNovikov2023}.
As an added benefit, our constructions are definable in the sense that applying them in any reduct of $\Ran$ (ignoring questions of effectivity) yields uniformly finite covers which are definable in the same reduct.
The precise statements require some technical set-up which we review in \Cref{sec:so minimal,sec: complex cells}.
The following are informal versions one may keep in mind.

{
	\renewcommand{\thetheorem}{A}
	\begin{theorem}[Sharp Cellular Parametrization Theorem, \s CPT]
		\label{thm: intro cpt}
		Let $\cell=\cell[F]_1\odot\cdots\odot\cell[F]_\ell\subset\cComplex^\ell$ be a complex cell and let $Z_1,\dots,Z_k$ be a finite collection of analytic hypersurfaces of $\cell$, all of them definable in an o-minimal structure~$\structure$.
		Then there is a finite collection of complex cells $\cell_i$ and holomorphic maps $f_i:\cell_i\to\cComplex^\ell$, all of them definable in $\structure$, such that the sets $f_i(\cell_i)\cap\cell$ cover $\cell$ and each of them is either disjoint from or contained in each of the hypersurfaces~$Z_j$.

		Assume now that $\structure$ is sharply o-minimal with FD-filtration $\{\FD[\format][\degree]\}$.
		If the hypersurfaces $\{Z_j\}$ as well as the radii functions determining the fibers $\{\cell[F]_j\}$ are in $\FD[\format][\degree]$, then the number of maps $\{f_i\}$ may be taken to be $\polyfd[\format][\degree,k]$, and the radii functions determining the fibers of the cells $\cell_i$ may be taken to be in $\FD*[\format][\degree]$.
	\end{theorem}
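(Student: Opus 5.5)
The proof follows the scheme of~\cite{BinyaminiNovikov2019}: induction on the dimension $\ell$, with the cells carrying an extension parameter, so that the statement is proved for cells $\cell$ admitting an extension $\hExt{\cell}{\rho}$ and the covering maps extend to $\hExt{\cell_i}{\rho}$. The extension supplies the room needed for the Schwarz-type and Rouché-type estimates. The two reductions would be (i) a preparation theorem, in which a definable holomorphic function $F$ on an extended cell is written, after passing to subcells, in the form unit times monomial in a fiber coordinate, and (ii) a refinement theorem, in which an extended cell is covered by cellular maps from cells with a prescribed larger extension. Theorem~A is the special case where we prepare $F=\prod_j F_j$, with $F_j$ local defining functions of the $Z_j$. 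In the sharp setting, sharp cell decomposition and the degree bounds of FD-filtrations should make each $Z_j$ the zero locus of a function in $\FD[\format][\degree]$. Since the construction must stay definable in $\structure$, the first step is to replace the Laurent-coefficient arguments of the analytic case by definable quantities: zero counts via the argument principle along fibers, and radii chosen as definable functions of the base.

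The key steps, in order, are these. First, project $\cell$ to its base $\cell_{1..\ell-1}$ and consider, for each base point $z$, the finite set of zeros of $F(z,\cdot)$ in the fiber $\cell[F]_\ell(z)$. Sharp o-minimality bounds their number by $\polyfd[\format][\degree]$. Second, use the Weierstrass-style idea that the zero set over the base is a finite branched cover. We apply the induction hypothesis to the base with respect to the discriminant locus and the boundary-crossing locus (the points where zeros approach the fiber boundary); both are definable with format and degree bounded polynomially, by the filtration axioms. Over the resulting base cells, the zeros are then distinct and stay away from the boundary. Third, after pulling back by a $\nu$-fold covering map $\nuCover{\cdot}$ on each punctured-disc or annulus factor to kill monodromy, the zeros become graphs of holomorphic definable functions $y_1,\dots,y_d$. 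We then decompose the fiber into discs centered at the $y_i$ and annuli and punctured discs between them, following the clustering construction of~\cite{BinyaminiNovikov2019}. The radii are chosen as definable functions such as fractions of $|y_i-y_j|$ or minima of these. This gives cells on which $F$ is nonvanishing or vanishes exactly on the graph of some $y_i$. Finally, all constituent counts are polynomial, so the total number of maps is $\polyfd[\format][\degree,k]$. The dependence on $k$ comes from the product of $k$ functions, which multiplies the degree only linearly.

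The main obstacle is making the refinement theorem effective and definable while keeping the radii in $\FD*[\format][\degree]$. Each clustering step in the fiber decomposition has to be done with holomorphic radii, since complex cells need holomorphic radius functions, whereas $|y_i-y_j|$ is only real-analytic. In~\cite{BinyaminiNovikov2019} this was resolved using Laurent expansions or the semialgebraic structure. My first attempt here would be to use holomorphic functions such as $y_i-y_j$ themselves as radii, after further covering to make them nonvanishing and single-valued, together with the hyperbolic-metric lemmas (Schwarz–Pick on the extended cell) to control how these ratios vary. The task is then to show that a bounded number of covers, polynomial in the degree and independent of $\rho$ up to the $\order[\ell]{1}$ exponents, suffices to pass from extension $\rho$ to the required one. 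I expect the bookkeeping of how format stays $\order[\format]{1}$ and degree stays $\poly$ through the $\ell$ inductive levels to be routine once this step is in place.
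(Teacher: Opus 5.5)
You have a genuine gap, and it sits exactly where the paper locates the main difficulty. The hard step is reducing to a Weierstrass cell (\Cref{def: weierstrass cell}) for $\bigcup_j Z_j$ with gap $\gamma=1-1/\poly_F(D,k)$. That means finding, after a cover of the base of polynomial size, a collar $A(r_1,r_1/\gamma)$ in the last fiber that no $Z_j$ meets over \emph{any} base point. You handle this by applying the induction hypothesis to a ``boundary-crossing locus'' and asserting that the zeros then stay away from the boundary. This cannot work:
\begin{itemize}
\item The set of base points over which $Z$ meets $\partial\mathcal{F}_\ell(z)$ is cut out by a real condition such as $|y_i(z)|=|r(z)|$. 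It is a real hypersurface, not a complex-analytic one, so the inductive CPT does not apply to it.
\item No finite cellular cover can be compatible with such a set. Full-dimensional cells have open images, and finitely many lower-dimensional cells cannot cover a set of real codimension one.
\item Staying away must also be quantitative, with a polynomially controlled gap, to produce cells with controlled extension.
\item Without the gap, the projection of $Z$ to the base is not proper. The number of zeros in $\mathcal{F}_\ell(z)$ jumps as $z$ moves, so there is no finite branched cover. The discriminant construction, which needs properness, is unavailable, and the clustering of~\cite{BinyaminiNovikov2019} cannot begin.
\item The fiberwise bound $\poly_F(D)$ on the number of zeros gives, by pigeonhole, a zero-free radial annulus only for each base point separately. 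It is not uniform, because the zeros move radially with the base point.
\end{itemize}

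The missing idea is a mechanism that controls this radial motion. The paper does it in three stages:
\begin{itemize}
\item It first removes an $\epsilon$-net (\Cref{sec: removing an epsilon net}). For each $p\in\epsilon\mathbb{Z}^2\cap A(1,6)$ it applies the inductive CPT in the base to the complex-analytic sets $\pi_{1..\ell}(Z_i\cap\{z_{\ell+1}=p\})$. After pulling back, $\pi_{\ell+1}(Z_i)$ omits the lattice near the outer boundary.
\item It then proves the radial bound (\Cref{prop: RB}): each connected component of $Z$ moves radially by at most $\epsilon\cdot\poly_{F,\ell}(D)$. The tools are the fundamental lemma for $\mathbb{C}\setminus\epsilon\mathbb{Z}^2$ (\Cref{lem:fundamental lemma}), applied to $\pi_{\ell+1}$ composed with parametrizations of $Z$. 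These come from rotated Weierstrass polydiscs (\Cref{prop: Weierstrass polydiscs}) and $(\text{\s CPT}_1)$, plus a skeleton argument reducing annuli in the base to polydiscs.
\item Taking $\epsilon^{-1}=\poly_F(D,k)$ and pigeonholing gives the uniform collar, after which Step 1 applies.
\end{itemize}
This only works because the complexity of the covering maps is independent of the number of hypersurfaces. There are $\poly(1/\epsilon)$ of them, and $\epsilon$ is chosen after that complexity is known (\Cref{rem : no M in complexity}).

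Your reduction to preparing $F=\prod_j F_j$ runs against this. It makes the degree grow like $kD$, so it would only give radii whose complexity depends on $k$, which is weaker than the statement. You also give no argument that global definable defining functions $F_j$ of controlled complexity exist. The paper only has local ones (\Cref{lem: definable hypersurface}) and works with the hypersurfaces directly.

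Conversely, the obstacle you emphasize is not the hard part. The sharp refinement theorem carries over from~\cite{BinyaminiNovikov2019} directly. Once a Weierstrass cell with polynomial gap is available, the clustering with holomorphic radii built from the sections on $\nu$-covers goes through. Its complexity is controlled by \s CD, since each section is a connected component of a set in the filtration.
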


	\renewcommand{\thetheorem}{B}
	\begin{theorem}[Sharp Cellular Preparation Theorem, \s CPrT]
		\label{thm: intro cprt}
		The holomorphic maps $f_i:\cell_i\to\cComplex^\ell$ in the previous theorem may be taken to be of the form
		\begin{equation}
			(f_i)_j(z_1,\dots,z_n)=\pm z_j^{q_{i,j}}+\phi_{i,j}(z_1,\dots,z_{j-1}),\quad 1\leq j\leq \ell,
		\end{equation}
		where $q_{i,j}\in\zIntegers\setminus\{0\}$ and $\phi_{i,j}$ is holomorphic.
		That is the maps $f_i$ are given, up to signs, by the composition of power maps and holomorphic translations which depend only on previous variables.
	\end{theorem}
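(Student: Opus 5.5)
The plan is to prove Theorem B by induction on the dimension $\ell$, simultaneously with Theorem A. The cellular maps are built fiber by fiber as compositions of a translation in the last variable, depending on the base variables, followed by a power map in the last variable.

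Write $\cell=\cell_{1..\ell-1}\odot\cell[F]_\ell$ and consider the hypersurfaces $Z_1,\dots,Z_k$.

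1. \textbf{Weierstrass-type preparation.} After a preliminary decomposition of the base (using the induction hypothesis), represent the union of the $Z_j$, fiberwise over the base, as the zero locus of a polynomial in $z_\ell$ with definable holomorphic coefficients. Its degree is the fiber cardinality, which in the sharply o-minimal setting is bounded by $\polyfd[\format][\degree,k]$. The discriminant and the other "bad" loci, such as where fiber roots collide or approach the boundary of $\cell[F]_\ell$, form definable analytic hypersurfaces of $\cell_{1..\ell-1}$ with controlled format and degree.

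2. \textbf{Base refinement.} Apply the induction hypothesis (Theorems A and B in dimension $\ell-1$) to these base hypersurfaces. This yields base cells $\cell_i'$ with maps $f_i'$ of the required triangular form. Over each base cell the roots of the fiber equation either vanish identically or are distinct and never collide.

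3. \textbf{Monodromy and power maps.} Over each base cell, the roots $y_1,\dots,y_d$ of the pulled-back fiber equation are multivalued, with monodromy around the annular base fibers. Compose with a further power map $z_j\mapsto z_j^{q}$ in the annular base coordinates, with $q$ dividing $d!$ and hence still bounded polynomially. This kills the monodromy, making each root $y_s(z_1,\dots,z_{\ell-1})$ a single-valued holomorphic, definable function.

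4. \textbf{Fiber covering.} Cover the fiber $\cell[F]_\ell$ minus the roots by discs and annuli centered at the roots, together with punctured discs around each root and points $\{y_s\}$. The number of pieces is polynomial in $d$. Each piece is written as $\pm z_\ell^{q}+y_s(z_{1..\ell-1})$ applied to a standard fiber; here $q=\pm1$ suffices, and negative exponents handle neighborhoods of infinity. The radii are definable from the $y_s$ and the original radii, so they stay in $\FD*[\format][\degree]$.

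Composing with the base maps keeps the triangular form, since the translations depend only on earlier variables and power maps in earlier coordinates compose within the same shape. Each resulting image is either disjoint from or contained in each $Z_j$, since the $Z_j$ fiberwise are unions of root graphs.

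The main obstacle is step 4's geometric covering. A naive covering by annuli between consecutive roots does not give images whose union contains $\cell$ with the cell structure intact. One must choose the centers and radii definably, for instance via nearest-root distances, so that the union still covers while the radii remain holomorphic, which needs the extension or shrinking machinery of complex cells. I would first try a cluster decomposition by distances between roots. The clusters and their separation ratios are definable with polynomial complexity, and a recursive cluster tree of depth at most $d$ gives the covering. I expect the effectivity bookkeeping through the induction — that compositions stay in $\FD*[\format][\degree]$ — to be tedious but routine.
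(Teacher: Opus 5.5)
\textbf{Main gap: reduction to a Weierstrass cell.} Your Step 1 assumes away the central difficulty. To write $\bigcup_j Z_j$ over a base cell as the zero set of a monic polynomial in $z_\ell$ with holomorphic coefficients, the projection of $\bigcup_j Z_j$ to the base must be proper. That is, the fiber points must stay away from $\partial\mathcal{F}_\ell$ and from a puncture. In general they do not: $Z_j$ leaves through the fiber boundary. The set of base points over which this happens is the projection of $Z_j\cap\{|z_\ell|=|r(z_{1..\ell-1})|\}$. This is a real-codimension-one definable set, not a complex analytic hypersurface. So the inductive CPT in the base cannot make cells compatible with it, and there is nothing to ``remove''.

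What you actually need is a Weierstrass cell. This means a new fiber boundary inside the extension collar, whose collar of relative width $1/\mathrm{poly}_F(D,M)$ is free of $Z$ \emph{uniformly over the whole base}. Counting roots at each base point gives a free radius there. But the roots move radially as the base point varies, so this yields no common radius. Producing one is the main content of the paper's Step 2, with the outer radius normalized to $1$:
\begin{itemize}
\item Remove an $\epsilon$-net by applying the inductive hypothesis in the base to the codimension-one sets $\pi_{1..\ell-1}(Z_i\cap\{z_\ell=p\})$, for $p\in\epsilon\mathbb{Z}^2\cap A(1,6)$.
\item Prove the radial bound $\operatorname{diam}\,|\pi_\ell(Z_0)|<\epsilon\cdot\mathrm{poly}$ for each connected component $Z_0$. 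This uses the fundamental lemma for $\mathbb{C}\setminus\epsilon\mathbb{Z}^2$, after reducing to polydisc bases via the skeleton argument and a unitary rotation into Weierstrass polydiscs.
\item Pigeonhole a common free radius.
\end{itemize}
Properness near a puncture needs a separate definable Remmert--Stein argument. Only after all this do your Steps 2--4 (discriminant, $\nu$-covers, clustering) apply; they are essentially the paper's Step 1, following BN2019.

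So the obstacle you single out in Step 4 is not the main one. Even so, it is not just definable bookkeeping. The cluster pattern of the roots varies over the base. Holomorphic radii come from the hyperbolic fundamental lemma applied on cells with large extension, not from nearest-root distances.

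\textbf{Problems specific to Theorem B.} First, a power $q$ dividing $d!$ is not polynomially bounded. The $\nu$-covers must be taken per connected component of the covering, with $\nu$ at most that component's number of sheets.

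Second, prepared maps are not closed under composition; for example $(z+b)^2+a$ is not prepared. The full construction interleaves power maps in base variables with translations coming from refinement, clustering, normalization and inversion at deeper levels of the induction. So your claim that the composite ``keeps the triangular form'' is unjustified. The paper only guarantees that its CPT maps are translates in the last variable and prepared in the next-to-last one.

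The paper therefore does not track preparedness through the construction. It proves a weak CPT with merely cellular maps, and then proves the CPrT by induction on length, as follows:
\begin{itemize}
\item Given a cellular $f$, apply the CPT to the graph $\{w=\tilde f(z_{1..\ell})\}$ inside $\mathcal{C}_{1..\ell-1}\odot\mathbb{C}\odot\mathcal{F}_\ell$.
\item Pieces contained in the graph have a point fiber, and their $w$-component has the form $\pm\eta^{q}+\phi(\zeta)$.
\item Deleting the $w$-coordinate gives cellular maps $\tilde g_j$ that cover the cell, with $f\circ\tilde g_j$ prepared in the last variable.
\end{itemize}
Applying this to the maps produced by the weak CPT yields Theorem B.
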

}

We note that, similarly to~\cite{BinyaminiNovikov2017}, the application of our results to real analytic sets requires definability of their complex continuations.
For example, they may be applied to holomorphic-Pfaffian functions, that is functions admitting holomorphic continuations whose graph is Pfaffian as a real set (see also~\cite{Kaiser2016} for results regarding the definability of holomorphic extensions).
In this case, the complexity of the resulting parameterizations depends on the complexity of the graph of such a holomorphic continuation, rather than that of the original real sets.

We defer the discussion of the applications of \Cref{thm: intro cpt,thm: intro cprt} to real sets to~\cite{CarmonAnalyticallyGenerated}, where we define the appropriate subclass of sharply o-minimal structures, called \emph{analytically generated structures} (similar to those described in~\cite[Section 3.3]{BinyaminiNovikov2023}), and obtain sharp versions of the Yomdin--Gromov lemma and of the subanalytic preparation theorems of Parusinski and Lion--Rolin~\cite{Parusinski1994,LionRolin1997} (see also~\cite{vdDriesSpeisseger2002}), as well as establishing Wilkie's conjecture on polylogarithmic point counting for such structures.

In a subsequent paper~\cite{BinyaminiCarmonNovikovLE}, we use the preparation theorems of~\cite{CarmonAnalyticallyGenerated} to obtain sharp versions of the logarithmic--exponential preparation theorems of~\cite{LionRolin1997,vdDriesSpeisseger2002}.
We deduce from these the sharp o-minimality of certain structures, including~$\Rexp$, as well as establish Wilkie's conjecture for such structures.
In the case of $\Rexp$, this conjecture was recently resolved in~\cite{BinyaminiNovikovZak2024} by Binyamini, Novikov, and Zak.

This paper is organized as follows. 
In \Cref{sec:so minimal,sec: complex cells}, we review the necessary definitions and facts we need regarding sharp o-minimality and complex cells, as well as give the precise versions of \Cref{thm: intro cpt,thm: intro cprt}.
In \Cref{sec: proof outline,sec:fundamental lemma}, we outline the proofs of the main theorems and establish an analog of the \emph{fundamental lemma}~\cite[Lemma 22]{BinyaminiNovikov2019} that we use in the proofs.
Then, in \Cref{sec: main theorems proofs}, we give the proofs of the main theorems.
Finally, in \Cref{sec: real cells}, we recall the notions of real complex cells and real cellular maps and discuss the corresponding variants of the main theorems.

\begin{remark}[Asymptotic notation]
	We follow the convention in~\cite{BinyaminiNovikov2019,BinyaminiNovikovZak2022}, according to which each appearance of an expression of the form $c=\poly_{a}(b)$ means that $c$ is bounded from above by the value at $x=b$ of some polynomial $p_a(x)\in\rReal_{> 0}[x]$ depending only on $a$.
	This polynomial may be different at each occurrence of this notation, which only serves as shorthand.
	
	Similarly, an expression of the form $c=\order[a]{b}$ (respectively, $c=\order*[a]{b}$) means $c\leq C_a\cdot b$ (respectively, $c\geq C_a\cdot b$), where $C_a$ is some positive constant depending only on $a$, which may be different at each occurrence of this notation.
	Omitting $a$ from this notation (e.g.\ writing $c=\order{b}$) means that the relevant constant may be chosen independently of any other parameter.

	In particular, statements of the form ``If $c=\order{1}$, then the following holds'' should be read as ``There exists a constant $C$ such that if $c\leq C$, then the following holds'' and not ``If there exists a constant $C$ such that $c\leq C$, then the following holds''.

	When working with a fixed FD-filtration $\qty{\FD[\format][\degree]}$ (see \Cref{sec:so minimal}), we will suppress the dependence on the filtration from this notation.
  \end{remark}

\subsection{Acknowledgments}
The second author thanks Susheel Shankar for helpful discussions.

\addtocontents{toc}{\protect\setcounter{tocdepth}{2}}
\section{Preliminaries}
\label{sec: generalities}

\subsection{Sharply o-minimal structures}\label{sec:so minimal}
We use the framework of sharply o-minimal (\so-minimal, for short) structures, introduced by Binyamini, Novikov, and Zak in~\cite{BinyaminiNovikovZak2022}.
Examples of such structures include any reduct of the structure $\RrPfaff$, generated by restricted Pfaffian functions (see~\cite[Section 1.4.3]{BinyaminiNovikovZak2022}).
We assume familiarity with the standard notions of an o-minimal structure (by which we will always mean an o-minimal expansion of the real field) and o-minimal cell decomposition (see \cite{vdDries1998,Coste1999}).

In this section, we review the main definitions and facts related to \so-minimality, following~\cite{BinyaminiNovikovZak2022} up to some slight changes in terminology and presentation.

We associate pairs of numbers, called format and degree, to sets definable in an o-minimal structure.
These numbers encode the geometric complexity of the sets and serve as analogs for the ambient dimension and algebraic degree of an algebraic variety $\qty{P=0}\subset\rReal^n$, where $P\in\rReal[x_1,\dots ,x_n]$.
We record these as a double filtration on the collection of definable sets, as follows\footnote{We allow in the definition collections $\structure$ which are not structures, however this is a technicality meant to allow ``filtrations generated by a collection of sets'' which we define later in this section.}.

\begin{definition}
	\label{def: FD filtration}
	Let $\structure=\bigcup_{n\in\nNatural} \structure_n$, where $\structure_n$ is a collection of subsets of $\rReal^n$.
	A \emph{format--degree filtration} on $\structure$ (or \emph{FD-filtration} for short) is a collection $\{\structure_{\format,\degree}\}_{\format,\degree\in\nNatural}$ of sets satisfying the following conditions.
	\begin{enumerate}
		\item[(FD1)] Each $\structure_{\format,\degree}$ is a collection of sets in $\structure$.
		\item[(FD2)] Every set in $\structure$ belongs to some $\structure_{\format,\degree}$.
		\item[(FD3)] $\structure_{\format,\degree}\subset \structure_{\format+1,\degree} \cap \structure_{\format,\degree+1}$ for all $\format,\degree$,
		\item[(FD4)] If $A\subset \rReal^n$ is in $\structure_{\format,\degree}$, then $\format\geq n$.
	\end{enumerate}
	A set in $\structure$ is said to have \emph{format} $\format$ and \emph{degree} $\degree$ (with respect to a given FD-filtration) if it belongs to $\structure_{\format,\degree}$.
	Similarly, a function whose graph is in $\structure$ is said to have format $\format$ and degree $\degree$ if its graph belongs to $\structure_{\format,\degree}$.
\end{definition}
For the sake of brevity, we will also write $\qty{\FD[\format][\degree]}$ for the pair $(\structure,\qty{\FD[\format][\degree]}_{\format,\degree\in\nNatural})$ consisting of $\structure$ and an FD-filtration on it.

We will usually consider \Cref{def: FD filtration} when the collection $\structure$ is a structure.
The following is a quantitative version of the standard finiteness axiom for o-minimal structures, with polynomial dependence on the degree of a definable set.
\begin{definition}
	The FD-filtration $\qty{\FD[\format][\degree]}$ is \emph{sharply o-minimal} if there exists a collection of polynomials $\qty{P_{\format} \in\rReal[t]}_{\format\in\nNatural}$ such that the following condition holds.
	\begin{enumerate}\setcounter{enumi}{5}
		\item[(O)] If $A\subset \rReal$ and $A\in\structure_{\format,\degree}$, then $A$ has at most $P_{\format}(\degree)$ connected components.
	\end{enumerate}
\end{definition}

In particular, a structure $\structure$ admitting a sharply o-minimal FD-filtration is an o-minimal structure.
In~\cite{BinyaminiNovikovZak2022}, several notions of compatibility between an FD-filtration on a structure and the first-order construction of definable sets are introduced.
In this paper we will only need the following version.
We write $\complement{A}$ for the complement of a set $A\subset\rReal^n$ and $\pi(A)$ for its natural projection to~$\rReal^{n-1}$.

\begin{definition}
	Let $\structure$ be a structure and let $\{\FD[\format][\degree]\}$ be an FD-filtration on~$\structure$.
	We will say $\{\FD[\format][\degree]\}$ is \emph{sharply compatible} with $\structure$ if it satisfies the following conditions.
	For every $A\in\structure_{\format,\degree}$, we have
	\begin{enumerate}
		\item[(\s1)] $A\times\rReal, \rReal\times A\in \structure_{\format+1,\degree}$,
		\item[(\s2)] $\complement{A},\pi(A)\in\structure_{\format,\degree}$.
	\end{enumerate}
	If $A_1,\dots,A_k\subset \rReal^n$ are such that $A_i\in\structure_{\format,\degree_i}$, then
	\begin{enumerate}\setcounter{enumi}{4}
		\item[(\s3)] $\bigcap_i A_i,\,\bigcup_i A_i\in\structure_{\format,\,\,\sum D_i}$.
	\end{enumerate}
	If $P\in\rReal[x_1,\dots,x_n]$, then 
	\begin{enumerate}\setcounter{enumi}{3}
		\item[(\s4)] $\qty{P=0}\in\structure_{n,\deg P}$.
	\end{enumerate}
\end{definition}

More generally, given any $\structure=\bigcup_{n\in\nNatural} \structure_n$, where $\structure_n$ is a collection of subsets of $\rReal^n$, and an FD-filtration $\qty{\FD[\format][\degree]}$ on $\structure$, we may consider the structure generated by $\structure$ with the FD-filtration \emph{sharply generated} by $\qty{\FD[\format][\degree]}$, i.e.\ the FD-filtration obtained by closing $\qty{\FD[\format][\degree]}$ under the axioms (\s1)--(\s4).

\begin{definition}
	A structure $\structure$ equipped with a sharply compatible \so-minimal FD-filtration $\{\structure_{\format,\degree}\}_{\format,\degree\in\nNatural}$ is said to be \emph{sharply o-minimal}.
\end{definition} 

As for general o-minimal structures, the condition (O) on definable subsets of $\rReal$ implies the following similar bound for all definable sets.

\begin{proposition}[{\cite[Proposition 2.1]{BinyaminiNovikovZak2022}}]
	\label{prop: so minimality connected component bound}
	Let $\qty{\FD[\format][\degree]}$ be a sharply o-minimal structure and let $X\in\FD[\format][\degree]$.
	Then $X$ has at most $\polyfd$ connected components.
\end{proposition}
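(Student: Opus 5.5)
The plan is to induct on the ambient dimension $n$ of $X\subset\rReal^n$, following the cylindrical structure of o-minimal sets. The base case $n=1$ is exactly axiom (O): a set $X\subset\rReal$ in $\FD[\format][\degree]$ has at most $P_{\format}(\degree)$ components. By (FD4), $n\leq\format$, so the induction has at most $\format$ steps. Hence any polynomial loss incurred at each step, with polynomial depending only on $\format$, still yields a bound of the form $\polyfd$.

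\textbf{Inductive step.} Rather than build a full cell decomposition, whose complexity is hard to control using only the axioms (\s1)--(\s4), I would bound the number of components of $X$ by counting points of a definable ``selection'' set. For a bounded definable set (after a semialgebraic compactification such as $x\mapsto x/\sqrt{1+|x|^2}$, which is a bijection whose graph has degree $O(1)$, so by (\s1)--(\s4) and projection the image stays in $\FD[\format+O(1)][\poly(\degree)]$), every connected component $K$ contains a point minimizing the distance to a generic fixed point, or more robustly, a point where the first coordinate $x_1$ attains an extremal value on the closure of $K$. Concretely, I would use the following fibration argument.

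Let $\pi_1:\rReal^n\to\rReal$ be the first coordinate and consider the fibers $X_t=X\cap\{x_1=t\}$. These are uniformly in $\FD[\format][\degree+1]$ by (\s3) and (\s4), and after projecting away $x_1$ they lie in $\rReal^{n-1}$. By induction each fiber $X_t$ has $\polyfd$ components. Components of $X$ can only merge or split at finitely many values of $t$. The set of such ``bad'' values should be definable by a first-order formula in $X$ of bounded quantifier complexity: for instance, the set of $t$ over which the number of fiber components changes locally, or where the fiber fails to be locally trivial. This set is then in $\FD[\format+O(1)][\poly(\degree)]$, so by (O) it consists of $\polyfd$ points and intervals.

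\textbf{Main obstacle.} The count itself is then easy: over each of the $\polyfd$ complementary intervals the components of $X$ are unions of fiber components, and their number is bounded by the fiber count. The difficulty is making the bad set definable with controlled format and degree, since ``number of connected components'' is not first-order. I would first try to replace it by a first-order surrogate via a definable distance-type function. Using the squared Euclidean distance, which is polynomial, I would define $\delta$-connectivity by a formula quantifying over one auxiliary point, so that it costs $O(1)$ in format and $\poly$ in degree. Alternatively, I would pass to critical points of a generic linear projection restricted to the boundary of $X$, which are first-order definable from $X$ using (\s2) for complements and projections.

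If this becomes too delicate, a fallback is to follow the proof of \cite[Proposition 2.1]{BinyaminiNovikovZak2022} directly. Since the statement is cited from there, one may also simply invoke it.
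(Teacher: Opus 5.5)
\textbf{Verdict: there is a genuine gap.} Your inductive step depends on the set $\Sigma\subset\mathbb{R}$ of ``bad'' values of $x_1$ lying in $\mathcal{S}_{O_F(1),\operatorname{poly}_F(D)}$. That is exactly the point you leave open, and neither of your surrogates supplies it.

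\emph{Why the surrogates fail.} Two points being $\delta$-close is first-order. But $\delta$-connectivity is the transitive closure of that relation, i.e.\ the existence of $\delta$-chains of unbounded length, and no formula with boundedly many variables expresses it. So \Cref{prop: so minimality formula complexity} gives you nothing. Critical points of a linear projection on $\partial X$ only make sense after stratifying an arbitrary, possibly non-closed and singular, definable set. Controlling the complexity of such a stratification or trivialization is essentially sharp cell decomposition. The hypotheses give you only (O) and (\s1)--(\s4), not \s CD.

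\emph{The first candidate bad set is also the wrong one.} Take $X=\{(t,0):0<t\le\frac12\}\cup\{(t,1):\frac12<t<1\}$. Every fiber over $(0,1)$ is a single point, so the fiber count never changes, yet $X$ has two components. Your counting step therefore needs the locus where $X$ fails to be definably trivial over $x_1$, which is even harder to define with bounds. Your fallback of citing \cite{BinyaminiNovikovZak2022} is all the paper itself does, but it is not an argument.

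\emph{A route that works.} Do not locate bad fibers at all. Instead, pick one point in each component by a first-order condition whose $x_1$-values form a finite set. Your remark about extremal values of $x_1$ points this way, but it must be applied to compact subsets of $X$, not to closures of components: the open unit disc minus the real axis has two components but a connected closure.
\begin{enumerate}
\item[(i)] Let $K_t=\{x\in X: |x|\le 1/t,\ |x-y|\ge t \text{ for all } y\in\overline{X}\setminus X\}$. It is compact, contained in $X$, and cut out by a formula in $O(n)$ variables whose atoms are $X$ and bounded-degree polynomial sets. Hence $K_t\in\mathcal{S}_{O(F),O(D)}$ uniformly in $t$. For small $t$ it meets every component of $X$, so $X$ has at most as many components as $K_t$.
\item[(ii)] For compact definable $K$, let $M$ be the first-order definable set of local minima of $x_1|_K$. $K$ has finitely many components, so each is clopen in $K$, and $x_1$ attains its minimum on it at a point of $M$. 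The set $x_1(M)$ has empty interior. Otherwise definable choice and monotonicity give a continuous definable curve $\gamma(s)\in M$ with $x_1(\gamma(s))=s$, and the points $\gamma(s)$ with $s<s_0$ contradict local minimality at $\gamma(s_0)$. Hence $x_1(M)$ is finite, and by (O) it has $\operatorname{poly}_F(D)$ points.
\item[(iii)] Distinct components of $K$ contain distinct components of the fibers $K\cap\{x_1=s\}$ for $s\in x_1(M)$. Viewed in $\mathbb{R}^{n-1}$, these fibers have format $\max(F,O(n))$ and degree $O(D)$.
\end{enumerate}
Induction on $n\le F$ then closes the argument: the format stays bounded in terms of $F$, and the degree grows by a constant factor per step.
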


The following proposition, which follows directly from (FD1)--(FD4) and (\s1)--(\s4), may be used to bound the format and degree of sets and functions which are definable in a \so-minimal structure.

\begin{proposition}[{\cite[Proposition 1.13]{BinyaminiNovikovZak2022}}]\label{prop: so minimality formula complexity}
	Let $\qty{\FD[\format][\degree]}$ be a sharply generated FD-filtration on the structure $\structure$.
	Let $\psi$ be a first order formula containing $n$ different variables ($m$ of which are free) and whose atomic predicates are all of the form $(x_i\in X_i)$ for $X_i\in\FD[\format_i][\degree_i]$.
	Denote $\format=\max\qty{n,\format_1,\format_2,\dots}$ and $\degree=\sum \degree_i$.
	Then the subset of $\rReal^m$ defined by $\psi$ is in $\FD*[\format][\degree]$.
\end{proposition}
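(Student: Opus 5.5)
The plan is to argue by induction on the construction of the formula $\psi$, after first putting it in prenex normal form and replacing universal quantifiers by existential ones via $\forall x\,\phi \equiv \neg\exists x\,\neg\phi$. The invariant is: for every subformula $\phi$ of $\psi$, the set $S_\phi\subset\rReal^n$ of all $n$-tuples of variable values satisfying $\phi$ (treating every variable occurring in $\psi$ as a coordinate of the ambient $\rReal^n$, so that all subformulas live in the same space) lies in $\FD[\format][\degree_\phi]$. Here $\degree_\phi$ is the sum of the $\degree_i$ over the atomic predicates occurring in $\phi$, counted with multiplicity. Since the final set in $\rReal^m$ is obtained from $S_\psi$ by projecting away the $n-m$ non-free coordinates, and projections preserve format and degree by (\s2), this gives the statement with $\format=\max\{n,\format_i\}$ and $\degree=\sum\degree_i$. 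The ``$\FD*$'' in the statement, that is format $O(\format)$ and degree $\poly_\format(\degree)$, allows slack, but I expect the argument to give format exactly $\max\{n,\format_i\}$ and degree linear in $\sum \degree_i$.

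\emph{Atomic predicates.} Let $x_i\in X_i$ with $X_i\subset\rReal^{k_i}$, where $x_i$ is a tuple of $k_i$ of the $n$ variables. Repeated application of (\s1) places the cylinder $X_i\times\rReal^{n-k_i}$ in format $\format_i+(n-k_i)$. This is the main obstacle: that format can exceed $\max\{n,\format_i\}$. To avoid it I would use the filtration \emph{sharply generated}, closed under (\s1)--(\s4), together with the convention from \cite{BinyaminiNovikovZak2022} that the format of $X_i$ already accounts for the ambient variables. Concretely, if the bound is read as $\FD*[\format][\degree]$ with $\format=O(\max\{n,\format_i\})$, then $\format_i+n\le 2\max\{n,\format_i\}$, and this suffices. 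The tuple $x_i$ may also be a permutation or contain repeated variables, in which case we additionally intersect with diagonals $\{x_a=x_b\}$. These are zero sets of linear polynomials, hence lie in $\FD[n][1]$ by (\s4), and permutations are handled by projections from such graph-of-permutation sets. Each of these contributes degree $\order{1}$ and format $\order{n}$, so after using (FD3) the bounds remain of the claimed form.

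\emph{Connectives and quantifiers.} A negation gives a complement, which keeps format and degree by (\s2). Conjunctions and disjunctions give intersections and unions in a common $\rReal^n$; by (\s3) they have the maximal format (after raising formats via (FD3)) and the summed degree. An existential quantifier $\exists x_j$ is handled as follows. Its matrix lives in $\rReal^n$; we project away the $x_j$ coordinate via (\s2), after permuting it last as above, and then re-cylinder with (\s1) back to $\rReal^n$, costing at most $+1$ in format. To avoid accumulating format across nested quantifiers, I would instead keep all $n$ coordinates throughout and only project at the very end. So $\exists x_j\phi$ is realized as $\pi_j^{-1}\pi_j(S_\phi)$, and this costs a bounded format increase only once per quantifier depth. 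Since the depth is at most $n$, the format stays $\order{\max\{n,\format_i\}}$ and the degree stays $\sum\degree_i$ up to the $\order{n}$ diagonal contributions. This is within $\FD*[\format][\degree]$.
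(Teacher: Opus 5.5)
Your overall route (structural induction on $\psi$ through (\s1)--(\s4), with the slack in $\mathcal{S}_{O_F(1),\poly_F(D)}$ absorbing the cylinders, diagonals and permutations needed to place each atom in a common $\rReal^n$) is what the paper means by ``follows directly from the axioms''. Your atomic and Boolean steps are fine. The gap is in the quantifier step.

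In your scheme every quantifier costs a positive amount of format. Everything therefore hinges on your claim that there are at most $n$ quantifiers, which you get from prenex normal form. But prenex conversion in general renames bound variables, and the number of fresh variables grows with the number of quantifier occurrences in $\psi$. That number is not bounded in terms of $F$, so the format you obtain would depend on $D$.

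This is not only a flaw in the write-up: if variables may be re-bound, the statement is false. Let $f(t)=t^2-2$, so that $\Gamma=\{(s,t):t=f(s)\}\in\mathcal{S}_{2,2}$ and $\{0\}\in\mathcal{S}_{1,1}$ by (\s4). Put $\alpha_0(x):=(x\in\{0\})$, $\beta_0(y):=(y\in\{0\})$, and
\[
\alpha_{k+1}(x):=\exists y\,\bigl((x,y)\in\Gamma\wedge\beta_k(y)\bigr),\qquad \beta_{k+1}(y):=\exists x\,\bigl((y,x)\in\Gamma\wedge\alpha_k(x)\bigr).
\]
Then $\alpha_q$ contains only the two variables $x,y$ and has total degree $2q+1$. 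Yet it defines $\{x: f^{\circ q}(x)=0\}$, which consists of $2^q$ points because $f^{\circ q}(2\cos\theta)=2\cos(2^q\theta)$. In any sharply o-minimal structure, membership of this set in $\mathcal{S}_{O(1),\poly(q)}$ would contradict (O).

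So you must state explicitly that $n$ counts variables after standardizing bound variables apart: no variable is quantified twice or occurs both free and bound. This convention is exactly what bounds the number of quantifiers by $n-m$, and under it prenex conversion needs no renaming.

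With the convention in place, the quantifier step is also simpler than yours. Write $\psi=Q_1y_1\cdots Q_{n-m}y_{n-m}\,\theta$ and use the coordinate order $(x_1,\dots,x_m,y_1,\dots,y_{n-m})$. After building $S_\theta\subset\rReal^n$, each quantifier, innermost first, is a projection of the last coordinate, conjugated by complements for $\forall$. By (\s2) this costs nothing in format or degree. No re-cylindering or $\pi_j^{-1}\pi_j$ is needed, and your ``keep all $n$ coordinates and project only at the end'' is inconsistent with the latter anyway.

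Your accounting also needs two corrections, neither of which breaks the result:
\begin{itemize}
\item In your version, moving $x_j$ to the last slot via graph-of-permutation sets costs $+n$ in format and degree, not $+1$. This gives format $O(n^2+\max_i F_i)$, which is still $O_F(1)$.
\item The diagonal corrections for atoms cost $O(n)$ in degree per atom, not $O(n)$ in total. This is still $\poly_F(D)$, because (with $D_i\geq 1$) there are at most $D$ atoms.
\end{itemize}
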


\begin{example}
	Let $\qty{\FD[\format][\degree]}$ be a sharply generated FD-filtration, let ${f:A\to B}$ and ${g:B\to C}$ be functions in $\FD[\format][\degree]$ and let ${B'\subset B}$ be in $\FD[\format][\degree]$.
	Then $A$, $f(A)$, $g\comp f$ and $f^{-1}(B')$ are all in $\FD*[\format][\degree]$.
\end{example}

The following notion of reduction between FD-filtrations is introduced in~\cite{BinyaminiNovikovZak2022} (for brevity, we suppress from our asymptotic notation the dependence on the relevant structures and FD-filtrations).
\begin{definition}\label{def: reduction}
	Let $\{\FD[\format][\degree][(1)]\},\{\FD[\format][\degree][(2)]\}$ be two FD-filtrations on structures $\structure^{(1)}$ and $\structure^{(2)}$, respectively.
	We say that $\{\FD[\format][\degree][(1)]\}$ is \emph{reducible} to $\{\FD[\format][\degree][(2)]\}$ if we have the inclusion $\FD[\format][\degree][(1)]\subset \FD*[\format][\degree][(2)]$ for all $F,D\in\nNatural$.

	If $\{\FD[\format][\degree][(1)]\}$ and $\{\FD[\format][\degree][(2)]\}$ are reducible to each other, we say they are \emph{equivalent}.
	In particular, the corresponding structures $\structure^{(1)}$ and $\structure^{(2)}$ are equal as collections of sets.
\end{definition}
\begin{remark}\label{rem: reductions}
The idea motivating \Cref{def: reduction} is that any bound of the form $\polyfd$ on some quantity with respect to the format and degree of a set in the filtration $\{\FD[\format][\degree][(2)]\}$ implies a bound of the same form with respect to the format and degree of the same set in the filtration $\{\FD[\format][\degree][(1)]\}$.
Similar implications hold for the other types of dependence on $\format$ and $\degree$ which appear in the statements of the theorems in this paper.

We will also use this idea in the following way.
During the proofs of the main theorems we will often replace sets and functions under discussion with ones satisfying additional assumptions, at the cost of increasing their complexity.
This may be done without loss of generality since, using \Cref{prop: so minimality formula complexity}, one may check that in all these cases we replace sets in $\FD[\format][\degree]$ by ones in $\FD*[\format][\degree]$.
\end{remark}

By~\cite[Theorem 1.9]{BinyaminiNovikovZak2022}, one may reduce every \so-minimal structure to a \so-minimal structure satisfying the following additional property.
\begin{definition}
	\label{def: sharp CD}
	A structure $\qty{\FD[\format][\degree]}$ has \emph{sharp cell decomposition} (or \s CD, for short) if for every collection $S_1,\dots,S_k\subset\rReal^n$ of sets in $\FD[\format][\degree]$ there is a decomposition of $\rReal^n$ into $\polyfd[\format][\degree, k]$ cells (in the sense of o-minimality), each of which is in $\FD*[\format][\degree]$ and compatible with each $S_i$.
\end{definition}

\begin{remark}\label{rem: complexity of component}
	It follows from~\cite[Proposition 1.23]{BinyaminiNovikovZak2022} that a sharply generated FD-filtration $\qty{\FD[\format][\degree]}$ has \s CD if and only if it is \so-minimal and has the property that each connected component of a set in $\FD[\format][\degree]$ is in $\FD*[\format][\degree]$.	
\end{remark}

\subsection{Complex cells}
\label{sec: complex cells}
We now give the basic definitions and notions relating to complex cells, mostly following~\cite{BinyaminiNovikov2019}, and then state our main result (\Cref{thm:cpt}).
As in~\cite[Section 2.2.2]{BinyaminiNovikov2019}, one also has a notion of \emph{real complex cells}, determined by holomorphic functions which are real valued on the real parts of their domains.
Throughout the paper, we will focus on the general complex setting and postpone to \Cref{sec: real cells} the discussion of the real setting.
Application of the real versions of the main theorems to real analytic sets as in~\cite[Corollaries 34 and 35]{BinyaminiNovikov2019} requires some additional work which is carried out in~\cite{CarmonAnalyticallyGenerated}.
For differences from the definitions of~\cite{BinyaminiNovikov2019}, see \Cref{rem: complex cell changes}.

We fix from now on some \s o-minimal structure $\qty{\structure_{\format,\degree}}$ admitting \s CD (see \Cref{sec:so minimal} for these notions).

\subsubsection{Cells and cellular maps}
Complex cells are certain subsets of $\cComplex^\ell$, defined by induction on $\ell$.
We start with the case $\ell=1$.

\begin{definition}\label{def: 1 dim complex cell}
	A \emph{complex cell} $\cell\subset\cComplex$ is one of the following sets, defined with respect to \emph{radii} $r,r_1,r_2\in\cComplex\setminus\{0\}$.
	
	\vspace*{\topsep}
	\newcolumntype{M}{r>{\!\!\!\!\!}}
	\begin{tabular}{@{$\quad\ \bullet\ \,$}l M l}
		A point:& $\point$&${}=\qty{0}$,\\
		A disc:& $\disc[r]$&${}=\qty{z\in\cComplex\st\abs{z}<\abs{r}}$,\\
		A punctured disc:& $\puncDisc[r]$&${}=\qty{z\in\cComplex\st 0<\abs{z}<\abs{r}}$,\\
		A disc complement:& $\puncDisc*[r]$&${}=\qty{z\in\cComplex\st 0< \abs{r}<\abs{z}}$,\\
		An annulus:& $\annulus[r_1][r_2]$&${}=\qty{z\in\cComplex\st 0<\abs{r_1}<\abs{z}<\abs{r_2}}$.
	\end{tabular}
	\vspace*{\topsep}\\
	The \emph{type} of $\cell$ is the symbol $\point$, $\disc$, $\puncDisc$, $\puncDisc*$ or $\annulus$, respectively.
\end{definition}
When using this notation, we will always implicitly assume that $r,r_1,r_2$ are non-zero and, in the case of annulus, that $\abs{r_1}<\abs{r_2}$.
Thus these sets are non-empty.

In order to define complex cells in $\cComplex^\ell$ for $\ell>1$, we use the following notation.
\begin{definition}[$\odot$-product]
	Let $X$ be a set and let $Y$ be a set-valued map defined on $X$.
	Then
	\begin{equation}
		X\odot Y = \qty{(x,y)\st x\in X,\  y\in Y(x)}.
	\end{equation}
\end{definition}
If the map $Y$ in the definition above is constant, then $X\odot Y = X\times Y(x_0)$ for any $x_0\in X$.

We extend the notation of \Cref{def: 1 dim complex cell} to the following set-valued functions.
\begin{definition}
	\label{def: fiber as map}
	Let $U\subset\cComplex^\ell$ and $r,r_1,r_2:U\to\cComplex\setminus\{0\}$.
	We write $\disc(r)$ for the map $U\to 2^\cComplex$ given by
	\begin{equation}
		\label{eq: moving fiber example}
		\disc(r)(x)=\qty{z\in\cComplex\st \abs{z}<\abs{r(x)}}\qc x\in U.
	\end{equation}
	We define $\puncDisc[r]$, $\puncDisc*[r]$ and $\annulus[r_1][r_2]$ analogously.
	We consider $\point$ as the constant map of value $\{0\}$.
	The \emph{type} of such a map is the same as that of its values.
\end{definition}
When using this notation, we will always implicitly assume that the maps $r,r_1,r_2$ are nowhere-vanishing on $U$ and, in the case of an annulus, that $\abs{r_1(x)}<\abs{r_2(x)}$ for all $x\in U$.
Thus the set given in \eqref{eq: moving fiber example} is non-empty for all $x\in U$.

We now define a complex cell in $\cComplex^\ell$, along with its type and length, in the following inductive manner.
\begin{definition}[Complex cell]
	A complex cell $\cell\subset\cComplex^0$ is equal to $\cComplex^0$, its type is the empty word and its length is $0$.

	Given a complex cell $\cell\subset\cComplex^\ell$, holomorphic functions $r,r_1,r_2:\cell\to\cComplex$ and $\cell[F]\in\qty{\point,\disc[r],\puncDisc[r],\puncDisc*[r],\annulus[r_1][r_2]}$, the set $\cell\odot\cell[F]$ is a complex cell in $\cComplex^{\ell+1}$.
	
	We call $\cell$ the \emph{base} of $\cell\odot\cell[F]$ and $\cell[F]$ its \emph{fiber}.
	The functions $r$ or $r_1,r_2$ are the \emph{radii} of $\cell[F]$.
	The type of $\cell\odot\cell[F]$ is given by the concatenation of the type of $\cell$ with the type of $\cell[F]$.
	The \emph{length} of a complex cell $\cell\subset\cComplex^\ell$ is $\ell$.
	The \emph{dimension} $\dim\cell$ of a complex cell $\cell$ of positive length is its dimension as a smooth complex manifold, which is equal to the difference between its length and the number of appearances of $\point$ in its type.
\end{definition}
When the type of a complex cell $\cell$ contains only discs, we will call $\cell$ a \emph{cellular polydisc}.

\begin{definition}
	\label{def: odot id}
	Let $\widehat\cell$ and $\cell\odot\cell[F]$ be complex cells of lengths $\ell$ and $\ell+1$, respectively, and let $f:\widehat\cell\to\cell$ be holomorphic.
	Then the pullback $\widehat\cell\odot\pullback{f}\cell[F]$ is the complex cell given by $\qty{(\varZ,w)\in\cComplex^{\ell+1}\st\varZ\in\widehat\cell,\ w\in\cell[F](f(\varZ))}$.
	In this case, we denote by $f\odot\idmap:\widehat\cell\odot\pullback{f}\cell[F]\to \cell\odot\cell[F]$  the map given by $(f\odot\idmap)(\varZ,w)=(f(\varZ),w)$.
\end{definition}

We will frequently use the following notation.
\begin{definition}
	\label{def: initial and fiber notation}
	Let $\varZ=(z_1,\dots,z_\ell)\in\cComplex^\ell$ and let $1\leq i \leq  j\leq \ell$.
	Then $\initial{\varZ}{i}{j}=(z_i,\dots,z_j)$.
	If $S\subset\cComplex^\ell$ then $\initial{S}{1}{k}=\qty{\initial{\varZ}{1}{k}\st \varZ\in S}$ is the projection of $S$ to the first $k$ coordinates.
	For any $\varW\in\initial{S}{1}{k}$, we denote the fiber of $S$ over $\varW$ by $S_\varW=\qty{\varZ\in S\st \initial{\varZ}{1}{k}=\varW}$. 
	If $S=\cell$ is a complex cell, we may identify $\cell_{\varW}$ with the complex cell of length $\ell-k$ obtained by omitting its first $k$ coordinates.
\end{definition}

We now define the notion of a cellular map between complex cells.
\begin{definition}[Cellular map]
	\label{def: cellular map}
	Let $\cell$ be a complex cell of length $\ell$.
	Let $f:\cell\to\cComplex^\ell$ be holomorphic with components $f=(f_1,\dots,f_\ell)$.
	The map $f$ is called \emph{cellular} if, for all $j=1,\dots,\ell$, we have that
	\begin{enumerate}
		\item The component $f_j$ depends only on the first $j$ coordinates, i.e.\ $f_j(\initial{\varZ}{1}{\ell})=f_j(\initial{\varZ}{1}{j})$ for all $\varZ\in\cell$;
		\item If the $j$-th coordinate of $\cell$ is not of type $\point$, then the derivative $\pdv{z_j}f_j$ does not vanish anywhere on $\cell$.
	\end{enumerate}
\end{definition}
The following consequences of this definition are straightforward to verify:
The composition of cellular maps is a cellular map;
Cellular maps preserve dimension;
If $f:\cell\to\cComplex^\ell$ is a cellular map, $\varZ\in\cell$ and $1\leq k\leq \ell$, then the restriction to the fibers $f:\cell_{\initial{\varZ}{1}{k}}\to\cComplex^\ell_{\initial{f(\varZ)}{1}{k}}$ is a cellular map.
\begin{definition}
	\label{def: prepared map}
	A cellular map $f:\cell\to\cComplex^\ell$ is \emph{prepared} in the $j$-th variable if $f_j$ is of the from
	\begin{equation}
		f_j(\varZ)=\pm z_j^{q_j}+g_j(\initial{\varZ}{1}{j-1})\qc \varZ\in\cell,
	\end{equation}
	where $q_j\in\zIntegers_{\neq 0}$ and $g_j:\initial{\cell}{1}{j-1}\to\cComplex$ is holomorphic.
	We call $g_j$ the \emph{center} of the prepared map in the $j$-th variable.
	When $q_j= 1$, we say that $f$ is a \emph{translate} in the $j$-th variable.
	When $g_j=0$, we say that $f$ is a \emph{power map} in the $j$-th variable.
	If $f$ is prepared (resp.\ a translate, a power map) in all variables, then we simply say that it is prepared (resp.\ a translate, a power map).
\end{definition}

\begin{remark}
	\label{rem: composing prepared maps}
	We note that the composition of two prepared maps is not necessarily prepared, but the composition of two cellular maps which are translates (resp.\ power maps) in the $j$-th variable is a translate (resp.\ power map) in the $j$-th variable.

	Likewise, precomposing a map which is prepared in the $j$-th variable with a power map in the $j$-th variable or precomposing a translate in the $j$-th variable with a map which is prepared in the $j$-th variable yields a map which is prepared in the $j$-th variable.
\end{remark}

\begin{remark}[Complexity of a complex cell]
	\label{rem: cell complexity}
	Throughout this paper, we will implicitly use the following convention when referring to the complexity of complex cells and of cellular maps (cf.~\cite[Section 2.2.3]{BinyaminiNovikov2019}).
	
	Let $\cell,\widehat\cell$ be complex cells of length $\ell$ and let $f:\cell\to\widehat\cell$ be a cellular map.
	All statements of the form ``$\cell\in\FD[\format][\degree]$'' should be interpreted as shorthand for ``The cell $\cell=\cell[F]_1\odot\cdots\odot\cell[F]_\ell$ as a subset of $\rReal^{2\ell}$, as well as the graphs of the radii of all fibers $\cell[F]_j$, are in $\FD[\format][\degree]$''.
	
	Similarly (and following the above convention with respect to $\cell,\widehat\cell$), all statements of the form ``$f\in\FD[\format][\degree]$'', for $f$ as above, should be interpreted as shorthand for ``The graph of $f$, as well as the complex cells $\cell,\widehat\cell$, are in $\FD[\format][\degree]$''.

	The reason for this convention is that it is not clear how to definably recover the graphs of the relevant radii functions (that is, their arguments and not just their absolute values) from the cell $\cell$ considered as a set.
	In~\cite{BinyaminiNovikov2019} and in this paper, complex cells are always constructed by constructing their radii functions.
	Thus one should consider these radii functions as part of the data constituting a complex cell.
\end{remark}

\subsubsection{Extensions and cellular covers}
A complex cell is always considered as a subset of some larger complex cell, called its extension, and defined inductively as follows.
\begin{definition}
	Let $0<\delta<1$.
	Any complex cell $\cell\subset\cComplex$ admits a \emph{$\delta$-extension} $\ext{\cell}{\delta}$, which is given by
	\begin{align}
		\begin{split}
		\ext{\point}{\delta}&=\point,\\
		\ext{\disc[r]}{\delta}&=\disc[r/\delta],\\
		\ext{\puncDisc[r]}{\delta}&=\puncDisc[r/\delta],\\
		\ext{\puncDisc*[r]}{\delta}&=\puncDisc*[\delta r],\\
		\ext{\annulus[r_1][r_2]}{\delta}&=\annulus[\delta r_1][r_2/\delta].
		\end{split}
	\end{align}
	We call $\delta$ the \emph{extension parameter} of the extension $\ext{\cell}{\delta}$.
\end{definition}
\begin{definition}\label{def: delta extension}
	Let $(\delta_1,\dots,\delta_{\ell+1})\in(0,1)^{\ell+1}$.
	A complex cell $\cell\odot\cell[F]$ of length $\ell+1$ is said to admit a $(\delta_1,\dots,\delta_{\ell+1})$-extension if both of the following conditions are met:
	\begin{enumerate}
		\item The base $\cell$ admits a $(\delta_1,\dots,\delta_{\ell})$-extension;
		\item The radii defining the fiber $\cell[F]$ analytically continue to holomorphic functions on $\ext{\cell}{(\delta_1,\dots,\delta_{\ell})}$ such that $\cell[F]$ remains well-defined.
	\end{enumerate}
	By this last condition we mean that the radii defining $\cell[F]$ are non-zero for all $\varZ\in\ext{\cell}{(\delta_1,\dots,\delta_{\ell})}$ and, in the case of an annulus, that the outer radius has larger absolute value than the inner radius for all $\varZ\in\ext{\cell}{(\delta_1,\dots,\delta_{\ell})}$.
	In this case, we set
	\begin{equation}
		\ext*{\cell\odot\cell[F]}{(\delta_1,\dots,\delta_{\ell+1})}=\ext{\cell}{(\delta_1,\dots,\delta_{\ell})}\odot\ext{\cell[F]}{\delta_{\ell+1}}.
	\end{equation} 

	Finally, for $0<\delta<1$, we abbreviate $\ext*{\cell\odot\cell[F]}{(\delta,\dots,\delta)}$ as $\ext*{\cell\odot\cell[F]}{\delta}$.
\end{definition}

We will also use the same extension notation for Euclidean balls, as follows.
\begin{definition}
	Let $\ball[\varZ][r]=\qty{\varW\in\cComplex^\ell\st\abs{\varW-\varZ}<r}$ be a Euclidean ball.
	We define its $\delta$-extension to be $\ext{\ball[\varZ][r]}{\delta}=\ball[\varZ][r/\delta]$. 
\end{definition}
For balls and cellular polydiscs, we will use this extension notation also in the case where $\delta\geq 1$.

\begin{definition}
	\label{def: weierstrass cell}
	Let $0<\gamma<1$, let $\cell\odot\ext{\cell[F]}{\gamma}$ be a complex cell where $\cell[F]\neq \point$ and let $Z\subset\cell\odot\ext{\cell[F]}{\gamma}$. 
	We say that $\cell\odot\ext{\cell[F]}{\gamma}$ is a \emph{Weierstrass cell} for $Z$ with \emph{gap} $\gamma$ if we have
	\begin{equation}
		Z\cap (\cell\odot(\ext{\cell[F]}{\gamma}\setminus\cell[F]))=\emptyset.	
	\end{equation}
\end{definition}
\begin{remark}
	If $\cell\odot\ext{\cell[F]}{\gamma}$ is a Weierstrass cell for $Z\subset\cell\odot\ext{\cell[F]}{\gamma}$ and $\cell[F]=\disc,\annulus$, then $\closure{Z}$ does not intersect $\cell\odot\boundary(\ext{\cell[F]}{\gamma})$.
	However, for $\cell[F]=\puncDisc$, we may have that $\closure{Z}$ intersects $\cell\odot\point$.
\end{remark}

The hyperbolic geometry of a complex cell inside its extension plays a central role in the constructions of~\cite{BinyaminiNovikov2019} and of this paper.
Our main results are therefore formulated in terms of the following alternative definition of an extension, which is better adapted to the hyperbolic viewpoint.
\begin{definition}
	\label{def: rho extension}
	Let $\rho >0$.
	The $\hyperbolicParameter{\rho}$-extension $\hExt{\cell}{\rho}$ of a complex cell $\cell\subset\cComplex$ is the extension $\ext{\cell}{\delta}$, where $\delta$ and $\rho$ are related by
	\begin{alignat}{3}
		\quad\quad\ \,\rho &= \frac{2\pi\delta}{1-\delta^2},&\quad\quad \delta&=\frac{\sqrt{\pi^2+\rho^2}-\pi}{\rho}&&\quad\quad \text{if $\cell=\disc$},\\
		\rho &= \frac{\pi^2}{2\abs{\log\delta}},&\quad\quad \delta&=e^{-\pi^2/2\rho}&&\quad\quad\text{if $\cell=\puncDisc,\puncDisc*,\annulus$}.
	\end{alignat}
	We call $\rho$ the \emph{hyperbolic extension parameter} of the extension $\hExt{\cell}{\rho}$.
	We will also write $\hyperbolicParameter{\rho}$ for the corresponding (Euclidean) extension parameter $\delta$.
	Extensions $\hExt{\cell}{\rho_1,\dots,\rho_\ell}$ and $\hExt{\cell}{\rho}$ of a complex cell $\cell\subset\cComplex^\ell$ are defined as in \Cref{def: delta extension}.
\end{definition}
This normalization is chosen such that, in the one dimensional case, the boundary components of $\cell$ have length at most $\rho$ in the hyperbolic metric of $\hExt{\cell}{\rho}$ (see~\cite[Fact 6]{BinyaminiNovikov2019}).

\begin{remark}
	We note that, for both $\delta$-extensions and $\hyperbolicParameter{\rho}$-extensions, a smaller extension parameter corresponds to a larger extension.
\end{remark}

When using the notation of \Cref{def: delta extension,def: rho extension}, we will always implicitly assume that reference to a complex cell $\ext{\cell}{\delta}$ implies that  $\cell$ is a complex cell admitting a $\delta$-extension, and similarly for $\hyperbolicParameter{\rho}$-extensions.

\begin{definition}[Cellular cover]
	A finite collection of cellular maps $\{\phi_j:\ext{\cell_j}{\delta'}\to\ext{\cell}{\delta}\}$ is called a \emph{cellular cover} if $\cell\subset\bigcup_j \phi_j(\cell_j)$.
\end{definition}
Cellular covers $\{\phi_j:\ext{\cell_j}{\delta'}\to\ext{\cell}{\delta}\}$ and $\{\varphi_{jk}:\ext{\cell_{jk}}{\delta''}\to\ext{\cell_j}{\delta'}\}$ may be composed to yield a cellular cover $\{\phi_j\comp\varphi_{jk}:\ext{\cell_{jk}}{\delta''}\to\ext{\cell}{\delta}\}$.

Let $\ext{\cell}{\delta}$ be a complex cell and let $0<\delta'<1$.
If $\delta'>\delta$, then $\cell$ also admits a $\delta'$-extension, essentially by ``forgetting'' the larger extension corresponding to $\delta$.
If $\delta'<\delta$, then one may use the following sharp version of the refinement theorem \cite[Theorem 9]{BinyaminiNovikov2019} to move to a cell of extension $\delta'$.

\begin{theorem}[Sharp refinement theorem]\label{thm:sharp refinement}
	Let $0 < \sigma < \rho$ and let $\hExt{\cell}{\rho}$ be a complex cell of length $\ell$ in $\FD[\format][\degree]$.
	Then there exists a cellular cover $\{f_j:\hExt{\cell_j}{\sigma}\to\hExt{\cell}{\rho}\}$ of size $\polyfd[\ell][\rho, 1/\sigma]$ such that each $f_j$ is a translate map and is in~$\FD*[\format*][\degree]$.
\end{theorem}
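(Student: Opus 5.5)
We plan to follow the proof of the refinement theorem \cite[Theorem 9]{BinyaminiNovikov2019}, arguing by induction on the length $\ell$ and keeping track of the number of cells and of the format and degree at each step. For $\ell=0$ there is nothing to prove. Write $\cell=\cell_1\odot\cell[F]$ with $\cell_1$ of length $\ell-1$. By induction we obtain a cellular cover $\{g_i:\hExt{\cell_{1,i}}{\sigma'}\to\hExt{\cell_1}{\rho}\}$ by translate maps in $\FD*[\format][\degree]$. Its size is $\poly_\ell(\rho,1/\sigma')$, where $\sigma'\leq\sigma$ is chosen below. Pulling back the fiber gives cells $\hExt{\cell_{1,i}}{\sigma'}\odot\pullback{g_i}\hExt{\cell[F]}{\rho}$, and $g_i\odot\idmap$ is a translate map. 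Its graph is obtained from those of $g_i$ and of the radii by a first order formula of bounded size, so by \Cref{prop: so minimality formula complexity} it stays in $\FD*[\format][\degree]$. It then remains to refine the fiber.

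The fiber refinement is one-dimensional and parametric. For $\cell[F]=\point$ there is nothing to do. For $\cell[F]=\disc[r]$ we cover $\disc[r]$ by $\poly(\rho,1/\sigma)$ discs $\disc(\varepsilon r)$ with centers $c_k r$. Here $c_k$ and $\varepsilon$ are constants depending only on $\rho,\sigma$, chosen so that $\disc(\varepsilon r/\hyperbolicParameter{\sigma})+c_kr\subset\disc(r/\hyperbolicParameter{\rho})$. This is possible because $\disc[r]$ has hyperbolic diameter $O(\log(1/\sigma))$-controlled size inside its $\rho$-extension. The maps are $(\varZ,w)\mapsto(\varZ,w+c_kr(\varZ))$, which are translates in the last variable. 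Their radius $\varepsilon r$ and centers $c_k r$ are polynomial expressions in $r$, so they remain in $\FD*[\format][\degree]$.

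For $\puncDisc,\puncDisc*,\annulus$ we pass to logarithmic coordinates. The hyperbolic length of the core geodesic is $\asymp\rho^{-1}$-related, and the annulus $\annulus[\delta r_1][r_2/\delta]$ with $\delta=\hyperbolicParameter{\rho}$ must be covered by a $\hyperbolicParameter{\sigma}$-extended annulus near the boundary together with discs covering the middle part. Explicitly, we keep the two collar annuli $\annulus(r_1,\lambda r_1)$ and $\annulus(\lambda^{-1}r_2,r_2)$ with identity maps. Here $\lambda$ is chosen so that the $\hyperbolicParameter{\sigma}$-extensions $\annulus(\hyperbolicParameter{\sigma}r_1,\lambda r_1/\hyperbolicParameter{\sigma})$ stay inside the $\rho$-extension, which requires $\hyperbolicParameter{\sigma}\geq\hyperbolicParameter{\rho}$-type inequalities and is where $\sigma<\rho$ is used. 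The middle region, if nonempty, is covered by annuli $\annulus(\mu^k r_1,\mu^{k+1}r_1)$ and by a remaining annulus whose modulus is large compared with its $\sigma$-extension margin. When the modulus $|r_2/r_1|$ varies with $\varZ$, the number of such annuli is not uniform. We therefore split the base according to the definable sets $\{\log|r_2/r_1|\in I\}$, which keeps the count bounded by $\poly(\rho,1/\sigma)$.

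\textbf{The main obstacle} is this base splitting. A subset of a cell defined by an inequality on $|r_2/r_1|$ is not itself a complex cell. To avoid this we use the fact that, after refining the base with extension $\sigma'$ much smaller than $\sigma$, the holomorphic nonvanishing function $r_2/r_1$ has bounded logarithmic oscillation on each base cell. This follows from the Schwarz--Pick lemma for maps into $\cComplex\setminus\{0\}$ together with the fact that $\cell_{1,i}$ has hyperbolic diameter $O_\ell(1/\sigma')$ in its extension. The modulus is then essentially constant on each cell, up to a bounded factor, so a single choice of annuli works uniformly over each base cell. This is why we take $\sigma'=\poly_\ell(\sigma)$, which preserves the polynomial count.

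Finally, all constructed radii are of the form $c\cdot r$ or $c\cdot r_j$ with constants $c$. Compositions of the translate maps $g_i\odot\idmap$ with the fiber translates are again translates by \Cref{rem: composing prepared maps}, and the definability bounds follow from \Cref{prop: so minimality formula complexity} and \Cref{rem: reductions}.
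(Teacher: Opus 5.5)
Your base induction and your disc case are fine. The annulus step, which is the real content of the theorem, has two genuine gaps. The same problems affect $D_\circ$ and $D_\infty$.

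\textbf{The collar annuli cannot work.} The annuli $A(r_1,\lambda r_1)$ and $A(\lambda^{-1}r_2,r_2)$ with identity maps fail for every $\lambda$. Their $\{\sigma\}$-extensions have inner radius $\{\sigma\}|r_1|$, resp.\ outer radius $|r_2|/\{\sigma\}$. Since $\rho\mapsto\{\rho\}=e^{-\pi^2/2\rho}$ is increasing, $\sigma<\rho$ gives $\{\sigma\}<\{\rho\}$, so these extensions leave $A(\{\rho\}r_1,r_2/\{\rho\})$. Your remark that this ``requires $\{\sigma\}\geq\{\rho\}$ \ldots\ and is where $\sigma<\rho$ is used'' has the inequality backwards. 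More generally, no cell attached with centre $0$ can contain points arbitrarily close to a boundary circle of the fiber. So the collars $\{|r_1|<|w|\le\Lambda|r_1|\}$ and $\{\Lambda^{-1}|r_2|\le|w|<|r_2|\}$, with $\Lambda=\{\rho\}/\{\sigma\}$, must be covered by translated discs. These are centred at $c_kr_1$ (resp.\ $c_kr_2$), with radii proportional to $|c_kr_1|$ (resp.\ $|c_kr_2|$), as in your disc case. Only the central piece $A(\Lambda r_1,\Lambda^{-1}r_2)$ (resp.\ $D_\circ(\Lambda^{-1}r)$, $D_\infty(\Lambda r)$) is kept with centre $0$; your dyadic annuli are unnecessary. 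The count stays polynomial only because of the two normalizations. For discs $\{\sigma\}\approx\sigma/2\pi$, so these discs have logarithmic radius $\gtrsim\min(1,\sigma)\min(1,1/\rho)$. Each collar has logarithmic width below $\pi^2/2\sigma$.

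\textbf{The uniformity claim is false.} $\mathbb{C}\setminus\{0\}$ carries no hyperbolic metric, so Schwarz--Pick says nothing about maps into it. Nor do cells have bounded hyperbolic diameter in their extensions: a $D_\circ$ coordinate has a cusp, and long annuli have diameter growing with the modulus. For a counterexample, take the base $D_\circ(1)$ with $r_1=\{\rho\}z/2$ and $r_2=1$. Any cover of this base by translates must contain a punctured disc centred at $0$, on which $|r_2/r_1|$ is unbounded, for every $\sigma'$.

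What does hold is a dichotomy. Note that $r_1/r_2$ maps the extended base into $D_\circ(1)$. Apply the fundamental lemma for $D_\circ$ (cf.\ \cite[Lemmas 20--22]{BinyaminiNovikov2019}, same skeleton argument as in \Cref{sec:fundamental lemma}) to the $\{\sigma\}$-extension of each refined base cell inside its $\{\sigma'\}$-extension, with $\sigma'$ polynomially small in $\sigma$ as you propose. Then one of two cases holds:
\begin{enumerate}
\item $\log|r_2/r_1|$ varies by at most a factor $2$ there. In this case the $r_1$-anchored and $r_2$-anchored disc families, plus the central annulus when the modulus exceeds $2\log\Lambda$, cover uniformly.
\item $|r_2/r_1|>\Lambda^2$ throughout. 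In this case $A(\Lambda r_1,\Lambda^{-1}r_2)$ is a well-defined cell over the whole extended base.
\end{enumerate}
With these two repairs your outline goes through. The paper itself simply invokes \cite[Section~6.1]{BinyaminiNovikov2019}. Its only added note is that $D_\infty$ fibers should be treated like annuli with constant radii rather than by inversion, so that the maps remain translates.
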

\begin{proof}
	The proof of the refinement theorem in~\cite[Section 6.1]{BinyaminiNovikov2019} extends directly to the \so-minimal setting.

	Fibers of type $\puncDisc*$ are treated similarly to annuli of constant radii, rather than by inverting and reducing to the $\puncDisc$ case.
	This is needed in order to obtain cellular maps which are translates.
\end{proof}

\begin{remark}
	\label{rem: extension complexity}
	We note that if $\cell\in\FD[\format][\degree]$ is a complex cell admitting an extension $\ext{\cell}{\delta}$ for some $0<\delta<1$, one cannot, in general, bound the complexity of $\ext{\cell}{\delta}$ independently of $\delta$.
	For example, consider the cell $\cell=\disc[1]\odot\disc[\exp(\exp(z_1))]$ (in the structure $\RrPfaff$, say).
	This cell admits a $\delta$-extension for any value of $0<\delta<1$.
	However, the intersection of $\{z_2=e\}$ and $\ext{\cell}{\delta}$ is of size $\order*{\delta^{-1}}$. 

	In comparison, the cells constructed in \Cref{thm:sharp refinement} above are obtained by restricting given holomorphic functions to smaller cells admitting a larger extension, and do not correspond to analytically continuing the radii functions defining a given cell.
\end{remark}

\begin{remark}[Normalized cells and inversions]
	\label{rem: normalized cells}
	Let $\cell\in\FD[\format][\degree]$ be a complex cell of length $\ell$.
	We may normalize the coordinates of the cell by a biholomorphic map in the following way, for all $1\leq j\leq \ell$:
	\begin{itemize}
		\item If the $j$-th coordinate of $\cell$ is of type $\disc$, $\puncDisc$ or $\puncDisc*$, then its radius is the constant function $1$.
		\item If the $j$-th coordinate of $\cell$ is of type $\annulus$, then its outer radius is the constant function $1$.
	\end{itemize}
	In the case of annulus, we may also normalize its inner radius instead of the outer radius (note that we can not, in general, fix both of these radii to be constants since the logarithmic width $\abs{\log\abs{r_2}-\log\abs{r_1}}$ is a conformal invariant of the annulus $\annulus[r_1][r_2]$).

	In a similar manner, one may apply an inversion in a coordinate of type $\annulus$ in order to exchange between the inner and outer boundary components of the corresponding annulus.
	Inversion may also be used to move between coordinates of type $\puncDisc$ and $\puncDisc*$.

	These transformations are all in $\FD*[\format][\degree]$ and are equivariant with respect to taking extensions.
\end{remark}

\begin{remark}
	\label{rem: complex cell changes}
	Our exposition of complex cells has mostly followed~\cite{BinyaminiNovikov2019}, up to some small changes in terminology and up to the following additional differences.

	First, we introduce the fiber type $\puncDisc*$ to allow for unbounded cells.
	We note that fibers of this type appear implicitly in the clustering constructions of~\cite[Section 6.3]{BinyaminiNovikov2019}.
	Making these explicit will be useful in~\cite{CarmonAnalyticallyGenerated}, where we obtain preparation theorems as described in~\cite[Section 4.2]{BinyaminiNovikov2019} for functions with possibly unbounded domains.
	Working with these unbounded domains is needed for our adaptation of the methods of~\cite{vdDriesSpeisseger2002} in~\cite{BinyaminiCarmonNovikovLE}.

	Second, the definition of a cellular map (\Cref{def: cellular map}) is similar to the less restrictive version used in~\cite{Binyamini2024}.
	This is needed in order to resolve a minor technicality in the proof of the CPrT \cite[Theorem 7]{BinyaminiNovikov2019} (cf.\ \Cref{sec: CPrT}). 
	
	Lastly, the definition of a prepared map (\Cref{def: prepared map}) now allows for a sign (as in~\cite{Shankar2025}) and for a negative exponent.
	We note that definability assumptions on the holomorphic functions appearing in the statements throughout this paper ensure that one cannot encounter, e.g., an essential singularity around the origin of a punctured disc or at infinity for a disc complement.
\end{remark}

\begin{remark}
	\label{rem: complex basic fiber}
	It will occasionally be convenient to work with sets of the form $\cell\odot\cComplex^k\odot\cell[F]$, where $\cell\odot\cell[F]$ is a complex cell and $k\in\nNatural$.
	In all such cases, we will consider this notation as shorthand for the union of $2^k$ complex cells corresponding to the (non-disjoint) decomposition $\cComplex=\disc[2]\cup\puncDisc*[1]$, where the fiber $\cell[F]$ depends only on the coordinates of $\cell$.
	We note that if $\cell\odot\cell[F]$ admits a $\delta$-extension, then so do each of the cells corresponding to $\cell\odot\cComplex^k\odot\cell[F]$.
\end{remark}

\begin{remark}
	Let $\cell\in\FD[\format][\degree]$ be a complex cell of length $\ell$.
	Since $\format$ is at least $2\ell$, any bound of the form, say, $\polyfld$ may be weakened to a bound of the form $\polyfd$.
	We will thus occasionally suppress such dependence on $\ell$ from the notation when it is not needed.	
	We do, however, retain this explicit dependence on $\ell$ in the statements of some theorems since this is important for their proofs, which proceed by induction on $\ell$.
\end{remark}

\subsubsection{Sharp cellular parameterization}
Our main result in the complex setting, \Cref{thm:cpt} below, is a sharp version of the cellular parameterization theorem (CPT)~\cite[Theorem 8]{BinyaminiNovikov2019}.
We first require the following two definitions.

\begin{definition}
	A map $f:X\to Y$ is \emph{compatible} with a subset $Z\subset Y$ if $f(X)$ is either contained in $Z$ or disjoint from $Z$.
\end{definition}

\begin{definition}
	Let $U\subset\cComplex^\ell$ be a domain.
	An \emph{analytic subset} $Z\subset U$ is a relatively closed subset that is locally given by the common zero-set of finitely many holomorphic functions.

	If, in addition, $Z$ is of complex codimension $1$, then we say it is an \emph{analytic hypersurface}.
\end{definition}
By~\cite[Proposition 2.6, Theorem 2.8]{Chirka1989}, an analytic subset is a hypersurface if and only if it is locally given by the zero-set of a single holomorphic function which does not vanish identically.

\begin{theorem}[Sharp Cellular Parametrization Theorem, \s CPT]\label{thm:cpt}
	Let ${\rho,\sigma>0}$. 
	Let $\hExt{\cell}{\rho}$ be a cell of length $\ell$ and let $Z_1,\dots,Z_M\subset\hExt{\cell}{\rho}$ be analytic hypersurfaces such that $\hExt{\cell}{\rho}$ and $Z_1,\dots,Z_M$ are in $\FD[\format][\degree]$.
	Then there exists a cellular cover ${\{f_j:\hExt{\cell_j}{\sigma}\to\hExt{\cell}{\rho}\}}$ of size $\poly_{\format*}(M, \degree, \rho, 1/\sigma)$ such that each $f_j$ is prepared, compatible with each $Z_i$, and is in $\FD*[\format*][\degree]$.
\end{theorem}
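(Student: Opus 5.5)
We would follow the inductive scheme of~\cite{BinyaminiNovikov2019}, proving the \s CPT simultaneously with a sharp cellular preparation statement by induction on the length $\ell$. The first reduction is to a single hypersurface. We replace $Z_1,\dots,Z_M$ by their union $Z=\bigcup_i Z_i$, which lies in $\FD*[\format][M\degree]$ by (\s3). A prepared cover compatible with $Z$, refined by sets of the form $Z_i\cap Z$ restricted to the (at most codimension one) images, is then handled by applying the induction again on lower-dimensional cells. Alternatively, we can carry all $Z_i$ along; the dependence on $M$ stays polynomial either way. We also use \Cref{thm:sharp refinement} freely to trade extension parameters: whenever a construction produces cells with extension $\hyperbolicParameter{\rho'}$, we refine to $\hyperbolicParameter{\sigma}$ at cost $\polyfd[\ell][\rho',1/\sigma]$, using translates so that preparedness is preserved (\Cref{rem: composing prepared maps}).

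The key step is a sharp analog of the fundamental lemma~\cite[Lemma 22]{BinyaminiNovikov2019}. Given $\hExt{\cell}{\rho}\odot\cell[F]$ and $Z$ not containing the fibers, it covers the base by $\polyfd$ cellular maps so that, after pullback, the last coordinate becomes a Weierstrass cell for $Z$ with gap $\gamma$ bounded below polynomially. Concretely, we would proceed in four steps.
\begin{enumerate}
	\item Bound the number of points of $Z$ in each fiber $\cell[F](\varZ)$ by $\polyfd$. These fibers are zero-dimensional definable sets of bounded complexity, so \Cref{prop: so minimality connected component bound} applies.
	\item Use the hyperbolic geometry of the extended fiber (the boundary has length at most $\rho$) and a pigeonhole argument over $\polyfd$ candidate radii. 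This shows that for each base point there is a ``gap'' annulus or circle free of $Z$.
	\item Define the candidate radii definably, as functions of the base point. They have bounded complexity and are only piecewise holomorphic.
	\item Apply the inductive hypothesis (the \s CPT in length $\ell$) to the base. It is applied to the hypersurfaces where the choice of gap changes, together with the discriminant-type loci. This yields base cells on which a fixed choice of holomorphic radius works.
\end{enumerate}
The resulting radii must be holomorphic and definable without recourse to Laurent coefficients. To obtain them, we would take radii of the form $\varZ\mapsto c\cdot(\text{holomorphic elementary symmetric function of the roots})$, or simply constant multiples after the base has been prepared.

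With Weierstrass cells in hand, the fiber of $Z$ over each base point consists of $\nu=\polyfd$ points varying holomorphically. We cover the fiber by the standard cluster/cell decomposition (discs around roots, annuli, punctured discs, disc complements), as in~\cite[Section 6.3]{BinyaminiNovikov2019}. The roots are ramified over the base, so we first apply the inductive \s CPrT to the discriminant of the associated Weierstrass polynomial. This makes the discriminant a monomial times a unit on each base cell. We then pass to the $\nu!$-fold power-map cover in the base variables, which splits the roots into holomorphic, definable functions. The centers of the resulting clusters are these root functions (or their averages), and the fibers are of the forms above with radii given by differences of roots scaled by constants. These maps are translates and powers in the last variable, hence prepared, and their complexity is controlled by \Cref{prop: so minimality formula complexity}.

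The main obstacle I expect is the sharp fundamental lemma. The original proof passes through non-definable or non-effective analytic continuation arguments, and naive counts become exponential in the degree when one iterates over clusters. I would first try to keep the recursion depth bounded by $\ell$, with each level costing only a polynomial factor, and to control the gap by hyperbolic length estimates (Schwarz--Pick) rather than by Laurent expansions.
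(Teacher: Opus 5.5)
There is a genuine gap, and it sits in the step you yourself single out as the crux. Your outer scaffolding matches the paper: induction on $\ell$, refinement to normalize extension parameters, the clustering of~\cite{BinyaminiNovikov2019} once a Weierstrass cell is available, and the \s CPrT at the end. What your steps 1--4 do not deliver is a Weierstrass gap $1-1/\poly_{F}(D,M)$ that is valid \emph{uniformly} over a base cell. A gap for each individual base point is trivial by pigeonhole; the difficulty is uniformity.

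In step 4 you apply the inductive \s CPT to ``the hypersurfaces where the choice of gap changes''. For a candidate circle $\{\abs{w}=r_k\}$, this locus is $\projbase(Z\cap\{\abs{w}=r_k\})$, which has \emph{real} codimension one in the base. It is not complex analytic, so the \s CPT does not apply to it. In fact no finite cellular cover can be compatible with a nonempty real hypersurface: full-dimensional cells have open images, and lower-dimensional ones cannot cover it. A single radius can only work on a base cell that crosses such walls if you control how far $\abs{w}$ moves along each connected component of $Z$. Hyperbolic geometry of the fiber acts only fiberwise and gives no such control.

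The idea you are missing, which is the heart of the paper's Step 2, is to replace circles by complex points.
\begin{itemize}
\item For $p\in\epsilon\zIntegers^2\cap\annulus[1][6]$ and $\Gamma_p=\{z_{\ell+1}=p\}$, the sets $\projbase(Z_i\cap\Gamma_p)$ \emph{are} complex hypersurfaces of the base. So the inductive \s CPT reduces to lower dimension or to the case $\projfiber(Z_i)\cap\epsilon\zIntegers^2\cap\annulus[1][6]=\emptyset$.
\item One then needs holomorphic parametrizations of $Z$. The paper gets them by reducing to polydisc bases via the skeleton, rotating into Weierstrass polydiscs (\Cref{prop: Weierstrass polydiscs}), and applying $\text{\s CPT}_1$.
\item The fundamental lemma for $\cComplex\setminus\epsilon\zIntegers^2$ (\Cref{lem:fundamental lemma}) is applied to these parametrizations. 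This is Schwarz--Pick for the lattice complement, not for the fiber. It gives the radial bound of \Cref{prop: RB}: each of the $\poly$ many components of $Z$ has radial spread at most $\epsilon\cdot\poly_{F}(D)$.
\item With $\epsilon^{-1}=\poly_{F}(D,M)$, pigeonhole then yields one constant radius gap valid over the whole base.
\end{itemize}

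Several of your complexity choices also break the statement.
\begin{itemize}
\item \emph{Taking the union of the $Z_i$.} This puts you in degree $MD$, so your cells would have complexity depending on $M$. The theorem demands $\FD*[\format*][\degree]$ independently of $M$, and this is essential to the induction, not cosmetic. The $\epsilon$-net step calls the inductive statement with $M\cdot\poly(1/\epsilon)$ hypersurfaces, while $\epsilon$ must be chosen in terms of the complexity of the pulled-back sets. Any dependence of complexity on the number of hypersurfaces is therefore circular. The paper keeps the $Z_i$ separate and applies the base induction to pairwise discriminant loci $\{\mathcal{D}_{i,j}=0\}$. It then uses that each sheet of the resulting unramified cover lies in a single $Z_i$ and is a connected component of a bounded-complexity set; this is where \s CD enters.
\item \emph{Symmetric functions of the roots.} Discriminants and elementary symmetric functions of the $\nu=\poly(D)$ roots need $\nu$ quantified variables, so their \emph{format} grows with $D$. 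The paper avoids this by describing $\{\mathcal{D}_{i,j}=0\}$ as the set of base points over which the projection fails to be locally bijective near some preimage.
\item \emph{The $\nu!$-fold power map.} It multiplies the hyperbolic extension parameter by $\nu!$. Refining back to $\hyperbolicParameter{\sigma}$ via \Cref{thm:sharp refinement} then costs super-polynomially many cells. Only $\nu$-covers with $\nu$ polynomial in $D$ are affordable.
\end{itemize}
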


\begin{remark}\label{rem : no M in complexity}
	We note that, unlike in the statement of the CPT in~\cite{BinyaminiNovikov2019}, the complexity of the maps $f_j$ in \Cref{thm:cpt} does not depend on $M$.
	This is crucial for our inductive arguments  --- as part of the proof, we inductively apply the \s CPT with respect to a cell of smaller length and a large number of hypersurfaces.
	This is similar to the model-theoretic notion of \emph{distality} (see~\cite[Section 3.4]{Starchenko2017})
\end{remark}

The proof of the \s CPT is given in \Cref{sec: main theorems proofs}.
An outline of the proof is given in \Cref{sec: proof outline}.
The main obstacle in extending the proof of~\cite[Theorem 8]{BinyaminiNovikov2019} to the \so-minimal setting is reducing to the case of a Weierstrass cell $\hExt{\cell}{\rho}\odot\ext{\cell[F]}{\gamma}$ for $\bigcup Z_i$ with gap $\gamma=1-1/\polyfd[\format*][\degree,M]$ (see \Cref{def: weierstrass cell}).

As in~\cite{BinyaminiNovikov2019}, it is enough to first consider a slightly weaker version of the \s CPT where we do not assume that the cellular maps in the resulting cellular covers are prepared.
The stronger version, where these cellular maps are prepared, follows from the following sharp analog of~\cite[Theorem 7]{BinyaminiNovikov2019}\footnote{The dependence on $\sigma$ in the statement of \Cref{thm:sharp preparation} should also be included in the original statement of~\cite[Theorem 7]{BinyaminiNovikov2019}.
}.

\begin{theorem}[Sharp Cellular Preparation Theorem, \s CPrT]\label{thm:sharp preparation}
	Let ${\rho,\sigma>0}$.
	Let ${f:\hExt{\cell}{\rho}\to\cComplex^\ell}$ be a cellular map in $\FD[\format][\degree]$.
	Then there exists a cellular cover $\{g_j:\hExt{\cell_j}{\sigma}\to\hExt{\cell}{\rho}\}$ of size $\poly_{\format*}\qty(\degree,\rho,1/\sigma)$ such that each $g_j$ is in $\FD*[\format*][\degree]$ and each $f\comp g_j$ is prepared.
\end{theorem}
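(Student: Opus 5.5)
We argue by induction on the length $\ell$, following the proof of~\cite[Theorem 7]{BinyaminiNovikov2019}, and replace every non-effective step by a sharp one. In that paper the relevant steps are taking Laurent coefficients, extracting roots and applying Rouch\'e arguments. Suppose the statement holds for length $\ell-1$. Write $f=(f_1,\dots,f_\ell)$, so that $\initial{f}{1}{\ell-1}$ is a cellular map on the base $\hExt{\initial{\cell}{1}{\ell-1}}{\rho}$ and is again in $\FD*[\format][\degree]$. The order of operations matters, because a general cellular map on the base destroys preparedness of $f_\ell$ only through its dependence on the base variables. So we first prepare the last coordinate, using covers that are \emph{translates} in all base variables with radii pulled back. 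Only then do we apply the inductive hypothesis to $\initial{f}{1}{\ell-1}$ composed with the base parts of these maps. We lift the resulting base covers $g'$ by $g'\odot\idmap$ (\Cref{def: odot id}). By \Cref{rem: composing prepared maps}, $f_\ell\comp(g'\odot\idmap)$ stays prepared in $z_\ell$, with a new center $g_\ell\comp g'$. The first $\ell-1$ coordinates become prepared by induction. At each stage we pass from $\rho$ to an intermediate parameter using \Cref{thm:sharp refinement}, which costs only $\polyfd[\ell][\rho,1/\sigma]$ maps, all of them translates.

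The heart of the argument is the last coordinate. If the fiber is $\point$ there is nothing to do. Otherwise, on each fiber $\cell_{\varZ}$ the map $z_\ell\mapsto f_\ell(\varZ,z_\ell)$ is holomorphic with non-vanishing derivative. We treat the types separately. For $\disc$ we would like $f_\ell(\varZ,z_\ell)=f_\ell(\varZ,0)+\phi(\varZ,z_\ell)$, where $\phi(\varZ,\cdot)$ is univalent. Translating by the center $c(\varZ)=f_\ell(\varZ,0)$, which is definable and holomorphic in $\FD*[\format][\degree]$, reduces this to the case of a map fixing $0$. The plan is to show that after a refinement to a smaller extension, $f_\ell-c$ is univalent on fibers. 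This holds on a $\{\sigma\}$-sub-disc by the Koebe distortion theorem, given a hyperbolic margin. We then cover by the image cell instead: set $w=f_\ell(\varZ,z_\ell)-c(\varZ)$ and take as new cell the image domain, trimmed to a definable cell $\disc[r'(\varZ)]$. Its radius $r'(\varZ)$ is chosen holomorphically, for example as $\partial_{z_\ell}f_\ell(\varZ,0)$ times a constant, which Koebe's $1/4$ theorem justifies. The cellular map is then the inverse $w\mapsto(f_\ell-c)^{-1}(w)$. It is holomorphic and definable, since its graph is obtained from that of $f_\ell$ by a first-order formula, and so lies in $\FD*[\format][\degree]$ by \Cref{prop: so minimality formula complexity}. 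Composing gives $f_\ell\comp g=w+c$, a translate. For $\puncDisc$, $\puncDisc*$ and $\annulus$ fibers, the map restricted to a fiber is a non-vanishing holomorphic function on an annulus. Its winding number $q$ around the image of the puncture is bounded by $\polyfd$ using \Cref{prop: so minimality connected component bound}, applied to preimages of generic points. We then take a definable $q$-th root $h$ with $(f_\ell-c)=h^{q}$, which exists after pulling back by the power map $z_\ell\mapsto z_\ell^{q}$ if needed. The map $h$ is univalent on a refined cell, and we conclude as in the disc case with the exponent $\pm q$. The sign and negative exponent allowed in \Cref{def: prepared map} absorb orientation reversal and the exchange of the puncture with infinity.

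The main obstacle is the complexity bookkeeping rather than the analysis. The Laurent or Taylor coefficients used in~\cite{BinyaminiNovikov2019} must be replaced by definable data. For this we use values and derivatives at chosen points, such as $f_\ell(\varZ,0)$ and $\partial_{z_\ell} f_\ell(\varZ,0)$, which are definable by formulas of bounded size. For annuli, where no such point is canonical, we would try the center defined as the unique omitted value, namely the image of the hole, characterized definably through bounded winding. We must also ensure that the total number of maps is $\poly_{\format*}(\degree,\rho,1/\sigma)$ across the $\ell$ inductive layers. Since $\ell\le\format$, the composition of at most $\ell$ layers of polynomial size is still polynomial, with exponents depending on $\format$. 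Each layer keeps complexity in $\FD*[\format][\degree]$ because it involves a fixed number of first-order operations on previously constructed maps.
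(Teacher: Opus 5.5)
Your outer induction is fine and matches the paper: you prepare the last variable first, then apply the inductive hypothesis on the base and lift by $g'\odot\mathrm{id}$. The gap is in the treatment of the last coordinate.

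\textbf{The univalence claim fails.} Cellularity gives only $\partial_{z_\ell}f_\ell\neq0$, that is, \emph{local} univalence on fibers. Koebe's theorems presuppose univalence, so they cannot supply it. \Cref{thm:sharp refinement} yields a number of pieces independent of $\degree$, whereas the valence of a fiber map can be of order $\degree$. For example, take $f(z_1,z_2)=(z_1,(z_2-3)^N)$ on $\disc[1]\odot\disc[1]$ with its $1/2$-extension. It is a semialgebraic cellular map of degree $O(N)$. Yet on each fiber $\disc[1]$ it takes an open set of values order $N$ times, since the disc subtends a fixed angle at $3$. If $f_2$ were injective on every fiber piece, no value could have more preimages in a fiber than there are pieces. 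So no cover by $\poly_\ell(\rho,1/\sigma)$ translates makes $f_2$ univalent on the pieces once $N$ is large.

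\textbf{The annulus case fails too.} On an annulus fiber such as $\annulus[1/2][1]$, the same map omits a whole disc of values around $0$. It has winding number $0$ about each of them and is still of valence order $N$. So the ``unique omitted value'', the root $h$, and its univalence all break down. Moreover, a single inverse branch around a single center cannot cover a fiber of a multivalent map.

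\textbf{What is missing.} You need a way to cut every fiber into $\poly(\degree)$ pieces carrying inverse branches. Their centers and radii must be holomorphic in the base variables, with complexity independent of the number of pieces. This is the real content of the theorem, not complexity bookkeeping.

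\textbf{How the paper does it.} The paper gets this from the (weak) \s CPT itself rather than by inverting $f$ fiberwise. Let $\widetilde f$ be the last coordinate of $f$. The paper applies the \s CPT to the cell $\cell\odot\cComplex\odot\cell[F]$, with coordinates $(\mathbf{z}_{1..\ell},w,z_{\ell+1})$, and to the graph $Z=\{w=\widetilde f(\mathbf{z}_{1..\ell+1})\}$.
\begin{itemize}
\item Since $\partial_{z_{\ell+1}}\widetilde f\neq0$, every covering map whose image lies in $Z$ has last fiber of type $\point$.
\item The maps built in the proof of the \s CPT are translates in the last variable and prepared in the next-to-last one. Such a map therefore has the form $(\zeta,\eta)\mapsto(\dots,\pm\eta^{q_j}+\phi_j(\zeta),(g_j)_{\ell+2}(\zeta,\eta))$.
\item Dropping the $w$-coordinate gives $\widetilde g_j$ with $\widetilde f\comp\widetilde g_j=\pm\eta^{q_j}+\phi_j(\zeta)$.
\item Cellularity of $\widetilde g_j$ follows from the chain rule $\pm q_j\eta^{q_j-1}=\partial_{z_{\ell+1}}\widetilde f\cdot\partial_\eta (g_j)_{\ell+2}$, and the covering property follows from compatibility with $Z$.
\end{itemize}
All the multivalence is absorbed by the Weierstrass-cell and clustering machinery inside the \s CPT.
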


The proof of~\cite[Theorem 7]{BinyaminiNovikov2019} essentially extends directly to the \so-minimal setting.
However, we will give a slightly more streamlined version of the proof in~\Cref{sec: CPrT} adapted to our presentation and definitions, which also resolves a small technical issue in the original proof.

\subsection{Outline of the proof of the \texorpdfstring{\s CPT}{\#CPT}}\label{sec: proof outline}
The proof of the \s CPT (\Cref{thm:cpt}) proceeds by induction on the length $\ell$ of the cell $\cell$.
In order to make the structure of the proof clearer, we divide the \s CPT into the following two families of statements, which we prove by simultaneous induction on $\ell$.
\begin{enumerate}[leftmargin=6\parindent,align=right]
	\item[$(\text{\s CPT}_1)_{\leq\ell}$] The \s CPT for a Weierstrass cell $\hExt{\cell}{\rho}\odot\ext{\cell[F]}{\gamma}$ for $\bigcup Z_i$ with gap $\gamma=1-1/\polyfd[\format*][\degree,M]$, where the length of $\cell$ is at most $\ell$.
	\item[$(\text{\s CPT}_2)_{\leq\ell}$] The \s CPT for cells $\cell\odot\cell[F]$ where the length of $\cell$ is at most $\ell$. 
\end{enumerate}
Similarly to the ``$\leq\ell$'' versions described above, we have the corresponding ``$<\ell$'' versions of each statement, defined in the obvious manner.

We divide the proof of the \s CPT into the following two inductive steps.
\begin{align}
	(\text{\s CPT}_2)_{<\ell}&\implies 
	(\text{\s CPT}_1)_{\leq\ell} \tag{Step $1$},\\
	(\text{\s CPT}_2)_{<\ell} + (\text{\s CPT}_1)_{\leq\ell}&\implies 
	(\text{\s CPT}_2)_{\leq\ell}\tag{Step $2$}.
\end{align}

The proof of Step 1 mostly follows the proof of the CPT in~\cite[Section 8]{BinyaminiNovikov2019}.
The main difference is that we do not treat the case of $M$ analytic sets $Z_1,\dots, Z_M$ by reduction to the case $M=1$, instead proving the general case directly.
This is needed in order to get covering cells of complexity independent of $M$ (see \Cref{rem : no M in complexity}).
Furthermore, some extra care is needed to ensure that certain discriminant sets are of low complexity, as well as when reducing to the case of proper projections for $\cell[F]=\puncDisc,\puncDisc*$.

The harder part of the proof is in Step 2.
In the rest of this section, we give an overview of the proof of this step and introduce several necessary ingredients.
The full details are given in \Cref{sec: main theorems proofs}.

Let $\hExt*{\cell\odot\cell[F]}{\rho}$ be a complex cell of length $\ell+1$ where $\cell$ is of length $\ell$.
Let $Z_1,\dots, Z_M\subset \hExt*{\cell\odot\cell[F]}{\rho}$ be analytic hypersurfaces, such that $\cell\odot\cell[F]$ and $Z_1\dots,Z_M$ are in $\FD[\format][\degree]$.
If the type of $\cell\odot\cell[F]$ contains $\point$, the claim follows immediately by induction on $\ell$ as we may consider the corresponding cell with all $\point$ fibers omitted, which is of smaller length.
Thus we may assume that the type of $\cell\odot\cell[F]$ does not contain $\point$.

By first applying the refinement theorem (\Cref{thm:sharp refinement}) and pulling back by the resulting cellular covers, we may reduce to the case where $\rho$ is equal to some constant (see \Cref{rem: reductions}).
This will only increase the size of the resulting cover by a factor of $\polyl(\rho)$ and will not qualitatively increase the complexity of the resulting cellular maps.

Similarly, by applying the refinement theorem at the end of the proof, we may assume throughout the proof that $\sigma$ is equal to some constant.

For the sake of exposition, we consider in this outline the case where $M=1$ and $\cell[F]=\annulus$.
We normalize the outer radius of $\cell[F]$ and assume that it is identically equal to $1$ (see \Cref{rem: normalized cells}).
For the rest of this section, we set $Z=Z_1$ and~$\hyperbolicParameter{\rho}=1/6$.
We denote by $\projfiber$ and $\projbase$ the projection maps given by $\projfiber(\initial{\varZ}{1}{\ell+1})=z_{\ell+1}$ and $\projbase(\initial{\varZ}{1}{\ell+1})=\initial{\varZ}{1}{\ell}$, for $\initial{\varZ}{1}{\ell+1}\in\cComplex^{\ell+1}$.

\subsubsection{\texorpdfstring{Removing an $\epsilon$-net}{Removing an ε-net}}\label{sec: removing an epsilon net intro}
In order to reduce to the case of a Weierstrass cell, where the set $Z$ cannot get too close to the boundary of the fiber $\cell[F]$, we will first reduce to a case where connected components of $Z$ do not move much in the radial direction of the last coordinate.

We will focus on the outer boundary of $\cell[F]$.
The inner boundary may be treated by inverting the last coordinate and repeating the construction.

The following lemma, similar to the \emph{fundamental lemmas} of~\cite[Lemmas 20--22]{BinyaminiNovikov2019}, will allow us to bound the diameter of $Z$ in the radial direction of the last coordinate, assuming its values in the last coordinate avoid an $\epsilon$-dense set near the outer boundary component of $\cell[F]$, for some $\epsilon>0$.
We denote by $\diam{X}[\cComplex]$ the diameter of a subset $X\subset\cComplex$ with respect to the standard Euclidean metric.

\begin{lemma}[{Fundamental Lemma for $\latticeComplement[\epsilon]$}]
	\label{lem:fundamental lemma}
	Let $\epsilon>0$, let $\hExt{\cell}{\rho}$ be a complex cell of length $\ell$ where $\rho=\order{1}$, and let $f:\hExt{\cell}{\rho}\to\cComplex\setminus\epsilon\zIntegers^2$ be holomorphic.
	Then
	\begin{equation}\label{eq:fundamental lemma}
		\diam{f(\cell)}[\cComplex]=\order[\ell]{\epsilon}.
	\end{equation}
\end{lemma}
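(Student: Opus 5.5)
By rescaling we may take $\epsilon=1$: replacing $f$ by $f/\epsilon$ turns a map into $\latticeComplement[\epsilon]$ into a map into $\latticeComplement$ and scales the diameter by $1/\epsilon$. So it suffices to show that any holomorphic $f:\hExt{\cell}{\rho}\to\latticeComplement$ satisfies $\diam{f(\cell)}[\cComplex]=\order[\ell]{1}$. The argument compares the hyperbolic metric of $\hExt{\cell}{\rho}$ with that of $X=\latticeComplement$, via the Schwarz--Pick lemma, as in the fundamental lemmas of~\cite{BinyaminiNovikov2019}. Two facts drive it. First, holomorphic maps are distance non-increasing for the Kobayashi metric. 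Second, $X$ is hyperbolic, and its hyperbolic density $\lambda_X$ is bounded below by a positive constant away from the lattice points. Near a lattice point it behaves like the density of a punctured disc, $\sim 1/(\abs{w-n}\log(1/\abs{w-n}))$. Since $X$ is invariant under the translations by $\zIntegers^2$, these local estimates are uniform over all lattice points.

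The key step is to control how far $f(\cell)$ can travel. By~\cite[Fact 6]{BinyaminiNovikov2019} and the normalization in \Cref{def: rho extension}, each one-dimensional fiber of $\cell$ sits inside its $\hyperbolicParameter{\rho}$-extension in one of two ways. For a disc, its hyperbolic diameter inside the extension is $\order{1}$. For $\puncDisc,\puncDisc*,\annulus$, each boundary circle has hyperbolic length at most $\rho=\order{1}$. I will connect any two points of $\cell$ by a path that changes one coordinate at a time, passing through at most $\ell$ one-dimensional fibers. It then suffices to bound the Euclidean diameter of the image of each fiber-slice of $\cell$, and to add these bounds, which gives the $\order[\ell]{1}$ dependence.

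In the disc case, a fiber-slice is a disc $\disc$ sitting inside $\ext{\disc}{\hyperbolicParameter{\rho}}$, and the restriction of $f$ is a holomorphic map of the larger disc into $X$. By Schwarz--Pick, the image of $\disc$ has $X$-hyperbolic diameter $\order{1}$. Using the lower bound on $\lambda_X$, a set of bounded hyperbolic diameter in $X$ has bounded Euclidean diameter. Concretely, $\int\lambda_X\,\abs{dw}$ along any path of Euclidean length $L$ that leaves a neighbourhood of a lattice point is $\order*{L}$, and paths staying near a single puncture have Euclidean length $\order{1}$.

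The annular case is the main obstacle, because its hyperbolic diameter is infinite. Here I will use that each boundary circle $S$ of the fiber $\cell[F]$ has hyperbolic length at most $\rho$ in the extension. Hence $f(S)$ is a closed curve of $X$-hyperbolic length $\order{1}$, and so of Euclidean diameter $\order{1}$; the concern is that $f(S)$ could wind around a lattice point. I will split on whether $f(S)$ is null-homotopic in $X$.

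If $f(S)$ is null-homotopic, then $f$ lifts to the universal cover of $X$ over the annulus. The maximum principle, applied to the lift or to $f$ itself, then bounds $f(\cell[F])$ inside the filled hull of the two boundary images. If instead $f(S)$ winds around a puncture, it must stay inside a small punctured neighbourhood of that single lattice point. This is because a short closed geodesic class in $X$ is homotopic to a loop around a single puncture, and any class enclosing two or more punctures has length bounded below. I will argue this by comparing with the sphere minus three points, which gives a uniform lower bound on the lengths of such classes.

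In either case, the image of each boundary circle lies in a set of bounded Euclidean diameter. By the maximum principle, $f(\cell[F])$ is contained in the union of the bounded complementary regions of the two boundary images. The remaining worry is that the two boundary images could lie far apart, with a long image in between. To rule this out, I will connect the inner and outer circles by a radial segment. Its hyperbolic length inside the extension is bounded, since the inner and outer circles of $\cell[F]$ are close in the hyperbolic metric of the extension, so the radial segment contributes only a bounded amount. Combining the fiberwise bounds over the $\ell$ coordinates then gives~\eqref{eq:fundamental lemma}.
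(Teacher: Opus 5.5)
There is a genuine gap at the last step of your annular case. To stop $f(S_1)$ and $f(S_2)$, the images of the inner and outer boundary circles, from lying far apart, you claim the two circles are at bounded hyperbolic distance in $\hExt{\annulus[r_1][r_2]}{\rho}=\annulus[\delta r_1][r_2/\delta]$. This is false. In the coordinate $\log z$, the extension is the quotient by $2\pi i\zIntegers$ of a vertical strip of width $L=\log\abs{r_2/r_1}+\pi^2/\rho$. The two circles sit at distance $\pi^2/(2\rho)$ from its edges, so their hyperbolic distance tends to infinity, logarithmically in $L$, as $L\to\infty$. The lemma places no bound on the modulus $\log\abs{r_2/r_1}$. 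Moreover, $\puncDisc,\puncDisc*$ are handled via $\annulus[r/n][r]$ and $\annulus[r][nr]$, which is exactly the regime $L\to\infty$. \cite[Fact 6]{BinyaminiNovikov2019} controls the \emph{lengths} of the boundary circles, not their mutual distance. So the radial segment gives no uniform estimate, and as written the argument does not close.

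The missing idea is connectedness, and with it your homotopy case split becomes unnecessary.
\begin{itemize}
\item The hyperbolic density of $X=\latticeComplement$ is bounded below by a positive constant on all of $X$, since it is invariant under $z\mapsto z+1,\,z+i$ and blows up at the lattice. Hence each $f(S_i)$, having hyperbolic length at most $\rho$, has Euclidean diameter $\order{\rho}$ whether or not it winds around a lattice point. Enclose it in a closed disc $B_i$ of radius $\order{\rho}$.
\item The set $K=f(\closure{\cell[F]})$ is connected. By the open mapping theorem $\partial K\subset f(S_1)\cup f(S_2)$, so $K$ lies in the filled hull of $B_1\cup B_2$.
\item If $B_1\cap B_2=\emptyset$, that hull is $B_1\cup B_2$ itself, which cannot contain a connected set meeting both discs. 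Hence $B_1\cap B_2\neq\emptyset$ and $\diam{K}=\order{\rho}$.
\end{itemize}
Alternatively, in your winding case: $S_1$ and $S_2$ are freely homotopic in the annulus, so both images wind around the same lattice point. This is the paper's second-case mechanism.

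Your reduction to one-dimensional slices also needs repair. A path changing one coordinate at a time generally leaves $\cell$, because later fibers depend on earlier coordinates. Instead, induct on $\ell$ using the section $\varZ'\mapsto(\varZ',r(\varZ'))$, where $r$ is the (outer) radius of the last fiber. This section is holomorphic on the extended base and lands in the closure of each fiber, which gives $\diam{f(\cell\odot\cell[F])}\leq C_\ell+2C_1$.

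Repaired this way, your route is genuinely different from the paper's and more elementary. The paper works with the whole skeleton. It shows $\pushforward{f}(\fundamentalGroup(\cell))$ is trivial unless $f(\skeleton{\cell})$ lies in a single ball $\ball[p][1/3]$, in which case maximum modulus finishes. It then invokes \cite[Lemma 42]{BinyaminiNovikov2019} for a hyperbolic diameter bound and converts that to a Euclidean one. Your version avoids the homotopy bookkeeping by using that the density of $\latticeComplement$, unlike that of $\cComplex\setminus\qty{0,1}$, never degenerates.
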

The fundamental lemma is proved in \Cref{sec:fundamental lemma}.
We remark that the proof, which is similar to the proofs of the fundamental lemmas of~\cite{BinyaminiNovikov2019}, may be easily adapted to yield better estimates depending on the size of $\rho$.
Since \eqref{eq:fundamental lemma} suffices for our arguments, we do not prove these more refined estimates.

In order to apply this lemma, we first reduce to the case where the projection $\projfiber(Z)$ of $Z$ to the last coordinate does not intersect the set $\epsilon\zIntegers^2\cap\annulus[1][6]$.
It will be convenient to consider $Z$ as a subset of $\ext{\cell}{1/6}\odot\annulus[1][6]$, which we view as the $1/2$-extension of $\ext{\cell}{1/3}\odot\annulus[2][3]$.
Under this assumption, we will establish the following \emph{radial bound}:
Let $Z_0$ be a connected component of the intersection of $Z$ with $\ext{\cell}{1/3}\odot\annulus[2][3]$.
Then
\begin{equation}\label{eq: radial bound outline}
	\diam{\operatorname{abs}({\projfiber}(Z_{0}))}[\rReal]<\epsilon \cdot \polyfld,
\end{equation}
where we denote by $\diam{X}[\rReal]$ the diameter of the subset $X\subset\rReal$ with respect to the standard Euclidean metric, and where $\operatorname{abs}:\cComplex\to\rReal$ is the absolute value map. 

\begin{figure}
	\includegraphics[width=0.66\linewidth]{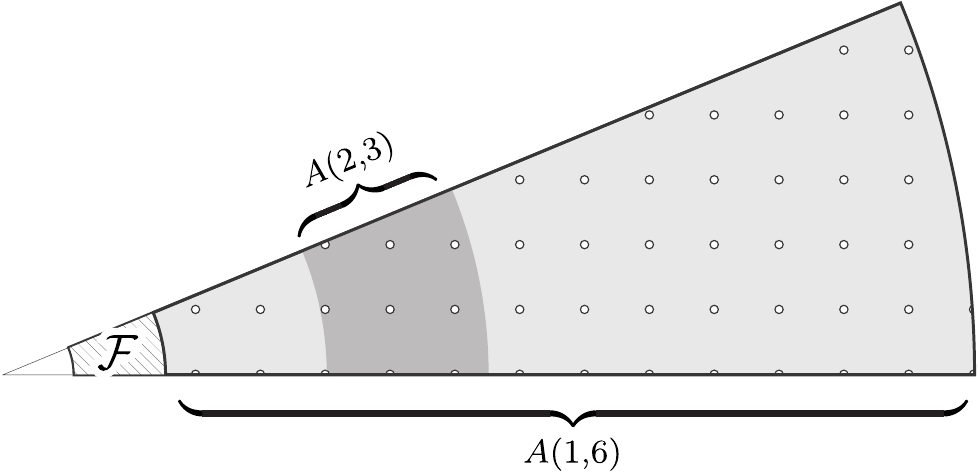}
	\caption{\emph{Removing an $\epsilon$-net.}
	By inductively applying the \s CPT in the base, we reduce to the case where the projection of $Z$ to the last coordinate does not intersect the set $\epsilon\zIntegers^2\cap\annulus[1][6]$.}
	\label{fig: epsilon net}
\end{figure}

A suitable choice of $\epsilon^{-1}=\poly_{\format*}(D)$ in \eqref{eq: radial bound outline}, together with estimates on the number of connected components $Z_0$ as above, yields a Weierstrass cell for $Z$ with gap $\gamma=1-1/\polyfd[\format*][\degree]$ (see \Cref{sec: RB}), allowing us to finish the proof by applying $\text{\s CPT}_1$.

The reduction to the case where $\projfiber(Z)$ does not intersect $\epsilon\zIntegers^2\cap\annulus[1][6]$ proceeds by first inductively applying $\text{\s CPT}_2$ in the base $\ext{\cell}{1/6}$ with respect to the projections $\projbase(Z\cap\Gamma_p)$, where $\Gamma_p$ is the graph of the constant function over $\ext{\cell}{1/6}$ of value $p\in\epsilon\zIntegers^2\cap\annulus[1][6]$.
We then pull back by a cellular map in the resulting cellular cover.

The image of this map is either contained in one of the projections $\projbase(Z\cap\Gamma_p)$, in which case we may proceed by induction on dimension, or else it is disjoint from all these projections.

\subsubsection{Radial bound}

Under the assumptions of the previous section, the radial bound \eqref{eq: radial bound outline} will follow from an application of the fundamental lemma (\Cref{lem:fundamental lemma}) to the composition of $\projfiber$ and a collection of holomorphic (not necessarily cellular) maps, defined on complex cells of a suitable extension and parameterizing the set $Z$.

In order to construct these parameterizing maps, we will first reduce to the case where the type of $\cell$ contains only $\disc,\annulus$ and then to the case where the type of $\cell$ contains only $\disc$.

This second reduction, carried out in \Cref{sec: reduce to only discs}, proceeds by a maximum-modulus type result which lets us control the radial diameter of $\projfiber(Z)$ by considering only those points of $Z$ which lie over the \emph{skeleton} of the base $\ext{\cell}{1/3}$ (i.e.\ the $\odot$-product of its boundary components, see \Cref{def: skeleton}).
As in the proof of the refinement theorem (\Cref{thm:sharp refinement}), we may cover this skeleton by $\order[\ell]{1}$ cellular polydiscs, whose $1/6$-extensions lie in $\ext{\cell}{1/6}$.

Having reduced to the case where the type of $\cell$ contains only $\disc$, we may assume (see \Cref{rem: normalized cells}) that $\cell=\disc[1]^{\times \ell}$.  
We now wish to construct the holomorphic maps parameterizing $Z$.
Since we do not need these maps to be cellular, we may allow ourselves to rotate $Z$ by a unitary transformation. 
The following proposition shows that one can (locally) rotate $Z$ such that it lies in a Weierstrass cell which is not too small compared to the complexity of $Z$.
In the statement of this proposition, a \emph{polydisc} refers simply to a cartesian product of discs, and not to the more general cellular polydiscs.

\begin{proposition}[{\cite[Lemma 20]{BinyaminiNovikov2017}}]\label{prop: Weierstrass polydiscs}
	Let $\ball\subset\cComplex^\ell$ be a Euclidean ball centered at the origin and let $Z\subset\ball$ be an analytic hypersurface such that $Z\in\FD[\format][\degree]$.
	Then there exists $\gamma=1-1/\polyfld$, a polydisc $\ext{\ball}{1/(1-\gamma)}\subset\polydisc\subset\ball$ and a unitary transformation $U:\cComplex^\ell\to\cComplex^\ell$ such that $\polydisc$ is a Weierstrass cell for $\polydisc\cap U^{-1}(Z)$ with gap $\gamma$.
\end{proposition}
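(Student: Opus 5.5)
The plan is to turn the degree information about $Z$ into a uniform bound on how many times complex lines meet $Z$. From that bound we get a volume bound, and from the volume bound we get, by averaging over directions and radii, an empty "conical shell". Finally we fit a polydisc whose outer fiber annulus lies inside that shell.

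First, by rescaling we assume $\ball=\ball[0][1]$. Let $L$ be a complex line such that $L\cap Z$ is discrete. Then $L\cap Z$, viewed as a subset of $\rReal^{2\ell}$, is defined by a formula involving $Z$ and the parameters of $L$. By \Cref{prop: so minimality formula complexity} the whole family $\{L\cap Z\}_L$ is in $\FD*[\format][\degree]$, so \Cref{prop: so minimality connected component bound} gives $\#(L\cap Z)\le N=\polyfld$, uniformly in $L$. Since $Z$ is a hypersurface, this holds for almost every $L$. By the Crofton formula for complex lines in $\cComplex^\ell$, it follows that the $(2\ell-2)$-volume of $Z\cap\ball[0][s]$ is at most $C_\ell N s^{2\ell-2}$ for all $s\le1$.

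Next we choose a direction and a shell. Fix $\eta=1/\polyfld$ to be determined, and split the radial range $[1/4,1/2]$ into $2N+2$ disjoint subshells $S_k=\{a_k<\abs{x}<b_k\}$ with $b_k/a_k=1+\order*{1/N}$. For a unit vector $v$ let
\[
T_k(v)=\qty{x\in S_k\st \dist{x}{\cComplex v}<\eta\abs{x}}.
\]
We average the quantity "does $Z$ meet $T_k(v)$" over $v\in S^{2\ell-1}$ and over $k$. A point $x\in Z$ lies in $T_k(v)$ only for $v$ in a set of measure $\order[\ell]{\eta^{2\ell-2}}$. For a fixed $v$, the shell can be thickened by $\eta$ and we can argue through the line counts directly: a $2(\ell-1)$-dimensional family of parallel lines, each meeting $Z$ at most $N$ times. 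This shows that for a suitable choice of $\eta=1/\polyfld$ there exist $v$ and $k$ with $Z\cap T_k(v)=\emptyset$.

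I expect this averaging to be the main obstacle. The volume bound alone does not control "meeting" a thin set; it only controls measure. What I would try first is a slicing argument. Fix $v$. The lines parallel to $v$ through base points $w$ with $\abs{w}<\eta/2$ each meet $Z$ in at most $N$ points. So for each such $w$, at most $N$ of the $2N+2$ shells contain a point of $Z$ on that line, and at least $N+2$ shells are empty along that line. Passing from "empty for each $w$, with a $w$-dependent choice of shell" to "one shell empty for all $w$" is the delicate part. I would handle it either by continuity of roots (Rouché on each fiber, so the roots move holomorphically and cannot cross an empty annulus), or by applying the counting bound to the definable set of pairs $(w,k)$. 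The latter controls the number of connected components of the region of bad base points, and then the bound is obtained by shrinking $\eta$.

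Finally, we construct the polydisc. Choose $U$ unitary sending $v$ to $e_\ell$, and set
\[
\polydisc=\disc[\eta a_k]^{\ell-1}\times\disc[b_k].
\]
Its $\gamma$-extension in the last coordinate, minus $\polydisc$ itself, lies in $\bigcup_{\abs{w}<\eta a_k}\{w\}\times\{a_k\le\abs{z}<b_k\}$, which is contained in $U(T_k(v))$ up to a constant change of $\eta$. Hence $\polydisc$ is a Weierstrass cell for $U^{-1}(Z)\cap\polydisc$ with $1-\gamma=\order*{1/N}$ (absorbing constants). We have $\polydisc\subset\ball$ since $b_k\le1/2$. Also $\polydisc\supset\ball[0][\eta/4]$, so $\polydisc$ contains $\ext{\ball}{1/(1-\gamma)}$ after taking $1-\gamma\le\eta/4$, which is still $1/\polyfld$.
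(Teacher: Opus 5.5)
Your Step 1 is correct: the uniform bound $\#(L\cap Z)\le N=\polyfld$ follows from the formula-complexity and component-count propositions. It is exactly the input the paper relies on, since the paper proves this proposition only by citing \cite[Lemma 20]{BinyaminiNovikov2017} and noting that the analog of \cite[Corollary 17]{BinyaminiNovikov2017} holds by sharp o-minimality. The Crofton volume bound and your final construction of $\polydisc$ from an empty truncated cone (after adjusting constants and enlarging $\gamma$ so that $1-\gamma\le\eta/4$) are also fine.

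\textbf{The gap.} You never prove that some $v,k$ satisfy $Z\cap T_k(v)=\emptyset$, and this is the actual content of the lemma. The pointwise estimate ``each $x\in Z$ excludes directions of measure $O(\eta^{2\ell-2})$'' cannot be summed over the uncountable set $Z\cap S_k$. Your proposed repair fixes $v$ and uses only counts along lines parallel to $v$, but for a fixed $v$ the conclusion can simply fail. Take $Z=\{z_\ell=\tfrac14+Mz_1\}\cap\ball$, whose format and degree do not depend on $M$, and $v=e_\ell$. Every line parallel to $e_\ell$ meets $Z$ exactly once. Yet as $w$ ranges over $\abs{w}<\eta/2$, the intersection point $\tfrac14+Mw_1$ attains every modulus in $[1/4,1/2]$ once $M\eta\ge1$. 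So no shell is empty over the whole base disc for any $\eta$ depending only on $F,D,\ell$. The Rouch\'e idea is circular: roots are kept out of an annulus only if the projection along $v$ is already proper over the base disc, which is what a Weierstrass polydisc is supposed to provide. Counting components of the bad set of pairs $(w,k)$ cannot beat this example either. The direction $v$ has to be chosen using the geometry of $Z$.

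\textbf{A possible fix.} What is missing is a way to turn the volume upper bound into a covering-number bound. One tool is Lelong's lower density estimate $\operatorname{vol}_{2\ell-2}(Z\cap B(p,r))\ge c_\ell r^{2\ell-2}$ for $p\in Z$ with $B(p,r)$ inside the ball. The argument then runs as follows.
\begin{enumerate}
\item Use $K=C_\ell N$ shells rather than $2N+2$; with $2N+2$ shells the average volume per shell is only $O(1)$. Take $\eta\ll 1/K$.
\item By Crofton, some $k$ has $\operatorname{vol}(Z\cap S_k^{+\eta})=O_\ell(N/K)$, where $S_k^{+\eta}$ is the $\eta$-neighbourhood of $S_k$.
\item By Lelong, a maximal $\eta$-separated subset of $Z\cap\overline{S_k}$ has $O_\ell(N/K)\,\eta^{-(2\ell-2)}$ points, and the $\eta$-balls around them cover $Z\cap S_k$.
\item Since $\abs{x}\ge 1/4$ on the shell, each such ball meets $T_k(v)$ only for $[v]$ in a set of measure $O_\ell(\eta^{2\ell-2})$ in $\mathbb{P}^{\ell-1}$.
\item Hence the bad directions have total measure $O_\ell(N/K)$, which is smaller than that of $\mathbb{P}^{\ell-1}$ for a suitable $C_\ell$.
\end{enumerate}
This gives $v$ and $k$ with $1-\gamma\sim\eta\sim1/K=1/\polyfld$, and your last paragraph then completes the proof.
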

\begin{proof}
	Denoting $\eta=1/(1-\gamma)$, the proof in~\cite{BinyaminiNovikov2017} extends directly to the \so-minimal setting, since the natural analog of~\cite[Corollary 17]{BinyaminiNovikov2017} holds by \so-minimality.
\end{proof}
We note that we may choose $\gamma$ in the statement of the proposition above to be at least $\order*{1}$.

The assumption that $\cell=\disc[1]^{\times \ell}$ allows us to cover ${\ext{\cell}{1/3}\odot\annulus[2][3]}$ by a collection $\mathcal{B}$ of $\polyfld$ balls whose $\delta$-extensions lie in $\ext{\cell}{1/6}\odot\annulus[1][6]$, for a suitable value of $\delta$ chosen using \Cref{prop: Weierstrass polydiscs}.
Fixing one of these balls, $\ball\in\mathcal{B}$, we assume without loss of generality that it is centered at the origin and apply \Cref{prop: Weierstrass polydiscs} to obtain a unitary transformation $U$ and a Weierstrass polydisc $\polydisc$ for $U^{-1}(Z)$ with an appropriate gap $\gamma=1-1/\polyfld$.

We now apply $\text{\s CPT}_1$ with respect to $\polydisc$ and the rotated set $U^{-1}(Z)$ to cover it by images of holomorphic maps $f_j$ defined on complex cells $\ext{\cell_j}{\delta'}$, each of them admitting an extension large enough in order to apply the fundamental lemma and such that the composition of $f_j$ with $\projfiber\comp U$ maps into $\latticeComplement[\epsilon]$.

\subsection{Proof of the fundamental lemma}\label{sec:fundamental lemma}
In this section we give the proof of the fundamental lemma for $\latticeComplement[\epsilon]$ (\Cref{lem:fundamental lemma}).
We begin with some notation needed for this section, as well as recalling several notions and results from \cite[Section 5]{BinyaminiNovikov2019}.
Let $f:X\to Y$ be a continuous map between path-connected spaces.
We write $\fundamentalGroup(X)$ for the fundamental group of $X$ and let $\pushforward{f}:\fundamentalGroup(X)\to\fundamentalGroup(Y)$ be the homomorphism induced by $f$.
For $r\in\cComplex\setminus\qty{0}$, we write $\circle[r]=\qty{z\in\cComplex\st\abs{z}=\abs{r}}$.

Let $X\subset\cComplex$ be a non-empty open subset such that $\cComplex\setminus X$ contains at least $2$ points.
The set $X$ admits a unique hyperbolic metric of constant curvature $-4$.
We denote by $\diam{Y}[X]$ the diameter of $Y\subset X$ with respect to this metric.
Given two such sets $X_1, X_2$, any holomorphic map $f:X_1\to X_2$ is (weakly) contracting with respect to the corresponding pair of hyperbolic metrics, by the Schwarz--Pick lemma.
In particular, if $f$ is a biholomorphism, then it is also an isometry.

\begin{definition}[Skeleton of a cell, {\cite[Definition 39]{BinyaminiNovikov2019}}]
	\label{def: skeleton}
	Let $\cell\odot\cell[F]$ be a complex cell  whose type contains only $\disc,\annulus$.
	Its \emph{skeleton}, denoted $\skeleton{\cell\odot\cell[F]}$, is given by $\skeleton{\cell}\odot\boundary\cell[F]$, where
\begin{equation}
	\boundary\disc[r]=\circle[r]\qc \boundary\annulus[r_1][r_2]=\circle[r_1]\cup\circle[r_2],
\end{equation}
and $\skeleton{\cComplex^0}=\cComplex^0$.
\end{definition}
When using the notation $\skeleton{\cell}$, we will always implicitly assume that the type of $\cell$ does not contain $\puncDisc$, $\puncDisc*$ or $\point$.

In the following two propositions, $\hExt{\cell}{\rho}$ is a complex cell and $f:\hExt{\cell}{\rho}\to X$ is a holomorphic map where $X$ is a hyperbolic Riemann surface (i.e.\ a Riemann surface admitting a hyperbolic metric, as above).
\begin{proposition}[{\cite[Lemma 41]{BinyaminiNovikov2019}}]\label{prop: skeleton controls boundary}
	We have the inclusion
	\begin{equation}
		\boundary f(\cell)\subset f(\skeleton{\cell}).
	\end{equation}
\end{proposition}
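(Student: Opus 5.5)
The plan is to argue by induction on the length $\ell$ of $\cell$, using the open mapping theorem in the fiber variable and the induction hypothesis in the base. First the degenerate case. If $f$ is constant on (the connected set) $\cell$, then $f(\cell)$ is a point and equals $f(\skeleton{\cell})$, since the skeleton is non-empty. Otherwise $f$ is a nonconstant holomorphic map to a Riemann surface, so it is open, and $f(\cell)$ is open in $X$. Hence any $w\in\boundary f(\cell)$ satisfies $w\notin f(\cell)$.

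Next I use compactness. Since $\hExt{\cell}{\rho}$ is a genuine extension, $\closure{\cell}$ is a compact subset of $\hExt{\cell}{\rho}$; this follows inductively, because the radii continue holomorphically to the extension, so they are continuous and non-vanishing on $\closure{\cell}$. Therefore $\closure{f(\cell)}\subset f(\closure{\cell})$, and $w=f(p)$ for some $p\in\closure{\cell}\setminus\cell$. Equivalently, $Y=f^{-1}(w)$ is an analytic subset of $\hExt{\cell}{\rho}$ with $Y\cap\cell=\emptyset$ and $Y\cap\closure{\cell}\neq\emptyset$. The goal is to show that $Y$ meets $\skeleton{\cell}$.

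Write $\cell=\cell'\odot\cell[F]$ and $p=(p',t)$, with $p'\in\closure{\cell'}$. I first push $t$ to the boundary of the fiber. Suppose $t$ lies in the open fiber $\cell[F](p')$. If $f(p',\cdot)-w$ is not identically zero, Hurwitz's theorem gives a zero of $f(z',\cdot)-w$ inside $\cell[F](z')$ for $z'\in\cell'$ close to $p'$, so $w\in f(\cell)$, a contradiction. If $f(p',\cdot)\equiv w$, I slide $t$ inside $\closure{\cell[F](p')}$ to a boundary circle. So I may assume $t\in\boundary\cell[F](p')$. Using \Cref{rem: normalized cells}, inverting if needed, I normalize the relevant boundary component to the unit circle, so $t=e^{i\theta}$. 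This normalization is a biholomorphism compatible with extensions and with skeletons.

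For fixed $\theta$, the map $F_\theta(z')=f(z',e^{i\theta})$ is holomorphic on $\hExt{\cell'}{\rho}$, and $w=F_\theta(p')\in\closure{F_\theta(\cell')}$. There are two cases.

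If $w\in\boundary F_\theta(\cell')$, the induction hypothesis gives $w\in F_\theta(\skeleton{\cell'})\subset f(\skeleton{\cell'}\odot\boundary\cell[F])=f(\skeleton{\cell})$, and we are done.

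The remaining case is $w=f(z'_0,e^{i\theta})$ with $z'_0\in\cell'$ in the open base, and this is the main obstacle. Near $(z'_0,e^{i\theta})$, for $z'\in\cell'$, the zeros $\zeta_1(z'),\dots,\zeta_m(z')$ of $f(z',\cdot)-w$ close to $e^{i\theta}$ must all satisfy $\abs{\zeta_j(z')}\geq1$, since otherwise $w\in f(\cell)$. Their product $\prod_j\zeta_j(z')$ is single-valued and holomorphic, being a symmetric function of the roots. It is non-vanishing, has modulus $\geq1$, and has modulus exactly $1$ at $z'_0$. By the minimum modulus principle, all roots stay on the unit circle identically in a neighbourhood. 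This makes the set $\{z'\in\cell'\st Y\cap(\{z'\}\times\circle[1])\neq\emptyset\}$ both open and closed in $\cell'$ near such points. Continuing along paths in $\cell'$, I will move the point to $\boundary\cell'$, where the first case applies. The case where $f(z'_0,\cdot)\equiv w$ is handled separately: then $\{z'_0\}\times\closure{\cell[F](z'_0)}\subset Y$ meets $\cell$, which is impossible.

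I expect the delicate part to be making this continuation rigorous. One must control the multivalued roots, ensure they do not escape the compact $\closure{\cell}$, and check that reaching $\boundary\cell'$ really lands in the setting of the induction hypothesis. Using the compactness of $\closure{\cell}$ and choosing a point of $Y\cap\closure{\cell}$ that minimizes the number of coordinates lying in open fibers should give a clean way to organize this.
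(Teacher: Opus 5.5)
Your strategy works up to the end of the second case ($w=f(z'_0,e^{i\theta})$ with $z'_0\in\cell'$), but the finish you propose there does not. Arriving at a point $q'\in\boundary\cell'$ with $f(q',e^{i\theta'})=w$ does not by itself put you in the first case. Whether $w\in\boundary F_{\theta'}(\cell')$ is not determined by where $q'$ lies, and if $w\in F_{\theta'}(\cell')$ you are back in the second case, so the continuation is circular.

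You do not need the first case there; your own ingredients finish it. Let $E=\{z'\in\cell'\st Y\cap(\{z'\}\times S(1))\neq\emptyset\}$.
\begin{itemize}
\item Your minimum-modulus argument applies at every point of $E$, since at each such $z'$ we have $f(z',\cdot)\not\equiv w$ (otherwise $w\in f(\cell)$). So $E$ is open.
\item $E$ is closed in $\cell'$, because $S(1)$ is compact and lies in the extended fiber over all of $\hExt{\cell'}{\rho}$.
\item Since $\cell'$ is connected, $E=\cell'$.
\end{itemize}
Now take any $s\in\skeleton{\cell'}$, points $z'_n\in\cell'$ tending to $s$, and $\zeta_n\in S(1)$ with $f(z'_n,\zeta_n)=w$. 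A limit point $\zeta$ gives $w=f(s,\zeta)$ with $(s,\zeta)\in\skeleton{\cell}$ in your normalized coordinates.

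In fact this case is vacuous. Once all $m$ roots have modulus $1$, let $\sigma_1$ be their sum. Then $\mathrm{Re}(e^{-i\theta}\sigma_1)\leq m$, with equality at $z'_0$. The maximum principle forces every root to be $\equiv e^{i\theta}$. Hence $F_\theta\equiv w$, and $w\in\boundary F_\theta(\cell')$ after all.

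With this repair, your induction is a valid alternative to the argument the paper relies on. The paper does not reprove the statement; it cites \cite[Lemma 41]{BinyaminiNovikov2019}. It runs essentially the same argument for \Cref{lem: skeleton controls boundary analytic set}, which it calls similar:
\begin{itemize}
\item Pick $p\in\closure{\cell}\cap f^{-1}(w)$ and let $j$ be the largest index with $p_j$ in the open fiber. Choose $p$ so that this $j$ is minimal.
\item Normalize so that the circles containing $p_{j+1},\dots,p_\ell$ are constant.
\item If the slice $z_j\mapsto f(p_{1..j-1},z_j,p_{j+1..\ell})$ is $\equiv w$, sliding $p_j$ to the boundary contradicts minimality.
\item Otherwise $p_j$ is an isolated solution. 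Push all other coordinates slightly into $\cell$; then Rouch\'e/Hurwitz gives $w\in f(\cell)$, a contradiction.
\end{itemize}
That route is a single step, with no induction on the length, no case split and no minimum-modulus argument. Yours only ever moves the last coordinate, which spares you perturbing coordinates after $j$, but you pay for it with the extra case. The reorganization you suggest at the end is essentially this direct argument: minimize the number of coordinates in open fibers and work at the largest such index.
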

\begin{proposition}[{\cite[Lemma 42]{BinyaminiNovikov2019}}]\label{prop: trivial fundamental group}
	If $\pushforward{f}(\fundamentalGroup(\cell))\subset\fundamentalGroup(X)$ is trivial, then 
	\begin{equation}
		\diam{f(\cell)}[X]=\order[\ell]{\rho}.
	\end{equation}
\end{proposition}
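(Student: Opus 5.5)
The plan is to pass to the universal cover of $X$, reduce to the case where the type of $\cell$ contains only $\disc$ and $\annulus$, and then control $f(\cell)$ through the skeleton using \Cref{prop: skeleton controls boundary} and hyperbolic convexity.

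\emph{Lifting.} Let $p:\disc[1]\to X$ be the universal covering. Each cell $\hExt{\cell}{\rho}$ deformation retracts onto a product of circles and points, and $\pushforward{f}(\fundamentalGroup(\cell))$ is trivial. Hence $f$, restricted to a neighbourhood of $\closure{\cell}$ of the same homotopy type, lifts to a holomorphic $\tilde f$ into $\disc[1]$. (If needed, first apply the statement to the slightly smaller extension $\hExt{\cell}{2\rho}$, which changes only the constant.) The covering map $p$ is a local isometry, so $\diam{f(\cell)}[X]\leq\diam{\tilde f(\cell)}[\disc[1]]$, and it suffices to bound the latter.

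\emph{Removing punctures.} Over a fiber of type $\puncDisc$, the map $\tilde f$ is bounded (its values lie in $\disc[1]$) and holomorphic, so by Riemann's removable singularity theorem, applied fiberwise and using holomorphy in the parameters, it extends across the puncture. Thus we may replace $\puncDisc$ by $\disc$. Fibers of type $\puncDisc*$ are handled the same way after an inversion (\Cref{rem: normalized cells}). Fibers of type $\point$ are simply omitted. So we may assume the type of $\cell$ contains only $\disc,\annulus$, and $\skeleton{\cell}$ is defined.

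\emph{Reduction to the skeleton.} The set $\tilde f(\cell)$ is open, connected and relatively compact in $\disc[1]$, because $\closure{\cell}\subset\hExt{\cell}{\rho}$. By \Cref{prop: skeleton controls boundary} applied to $\tilde f$, we have $\boundary\tilde f(\cell)\subset K:=\tilde f(\skeleton{\cell})$. Consequently $\tilde f(\cell)$ lies in the filled-in set of $K$, i.e.\ in the complement of the component of $\disc[1]\setminus K$ adjacent to $\boundary\disc[1]$. Indeed, if $\tilde f(\cell)$ met that component, then by connectedness and the fact that $\boundary\tilde f(\cell)\subset K$, it would contain the whole component, contradicting relative compactness.

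Now decompose $\skeleton{\cell}$ into at most $2^\ell$ pieces $T_i$, one for each choice of inner or outer circle in each annulus coordinate. Each $T_i$ is a connected, twisted product of circles. I claim $\diam{\tilde f(T_i)}[\disc[1]]=\order{\ell\rho}$, proved by induction on $\ell$. Two points of $T_i$ can be joined by first moving inside the last fiber circle. That circle has length at most $\rho$ in the hyperbolic metric of the extended fiber by~\cite[Fact 6]{BinyaminiNovikov2019}, and the restriction of $\tilde f$ to that fiber is contracting by Schwarz--Pick. The remaining motion takes place in the skeleton of the base, where the induction hypothesis applies after lifting the path to a section of the fiber circle.

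Let $H_i$ be the hyperbolic convex hull of $\tilde f(T_i)$, so $\diam{H_i}[\disc[1]]=\diam{\tilde f(T_i)}[\disc[1]]$. The filled-in set of $K$ is contained in the filled-in set of $\bigcup_i H_i$. That union consists of at most $2^\ell$ convex sets, each of diameter $\order{\ell\rho}$, and the components of the union that meet $\tilde f(\cell)$ form a single chain, since $\tilde f(\cell)$ is connected. Any bounded complementary region enclosed by such a chain has diameter at most the sum of the diameters of the $H_i$. Hence $\diam{\tilde f(\cell)}[\disc[1]]\leq 2^{\ell}\cdot\order{\ell\rho}=\order[\ell]{\rho}$.

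\emph{Main obstacle.} I expect the hardest step to be this last one: showing that the connected open set $\tilde f(\cell)$, whose boundary lies in the union of several small images of skeleton pieces, cannot be large. The difficulty is that a radial path across an annulus fiber has no a priori bound on its hyperbolic length, so the skeleton pieces cannot be joined directly. The first argument I would try is the convexity and filling argument above. If it proves awkward, the fallback is to argue fiberwise: for each base point $\varZ$, the one-variable map $w\mapsto\tilde f(\varZ,w)$ on an annulus has an image whose boundary lies in two sets of diameter at most $\rho$, and the same filling argument bounds the image by $\order{\rho}$. One then concludes by induction over the base.
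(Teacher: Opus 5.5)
Your lifting step, the use of \Cref{prop: skeleton controls boundary} for the lift $\tilde f$, the $O(\ell\rho)$ bound on the image of each skeleton component via generating circles (as in \Cref{prop: short circles}), and the filling argument in the simply connected target are sound. One phrase should be tightened. Components of $\bigcup_i H_i$ that meet $\tilde f(\mathcal{C})$ need not form a chain, since one may sit inside the hole of another. The correct statement is this: fills of disjoint compact connected sets are either nested or disjoint. Hence the connected set $\overline{\tilde f(\mathcal{C})}$ lies in the fill of a single component $C$ of $\bigcup_i H_i$, and that fill has diameter at most $\operatorname{diam} C\leq\sum_i \operatorname{diam} H_i$.

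The genuine gap is in \emph{Removing punctures}. Riemann's theorem lets you fill a puncture only in the \emph{last} coordinate. If a fiber of type $D_\circ$ or $D_\infty$ sits in a coordinate $j<\ell$, the radii of the later fibers are holomorphic and nonvanishing only for $z_j\neq 0$, and they may vanish or blow up at $z_j=0$. For example, take $\mathcal{C}=D_\circ(1)\odot D(z_1)$. Then there is no fiber over $z_1=0$, so $D(1)\odot D(z_1)$ is not a cell, and $\overline{\mathcal{C}}$ is not compact in $\mathcal{C}^{\{\rho\}}$. Also, ``extending $\tilde f$ in $z_1$ with the other coordinates as parameters'' has no meaning here: for fixed $z_2\neq 0$ the admissible $z_1$ form an annulus, not a punctured disc. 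So the reduction to types $D,A$ fails as written, and your skeleton and relative-compactness arguments depend on that reduction. Even in the last coordinate, replacing $D_\circ$ by $D$ at fixed $\delta$ changes the hyperbolic extension parameter. That particular change is harmless, since $2\pi\delta/(1-\delta^2)=O(\pi^2/(2\abs{\log\delta}))$.

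The fix is exhaustion, exactly as the paper does in the proof of the fundamental lemma and in the step $\text{RB}_2\Rightarrow\text{RB}$. Let $\mathcal{C}_n$ be obtained by replacing each $D_\circ(r)$ by $A(r/n,r)$ and each $D_\infty(r)$ by $A(r,nr)$, with the later radii restricted to the smaller bases. The formula $\rho=\pi^2/(2\abs{\log\delta})$ is the same for $D_\circ$, $D_\infty$ and $A$, so $\mathcal{C}_n$ admits a $\{\rho\}$-extension with the same $\rho$. This extension is contained in $\mathcal{C}^{\{\rho\}}$, since $A(\delta r/n,r/\delta)\subset D_\circ(r/\delta)$ and $A(\delta r,nr/\delta)\subset D_\infty(\delta r)$. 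Hence your lift $\tilde f$ restricts to it. Your argument then gives $\operatorname{diam}(\tilde f(\mathcal{C}_n))=O_\ell(\rho)$ uniformly in $n$, and $\mathcal{C}=\bigcup_n\mathcal{C}_n$ is an increasing union, which yields the claim.
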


We have the following lemma, analogous to~\cite[Lemma 44]{BinyaminiNovikov2019}.
\begin{lemma}\label{lem:shortest loop}
	Let $\gamma:[0,1]\to\latticeComplement$ be a loop of hyperbolic length $\order{1}$.
	If $\gamma[0,1]$ is not contained in $\bigcup_{p\in\zIntegers^2}\ball[p][1/3]$, then $\gamma$ is contractible.
\end{lemma}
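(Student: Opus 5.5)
Here $\latticeComplement$ carries its hyperbolic metric of curvature $-4$, and $\gamma$ is a loop in it of hyperbolic length at most some absolute constant $L$. The plan is to compare this metric with the Euclidean one away from the lattice. The aim is to show that a short loop passing through the ``thick part'' $\latticeComplement\setminus\bigcup_{p}\ball[p][1/3]$ has small Euclidean diameter. Hence it lies in a simply connected region of $\latticeComplement$, such as a Euclidean disc avoiding the lattice, and is therefore contractible.

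\emph{Step 1: a lower bound on the density.} Let $\lambda(z)\abs{dz}$ denote the hyperbolic metric of $\latticeComplement$. By the Schwarz--Pick lemma, $\lambda$ dominates the hyperbolic density of any larger hyperbolic domain. For $z\in\latticeComplement$, pick lattice points $p\neq q$ with $\abs{z-p}$ minimal and $q$ a nearest neighbour of $p$. Then $\latticeComplement\subset\cComplex\setminus\{p,q\}$, and the density of the twice-punctured plane is known to satisfy $\lambda_{p,q}(z)\geq c/(\abs{z-p}(1+\abs{\log\abs{z-p}}))$ near $p$, and to be bounded below by a constant on compact sets away from $p,q$. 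Combining this with the $\zIntegers^2$-periodicity of $\lambda$ gives a uniform bound $\lambda(z)\geq c_0>0$ on all of $\latticeComplement$. It also gives $\lambda(z)\gtrsim 1/(\abs{z-p}\log(1/\abs{z-p}))$ near each lattice point $p$, though only the uniform bound is needed.

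\emph{Step 2: short loops have small Euclidean size.} Since $\lambda\geq c_0$ everywhere, a loop of hyperbolic length at most $L$ has Euclidean length at most $L/c_0$. By hypothesis the loop passes through a point $z_0$ at distance at least $1/3$ from $\zIntegers^2$. This alone does not confine $\gamma$ to a disc avoiding the lattice, since $L/c_0$ need not be small. The fix is to exploit the freedom in the constant hidden in ``$\order{1}$'', which may be chosen as small as we like. Take $L$ so small that $L/c_0<1/3$. Then $\gamma$ stays within Euclidean distance $1/3$ of $z_0$, so $\gamma$ lies in the disc $\ball[z_0][1/3]$. This disc is convex and contained in $\latticeComplement$, so $\gamma$ is null-homotopic there by a straight-line homotopy.

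The only place needing care is Step 1, namely that the density is bounded below uniformly. The decay $1/(r\log(1/r))$ near punctures tends to $+\infty$, so it causes no difficulty. Periodicity reduces the question to a fundamental domain, where the bound follows from comparison with $\cComplex\setminus\{0,1\}$ (or $\cComplex\setminus\{0,i\}$) and continuity of its density. If the intended reading of $\order{1}$ requires a fixed moderate constant rather than an arbitrarily small one, the argument must be strengthened. In that case I would use the quantitative form of Step 1 to show that a loop of length $\order{1}$ through the thick part cannot wind around any puncture, since encircling $p$ at Euclidean scale $r$ costs hyperbolic length $\gtrsim 1/\log(1/r)$ while reaching the thick part costs $\gtrsim\log\log(1/r)$. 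I would also show that it cannot enclose two or more punctures, since that forces Euclidean length $\geq 1$ through the thick part. I expect this winding-number bookkeeping to be the main obstacle in the stronger version.
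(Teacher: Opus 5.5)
Your proof is correct, but it takes a different route from the paper's.

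\emph{The paper's route.} The paper argues in the universal cover $\hHalfplane\to\latticeComplement$. For $z\in\latticeComplement$ it takes $s(z)$ to be the minimal hyperbolic distance between two distinct lifts of $z$; this does not depend on the lift, since deck transformations are isometries. A loop through $z$ of length less than $s(z)$ lifts to a closed loop, hence is contractible. Continuity of $s$ and its invariance under the isometries $z\mapsto z+1$, $z\mapsto z+i$ then give a positive minimum of $s$ on the compact set $[-1,1]^2\setminus\bigcup_{p}\ball[p][1/3]$. The $\order{1}$ constant is any number below this minimum. This needs no information about the density of the metric, only discreteness of the deck group and compactness, and it yields a better constant.

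\emph{Your route.} You convert hyperbolic length into Euclidean length via a uniform lower bound on the density, then confine the loop to the lattice-free convex disc $\ball[z_0][1/3]$. This uses a Schwarz--Pick comparison with a twice-punctured plane and the blow-up of its density at the punctures. That is exactly the fact the paper invokes at the end of the proof of the fundamental lemma to pass from hyperbolic to Euclidean diameter. So your approach unifies the two steps and gives the more concrete conclusion that the loop is Euclidean-small. The cost is a worse constant, which is harmless here.

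You could even avoid the behaviour near the punctures. To leave $\ball[z_0][1/6]$, the loop would have to traverse an arc staying at distance at least $1/6$ from $\zIntegers^2$. There the density is bounded below by periodicity and compactness alone.

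Finally, the hedge in your last paragraph is unnecessary. By the paper's asymptotic-notation convention, a hypothesis $c=\order{1}$ means there is a constant $C$ such that the conclusion holds whenever $c\leq C$. So taking $L$ small is exactly the intended reading, and the paper's own proof likewise just chooses the constant small. No winding-number bookkeeping is needed.
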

\begin{proof}
	Let $\pi:\hHalfplane\to\latticeComplement$ be a holomorphic covering map (in the sense of covering spaces), where $\hHalfplane$ is the upper half-plane.
	Let $z\in \latticeComplement$ and let $\widetilde z_0\in \pi^{-1}(z)$.
	Since $\pi$ is a covering map, there exists some minimal $\lambda_{\widetilde z_0}>0$ such that any path in $\hHalfplane$ from $\widetilde{z}_0$ to any other lift $\widetilde{z}_1\in\pi^{-1}(z)$ is of hyperbolic length at least $\lambda_{\widetilde z_0}$.
	Since deck transformations preserve hyperbolic lengths, we have that $\lambda_{\widetilde z_0}=\lambda_{\widetilde z_1}$ for any two lifts $\widetilde z_0,\widetilde z_1$ of $z$.
	Thus the map $\lambda:\latticeComplement\to\rReal$ given by $z\mapsto \lambda_{\widetilde z_0}$ is well defined and continuous.
	Thus a loop $\gamma$ passing through a point $z\in\latticeComplement$ is contractible if its hyperbolic length is less than $\lambda(z)$.

	The maps $z\mapsto z+1$ and $z\mapsto z+i$ are biholomorphic automorphisms of $\latticeComplement$ and hence they are also isometries.
	This implies that the map $\lambda$ is doubly periodic and so its values outside of $\bigcup_{p\in\zIntegers^2}\ball[p][1/3]$ are determined by its values on the compact set $[-1,1]^2\setminus \bigcup_{p\in\zIntegers^2}\ball[p][1/3]$.
	We may take the constant $\order{1}$ in the statement of the lemma to be any constant smaller than the minimal value of $\lambda$ on this compact set.
\end{proof}

\begin{definition}
	Let $\cell$ be a complex cell of length $\ell$.
	Each connected component $S$ of $\skeleton{\cell}$ is topologically a product of $\ell$ circles.
	Normalizing $\cell$ (as in \Cref{rem: normalized cells}) such that $S=\circle[1]^{\times \ell}$, we call the collection 
	\begin{equation}
		\qty{S_{\varZ,i}}_{i=1}^\ell=
		\Big\{\qty{\initial{\varZ}{1}{i-1}}\times\circle[1]\times\qty{\initial{\varZ}{i+1}{\ell}}\Big\}_{i=1}^\ell	
	\end{equation}
	the \emph{generating circles} of $S$ through $\varZ\in S$.
	For the sake of this definition, if $i> j$, we interpret $\qty{\initial{\varZ}{i}{j}}$ as the singleton $\cComplex^0$.
\end{definition}

\begin{proposition}\label{prop: short circles}
	Let $\hExt{\cell}{\rho}$ be a complex cell of length $\ell$, let $S_{\varZ,i}$ be a generating circle for some component $S$ of $\skeleton{\cell}$ and let $f:\hExt{\cell}{\rho}\to X$ be a holomorphic map where $X$ is a hyperbolic Riemann surface.
	Then the hyperbolic length of $f(S_{\varZ,i})$ is at most $\rho$.
\end{proposition}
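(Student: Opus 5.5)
Since $S_{\varZ,i}$ lies in the fiber of $\cell$ over the first $i-1$ coordinates $\initial{\varZ}{1}{i-1}$, with the remaining coordinates frozen, I will reduce to a one-dimensional statement via the Schwarz--Pick lemma.

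Fix $\varZ\in S$ and consider the one-variable map
\begin{equation*}
	g(w)=f(\initial{\varZ}{1}{i-1},w,\initial{\varZ}{i+1}{\ell}),
\end{equation*}
defined for $w$ ranging over the fiber of $\hExt{\cell}{\rho}$ in the $i$-th coordinate over $\initial{\varZ}{1}{i-1}$. The first thing to check is that the coordinates $z_{i+1},\dots,z_\ell$ may stay frozen while $w$ varies. The later fibers depend on $w$ through their radii, so this is not automatic. Here I use that $S$ is a connected component of the skeleton, and that we are in the normalized situation from the definition of generating circles, where $S=\circle[1]^{\times\ell}$. In that normalization the skeleton component is a genuine product of circles, so the points $(\initial{\varZ}{1}{i-1},w,\initial{\varZ}{i+1}{\ell})$ with $|w|=1$ lie in $S\subset\cell$. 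For $w$ in a neighbourhood of the circle inside the one-dimensional extended fiber, these points stay inside $\hExt{\cell}{\rho}$ by openness and the product structure. Concretely, I would restrict to the open set
\begin{equation*}
	W=\{w\st (\initial{\varZ}{1}{i-1},w,\initial{\varZ}{i+1}{\ell})\in\hExt{\cell}{\rho}\}\subset \hExt{\cell[F]_i}{\rho},
\end{equation*}
where $\cell[F]_i$ denotes the $i$-th fiber over $\initial{\varZ}{1}{i-1}$. Normalizing as in \Cref{rem: normalized cells}, this fiber is $\disc[1]$ or an annulus with outer radius $1$, so that $\boundary\cell[F]_i\ni\circle[1]$.

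The main obstacle is that $W$ might be strictly smaller than $\hExt{\cell[F]_i}{\rho}$. Schwarz--Pick on $W$ would then only give the weaker bound "length in the metric of $W$", which is larger than the length in the metric of $\hExt{\cell[F]_i}{\rho}$.

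I expect to handle this as follows. If the normalization of $S$ to $\circle[1]^{\times\ell}$ is understood as a biholomorphic change of coordinates of the whole extended cell, it makes all radii constant along $S$. Then $W$ contains the full extended fiber, and this is the reading I would adopt. If not, I would instead vary the later coordinates by scaling them proportionally to their radii, i.e.\ use the map
\begin{equation*}
	w\mapsto\bigl(\initial{\varZ}{1}{i-1},\,w,\,z_{i+1}\tfrac{r_{i+1}(\cdot)}{r_{i+1}(\varZ)},\dots\bigr).
\end{equation*}
This is a holomorphic map from $\hExt{\cell[F]_i}{\rho}$ into $\hExt{\cell}{\rho}$: it preserves the ratio $|z_j|/|r_j|$ and hence each extended fiber. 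It is built inductively in $j$, so that each later radius is evaluated at the already-defined previous coordinates. On $|w|=1$ it traces $S_{\varZ,i}$, since $S$ is exactly the locus where these ratios equal the boundary values. Composing with $f$ gives a holomorphic map $g:\hExt{\cell[F]_i}{\rho}\to X$ with $g(\circle[1])=f(S_{\varZ,i})$.

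Finally, by Schwarz--Pick, $g$ does not increase hyperbolic length from the hyperbolic metric of $\hExt{\cell[F]_i}{\rho}$ to that of $X$. By the normalization in \Cref{def: rho extension} (cf.~\cite[Fact 6]{BinyaminiNovikov2019}), the boundary circle $\circle[1]$ of $\cell[F]_i$ has hyperbolic length at most $\rho$ in $\hExt{\cell[F]_i}{\rho}$. Hence the hyperbolic length of $f(S_{\varZ,i})$ is at most $\rho$.
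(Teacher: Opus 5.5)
Your proposal is correct and is essentially the paper's proof: normalize so that $S=\circle[1]^{\times\ell}$, restrict $f$ to the slice $\{\initial{\varZ}{1}{i-1}\}\times\cComplex\times\{\initial{\varZ}{i+1}{\ell}\}$ of $\hExt{\cell}{\rho}$, and combine Schwarz--Pick with \cite[Fact 6]{BinyaminiNovikov2019}. Your first reading settles your worry about $W$, and this is what the paper's restriction implicitly uses: after normalization the radii defining $S$ are identically $1$ on the whole extended base, and for annulus coordinates the condition $\abs{r_{j,1}}<\abs{r_{j,2}}$ on the extended base keeps $\abs{z_j}=1$ inside every later extended fiber, so the slice is the full extended fiber $\hExt{\cell[F]_i}{\rho}$ (two small slips that do not affect the argument: the other annulus radius is not constant, contrary to your phrasing, and $S$ lies in $\hExt{\cell}{\rho}$ but not in $\cell$).
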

\begin{proof}
	We may assume  $\cell$ is normalized such that $S=\circle[1]^{\times \ell}$.
	If $\ell=1$, this is a special case of~\cite[Fact 6]{BinyaminiNovikov2019}.
	We may reduce to the case $\ell=1$ by restricting $f$ to the intersection of $\hExt{\cell}{\rho}$ and the set $\qty{\initial{\varZ}{1}{i-1}}\times\cComplex\times\qty{\initial{\varZ}{i+1}{\ell}}$.
\end{proof}

Finally, we will prove the following lemma, which implies the fundamental lemma (\Cref{lem:fundamental lemma}) in the case $\epsilon=1$.
The case of general $\epsilon>0$ follows immediately by rescaling.
\begin{lemma}
	Let $\hExt{\cell}{\rho}$ be a complex cell of length $\ell$ where $\rho=\order{1}$ and let $f:\hExt{\cell}{\rho}\to\latticeComplement$ be holomorphic.
	Then either
	\begin{equation}\label{eq:fundamental lemma ball bound}
		f(\cell)\subset \ball[p][1/3]
	\end{equation}
	for some $p\in\zIntegers^2$, or
	\begin{equation}\label{eq:fundamental lemma hyperbolic metric bound}
		\diam{f(\cell)}[\latticeComplement]=\order[\ell]{\rho}.
	\end{equation}
	In either case, we have the following bound in the Euclidean metric:
	\begin{equation}\label{eq:fundamental lemma Euclidean metric bound}
		\diam{f(\cell)}[\cComplex]=\order[\ell]{1}.
	\end{equation}
\end{lemma}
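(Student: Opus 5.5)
We follow the strategy of \cite[Lemma 44 and Section 5]{BinyaminiNovikov2019}: split according to whether the induced map $\pushforward{f}:\fundamentalGroup(\cell)\to\fundamentalGroup(\latticeComplement)$ is trivial. If it is trivial, \Cref{prop: trivial fundamental group} gives \eqref{eq:fundamental lemma hyperbolic metric bound} directly. So assume $\pushforward{f}$ is nontrivial. First we reduce to the case where the type of $\cell$ contains only $\disc$ and $\annulus$. Fibers of type $\point$ can simply be omitted. For the types $\puncDisc$ and $\puncDisc*$, definability in an o-minimal structure (cf.\ \Cref{rem: complex cell changes}) rules out essential singularities, and $f$ misses the infinitely many lattice points. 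By Picard-type reasoning, $f$ therefore extends holomorphically, or meromorphically at $\infty$, across the puncture. We may then replace such a fiber by a disc or annulus of comparable extension, or treat it as an annulus of constant radii as in the proof of \Cref{thm:sharp refinement}. With this reduction, $\fundamentalGroup(\cell)$ is generated by the classes of the generating circles $S_{\varZ,i}$ of a component of $\skeleton{\cell}$, because $\cell$ deformation retracts fiberwise onto its skeleton.

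Now suppose some generating circle $S_{\varZ,i}$ has $f(S_{\varZ,i})$ non-contractible in $\latticeComplement$. By \Cref{prop: short circles}, this loop has hyperbolic length at most $\rho=\order{1}$. By \Cref{lem:shortest loop}, taking the constant in $\rho=\order{1}$ small enough, the image of the loop must then lie in $\bigcup_p\ball[p][1/3]$, and hence, being connected, in a single $\ball[p][1/3]$. We then want to propagate this to all of $\cell$. Every point of $\skeleton{\cell}$ is reached from $\varZ$ by a chain of generating circles. Since the generating circles through nearby points are homotopic, each of them also maps to a non-contractible loop. Each of these loops therefore lies in some ball $\ball[p'][1/3]$, and consecutive circles intersect, so all of them lie in the same ball. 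This gives $f(\skeleton{\cell})\subset\ball[p][1/3]$. By \Cref{prop: skeleton controls boundary}, $\boundary f(\cell)\subset\ball[p][1/3]$. Since $f(\cell)$ is open and bounded in $\latticeComplement$, misses the other lattice points, and has boundary inside the disc $\ball[p][1/3]$, we conclude $f(\cell)\subset\ball[p][1/3]$, which is \eqref{eq:fundamental lemma ball bound}.

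If instead all generating circles map to contractible loops, then $\pushforward{f}$ is trivial and we are in the first case. The Euclidean bound \eqref{eq:fundamental lemma Euclidean metric bound} then follows in both cases. In the ball case it is immediate. In the hyperbolic case, the hyperbolic density of $\latticeComplement$ is doubly periodic and blows up at the lattice points, so it is bounded below by a positive constant. Hence hyperbolic diameter $\order[\ell]{1}$ implies Euclidean diameter $\order[\ell]{1}$.

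The main obstacle is the propagation step: showing that non-contractibility of one generating circle forces the whole skeleton image into one ball. We need to check that the homotopy class is locally constant along the skeleton component, and that the skeleton may have several components (inner and outer circles of annuli) connected through the interior. For the latter, we would argue using the interior homotopies between the components, applying the dichotomy per component and then invoking \Cref{prop: skeleton controls boundary}.
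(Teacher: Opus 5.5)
Your overall architecture matches the paper's. A trivial $\pushforward{f}$ is handled by \Cref{prop: trivial fundamental group}. Otherwise, short non-contractible generating circles are forced into lattice balls by \Cref{lem:shortest loop,prop: short circles}, and \Cref{prop: skeleton controls boundary} spreads the ball bound from the skeleton to $f(\cell)$. The Euclidean bound comes from a lower bound on the hyperbolic density. Your ``bounded open set with boundary in a disc'' argument is a fine substitute for the paper's maximum-modulus step.

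\textbf{The reduction to fibers of type $\disc,\annulus$ fails as written.} The lemma does not assume $f$ definable. Big Picard would rule out essential singularities anyway, but extending $f$ across the puncture does not preserve the hypotheses, because the extension can take a lattice value (or $\infty$) on the puncture locus. For example, $f(z)=cz$ on $\hExt{\puncDisc[1]}{\rho}$ with $\abs{c}$ small maps into $\latticeComplement$, yet its extension has $f(0)=0\in\zIntegers^2$. So you cannot apply the disc case to the extended map, and no single annulus covers a punctured disc.

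The paper exhausts instead. It replaces $\puncDisc[r]$ by $\annulus[r/n][r]$ and $\puncDisc*[r]$ by $\annulus[r][nr]$, giving cells $\cell_n$. Since $\puncDisc,\puncDisc*,\annulus$ use the same Euclidean parameter $\delta=e^{-\pi^2/2\rho}$, we have $\hExt{\cell_n}{\rho}\subset\hExt{\cell}{\rho}$, and the constants do not depend on $n$. Because $f(\cell)=\bigcup_n f(\cell_n)$ is an increasing union, the dichotomy passes to the limit: hyperbolic diameters are monotone, and the ball centers stabilize since the balls are disjoint.

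\textbf{The skeleton step has a false claim and an unfilled hole.} Generating circles in different directions are not homotopic, so the circles in your chain need not map to non-contractible loops. If $f$ depends only on $z_i$, circles in the other directions map to points, and \Cref{lem:shortest loop} says nothing about contractible loops. The fix is to use only direction-$i$ circles. Every $\varZ'\in S$ lies on $S_{\varZ',i}$, which is freely homotopic in $S$ to $S_{\varZ,i}$. Hence $f(S)\subset\bigcup_p\ball[p][1/3]$, and connectedness gives a single ball.

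For several skeleton components, the missing idea is the paper's second case. In your nontrivial case, every component $S$ has a generating circle with non-contractible image, because $\fundamentalGroup(S)\to\fundamentalGroup(\hExt{\cell}{\rho})$ is surjective. (This is not because $\cell$ retracts onto its skeleton, which is false for disc fibers.) So $f(S)\subset\ball[p_S][1/3]$ for each component.

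Now suppose $p_{S_0}\neq p_{S_1}$. Take a loop $\gamma_0\subset S_0$ and a loop $\gamma_1\subset S_1$ freely homotopic to it in $\hExt{\cell}{\rho}$. Winding number about $p_{S_0}$ is a free-homotopy invariant in $\latticeComplement$, and it vanishes for $f(\gamma_1)\subset\ball[p_{S_1}][1/3]$. So $f(\gamma_0)$ is a loop in $\ball[p_{S_0}][1/3]\setminus\{p_{S_0}\}$ with winding number $0$, hence contractible. This makes $\pushforward{f}$ trivial, a contradiction, so all components map into one ball and your final step applies.
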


\begin{proof}
	We may assume that the type of $\cell$ does not contain $\point$ and so $\dim\cell=\ell$.
	Let $\cell_n$ be the cell obtained from $\cell$ by replacing each coordinate of the form $\puncDisc[r]$ by $\annulus[r/n][r]$ and each coordinate of the form $\puncDisc*[r]$ by $\annulus[r][nr]$, where $n\in\nNatural$.
	Clearly, $\cell_n$ also admits a $\hyperbolicParameter{\rho}$-extension.
	Since \eqref{eq:fundamental lemma ball bound}, \eqref{eq:fundamental lemma hyperbolic metric bound} and \eqref{eq:fundamental lemma Euclidean metric bound} depend only on $\ell,\rho$ (and not on $n$), assuming the claim holds for $\cell_n$ instead of $\cell$ for all $n$ yields the result for $\cell$ as well.

	Thus, we may assume that the type of $\cell$ contains only $\disc,\annulus$.
	In particular, we have that the closure $\closure{\cell}$ of $\cell$ in $\cComplex^\ell$ is a compact subset of $\hExt{\cell}{\rho}$.

	We consider three cases.
	In the first two cases, we will show that $\pushforward{f}(\fundamentalGroup(\cell))$ is trivial in $\fundamentalGroup(\latticeComplement)$ and thus the claim follows by \Cref{prop: trivial fundamental group}.

	First, we assume that there exists some component $S$ of $\skeleton{\cell}$ such that $f(S)$ is not contained in $\bigcup_{p\in\zIntegers^2}\ball[p][1/3]$.
	Let $\varZ\in S$ be such that $f(z)\not\in \bigcup_{p\in\zIntegers^2}\ball[p][1/3]$.
	For $\rho=\order{1}$, \Cref{lem:shortest loop,prop: short circles} imply that every generating circle of $S$ through $\varZ$ is contractible.
	The inclusion $\iota:S\embeds\hExt{\cell}{\rho}$ induces a surjective map $\pushforward{\iota}:\fundamentalGroup(S)\to\fundamentalGroup\qty(\hExt{\cell}{\rho})$ and so $\pushforward{f}(\fundamentalGroup(\cell))$ is trivial in~$\fundamentalGroup(\latticeComplement)$.

	Next, we consider the case where there are two components $S_0$ and $S_1$ of $\skeleton{\cell}$ such that $f(S_0)\subset \ball[p_0][1/3]$ and $f(S_1)\subset \ball[p_1][1/3]$ for $p_0\ne p_1 \in \zIntegers^2$.
	For $i\in\qty{0,1}$, any element of $\pushforward{f}(\fundamentalGroup(S_i))$ is a power of the fundamental loop around $p_i$.
	Since every loop in $S_i$ is free-homotopy equivalent to a loop in $S_{1-i}$, we must have that $\pushforward{f}(\fundamentalGroup(S_i))$ is trivial and hence that $\pushforward{f}(\fundamentalGroup(\cell))$ is trivial.

	We are left with the case where $f(\skeleton{\cell})\subset \ball[p][1/3]$ for some $p\in\zIntegers^2$.
	Without loss of generality, we may assume $p=0$.
	Let $\varZ\in f(\closure{\cell})$ be of maximal absolute value.
	Then $\varZ\in\boundary f(\closure{\cell})\subset \boundary f(\cell)$.
	Therefore, by \Cref{prop: skeleton controls boundary}, $\varZ \in f(\skeleton{\cell})\subset \ball[0][1/3]$ and so $f(\cell)\subset \ball[0][1/3]$.

	We now explain how the bound~\eqref{eq:fundamental lemma Euclidean metric bound} on the Euclidean diameter of $f(\cell)$ follows from the bound~\eqref{eq:fundamental lemma hyperbolic metric bound} on the hyperbolic diameter of $f(\cell)$.
	
	Let $\lambda(z)\abs{\dd{z}}$ be the length element for the hyperbolic metric on $\latticeComplement$ at the point $z$.
	The map $\lambda(z)$ is continuous and satisfies $\lambda(z)=\lambda(z+1)=\lambda(z+i)$ for all $z\in\latticeComplement$.
	Therefore $\lambda(z)$ is determined by its values on $[-1,1]^2\setminus\zIntegers^2$.
	As $z$ tends to any point of $\zIntegers^2$, we have that $\lambda(z)\to\infty$ and so there exists some $0<r<1/3$ such that $\lambda(z)=\order*{1}$ when $z\in\bigcup_{p\in\zIntegers^2} \ball[p][r]$.
	We also have $\lambda(z)=\order*{1}$ for $z$ in the compact set $[-1,1]^2\setminus\bigcup_{p\in\zIntegers^2} \ball[p][r]$.
	Therefore $\diam{f(\cell)}[\latticeComplement]=\order[\ell]{\rho}$ implies $\diam{f(\cell)}[\cComplex]=\order[\ell]{\rho}$.
	Since $\rho=\order{1}$, we obtain the bound \eqref{eq:fundamental lemma Euclidean metric bound}.
\end{proof}

\section{\texorpdfstring{Proofs of the \s CPT and \s CPrT}{Proofs of the \#CPT and \#CPrT}}\label{sec: main theorems proofs}
In this section we prove \Cref{thm:cpt,thm:sharp preparation}.
We follow the outline described in \Cref{sec: proof outline}.

\subsection{\for{toc}{Step 1: $\text{\s CPT}_1$}\except{toc}{\texorpdfstring{Step 1: $(\text{\s CPT}_2)_{<\ell} \implies (\text{\s CPT}_1)_{\leq\ell}$}{Step 1}}}\label{sec: step 1 proof}
In this section, we consider analytic hypersurfaces $Z_1,\dots,Z_m\subset\hExt{\cell}{\rho}\odot\ext{\cell[F]}{\gamma}$ in a Weierstrass cell of length $\ell+1$ with gap $\gamma=1-1/\polyfd[\format*][\degree,M]$, such that $\hExt{\cell}{\rho}\odot\ext{\cell[F]}{\gamma},Z_1,\dots,Z_m\in\FD[\format][\degree]$.

If $\cell$ contains $\point$ in its type, then the result follows immediately from $(\text{\s CPT}_1)_{< \ell}$, thus we assume for the rest of the proof that this is not the case.
Let $\pi:\cComplex^{\ell+1}\to\cComplex^\ell$ be the projection given by $\pi(\initial{\varZ}{1}{\ell+1})=\initial{\varZ}{1}{\ell}$.

By first applying the sharp refinement theorem (\Cref{thm:sharp refinement}) to the base~$\cell$ and then pulling back by the resulting cellular cover, we may assume without loss of generality that $\hyperbolicParameter{\rho}=1/2$.

For every pair $Z_i, Z_j$, the union $Z_i\cup Z_j$ is an analytic hypersurface of $\ext{\cell}{1/2}\odot\ext{\cell[F]}{\gamma}$.
We first assume that the restriction of $\pi$ to this union is proper (in the case where $\cell[F]=\disc,\annulus$, this projection is indeed proper as $\closure{Z_i}\cap(\ext{\cell}{1/2}\odot\boundary(\ext{\cell[F]}{\gamma}))=\emptyset$.
See \Cref{sec: proper projections puncture} for the case $\cell[F]=\puncDisc,\puncDisc*$, where the last coordinate of ${Z_i}$ may approach the puncture).
Thus, as in~\cite[Lemma 70]{BinyaminiNovikov2019}, the projection $\pi_{i,j}=\restrict{\pi}{Z_i\cup Z_j}:Z_i\cup Z_j \to \ext{\cell}{1/2}$ is a $\polyfld$-sheeted covering map (in the sense of covering spaces) outside the zero-set of the discriminant $\mathcal{D}_{i,j}$ of the following monic polynomial in $w$:
\begin{equation}\label{eq: interpolating polynomial}
	P_{i,j}(\varZ,w):=\prod_{\eta\in\pi_{i,j}^{-1}(\varZ)}(w-\eta)\qc \varZ\in\ext{\cell}{1/2}.
\end{equation}
The coefficients of $P_{i,j}$, as a polynomial in $w$, continue to holomorphic  functions on $\ext{\cell}{1/2}$, hence $\qty{\mathcal{D}_{i,j}=0}$ is an analytic hypersurface.

This description of $\qty{\mathcal{D}_{i,j}=0}$ is, however, not good enough for our purposes in terms of complexity, since it involves the zero-set of a polynomial in $\polyfld$ (rather than $\order[\format*]{1}$) variables, corresponding to the number of preimages $\eta\in\pi_{i,j}^{-1}(\varZ)$.
Instead, we note that the zero-set of this discriminant is in $\FD*[\format*][\degree]$ since it may also be described as the complement to the set of points ${\varZ\in \pi_{i,j}(Z_i\cup Z_j)}$ for which $\pi_{i,j}$ is locally bijective near each preimage of~$\varZ$.

Since the length of $\cell$ is smaller than that of $\cell\odot\cell[F]$, we may inductively apply $(\text{\s CPT}_2)_{<\ell}$ to $\cell$ and the sets $\qty{\mathcal{D}_{i,j}=0}$ for $1\leq i,j\leq M$.
Let $\{f_k:\ext{\cell_k}{1/2}\to\ext{\cell}{1/2}\}$ denote the resulting cellular cover.
This cover is of size $\polyfd[\format*][\degree, M]$ and each function $f_k$ is in $\FD*[\format*][\degree]$.

If $f_k(\ext{\cell_k}{1/2})$ is contained in one the sets $\qty{\mathcal{D}_{i,j}=0}$, then, since cellular maps preserve dimension, we have $\dim \cell_k < \dim \cell$ and so $\dim (\cell_k \odot \pullback{f}_k\cell[F]) < \dim (\cell\odot\cell[F])$.
In this case, we may proceed inductively by applying $\text{\s CPT}_2$ with respect to the sets ${(f_k\odot \idmap)}^{-1}(Z_i)$ (see \Cref{def: odot id} for the notation $\pullback{f}_k\cell[F]$ and $f_k\odot \idmap$).

It remains to consider the case where $f_k(\ext{\cell_k}{1/2})$ is disjoint from all the sets $\qty{\mathcal{D}_{i,j}=0}$.
Pulling back by $(f_k\odot\idmap)$ and renaming, we may assume that $\pi$ restricted to $Z_1\cup\cdots\cup Z_M$ is a proper unramified covering map --- indeed, any ramification point is contained in the ramification locus of some pair $Z_i \cup Z_j$.

We may now proceed as in the proof the CPT in~\cite[Section 8.1]{BinyaminiNovikov2019}, using the notation of~\cite{BinyaminiNovikov2019}.
We apply the clustering constructions of~\cite[Section 6.3]{BinyaminiNovikov2019} with gap parameter $\gamma$ to the sections of the restriction of $\pi$ to the union ${Z_1\cup\cdots\cup Z_M}$ (instead of a single given surface as in~\cite{BinyaminiNovikov2019}).
Since the number of sections is $\polyfd[\format*][\degree,M]$, our assumption that $\gamma=1-1/\polyfd[\format*][\degree,M]$ is sufficient in order to carry out the clustering constructions with the desired bounds on the sizes and complexities of the resulting cellular covers.

Furthermore, since the restriction of $\pi$ to $Z_1\cup\dots\cup Z_M$ is unramified, each of the sections of this projection is contained in only one of the sets $Z_i$.
Thus, by \Cref{rem: complexity of component}, these sections (as well as their univalued pullbacks to the appropriate $\nu$-cover, see~\cite[Definition 23]{BinyaminiNovikov2019}) are in $\FD*[\format*][\degree]$ and so the resulting cells and cellular maps are also of the desired complexity.
Here we use the assumption that $\qty{\FD[\format][\degree]}$ has \s CD, noting that each of the sections of $\pi$ as above is a connected component of a set in~$\FD*[\format*][\degree]$.

\subsubsection{Proper projections near a puncture}
\label{sec: proper projections puncture}

Continuing with the same notation as in \Cref{sec: step 1 proof} and assuming $\cell[F]=\puncDisc,\puncDisc*$, it remains to explain why we may reduce to the case where the projections $\pi:Z_i\to\ext{\cell}{1/2}$ are proper.
We may assume $\cell[F]=\puncDisc[1]$ by inverting and rescaling the last coordinate (see \Cref{rem: normalized cells}).

We will use the following ``definable Remmert--Stein'' theorem.
We note that, in comparison with the classical formulation of the theorem, there is no assumption regarding the dimensions of the relevant sets.
\begin{theorem}[{\cite[Theorem 4.13]{PeterzilStarchenko2008}}]
\label{thm: Remmert--Stein}
Let $U\subset\cComplex^\ell$ be a definable domain.
Let $A\subset U$ and $Z\subset U\setminus A$ be relatively closed subsets which are locally given as the zero-sets of definable holomorphic functions.
Then the closure of $Z$ in $U$ is an analytic subset.
\end{theorem}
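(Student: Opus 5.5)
The plan is to reduce to Bishop's extension theorem. That theorem says: if $A\subset U$ is an analytic subset and $Z\subset U\setminus A$ is an analytic subset of pure dimension $k$ with locally finite $2k$-dimensional volume near every point of $A$, then $\closure{Z}$ is analytic in $U$. By hypothesis, $A$ is relatively closed and locally the zero-set of holomorphic functions, so it is an analytic subset of $U$. It therefore remains to arrange pure dimensionality of $Z$ and to establish the volume condition. Definability will be used for both.

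\emph{Step 1: reduction to pure dimension.} Decompose $Z=\bigcup_{k}Z^{(k)}$, where $Z^{(k)}$ is the union of the $k$-dimensional irreducible components of $Z$. Each $Z^{(k)}$ is definable: it is the closure in $U\setminus A$ of the set of regular points of $Z$ at which $Z$ has local dimension $k$, and this set is first-order definable. Each $Z^{(k)}$ is also analytic in $U\setminus A$ and of pure dimension $k$. Only the values $0\le k\le \ell-1$ occur, since if $Z$ has a component of dimension $\ell$ then that component is open and closed in the connected set $U\setminus A$ (or $A=U$, which is trivial). Since $\closure{Z}=\bigcup_k\closure{Z^{(k)}}$ is a finite union, it suffices to treat each $Z^{(k)}$ separately.

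\emph{Step 2: local finiteness of volume.} For $Z^{(k)}$ pure $k$-dimensional and definable, I would use the Cauchy--Crofton formula. Up to a constant, the $2k$-volume of $Z^{(k)}\cap K$ for a compact ball $K$ equals the average, over affine complex $(\ell-k)$-planes $L$ meeting $K$, of $\#(Z^{(k)}\cap K\cap L)$. By o-minimality (uniform finiteness in the definable family parametrized by $L$), for generic $L$ the number of isolated points of $Z^{(k)}\cap L$ is bounded uniformly in $L$. Generic $L$ meet a pure $k$-dimensional set in isolated points, so the average is bounded and the volume of $Z^{(k)}$ in every compact subset of $U$ is finite. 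In particular the volume is finite near every point of $A$.

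\emph{Step 3.} Bishop's theorem now gives that $\closure{Z^{(k)}}$ is analytic in $U$ for each $k$, and hence so is $\closure{Z}$.

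I expect Step 2 to be the main obstacle, specifically making the Crofton argument rigorous. One must check that generic slices are transverse, so that the count with multiplicity equals the point count, and that uniform finiteness applies to the fibres of the definable family $\{(L,z): z\in Z^{(k)}\cap L\}$. If this proves awkward, a fallback is to use a definable triangulation or cell decomposition of $Z^{(k)}\cap K$: each cell of dimension $2k$ is the graph of a definable map over an open set in a coordinate $2k$-plane, and its volume can be bounded by projecting it onto the coordinate $2k$-planes with finite fibres. This would replace Crofton by an o-minimal bound on the fibre cardinalities of these projections.
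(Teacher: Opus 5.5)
Your argument is correct. The paper has no proof of this statement; it is quoted from Peterzil--Starchenko, so there is no in-paper argument to compare against. What you give is the natural self-contained proof over $\rReal$. Bishop's extension theorem reduces everything to local finiteness of $\mathcal{H}^{2k}(Z^{(k)})$ near $A$, and that is the o-minimal fact that a bounded definable set of dimension $d$ has finite $d$-dimensional Hausdorff measure. The price is that you import Bishop's theorem and rely on Hausdorff measure, so the argument is not internal to definable complex analysis. The gain is that no comparison between $\dim A$ and $\dim Z$ is ever needed, which is the feature the paper points out when quoting the result: definability replaces the dimension hypothesis of classical Remmert--Stein by a volume bound.

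A few points should be made explicit.

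\emph{Definability of $Z$.} You use that $Z$ (and $A$) is definable as a set, not merely locally cut out by definable functions. The statement has to be read this way: $U=\cComplex$, $A=\{0\}$, $Z=\{1/n : n\geq 1\}$ satisfies the local hypothesis, but $\closure{Z}$ is not analytic.

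\emph{Definability of $Z^{(k)}$.} Justify that the regular locus is first-order definable. For instance, take the set of points near which $Z$ is the graph of a holomorphic map over an open subset of a complex coordinate $k$-plane; holomorphy of a definable map is a first-order condition. The local complex dimension is half the o-minimal local dimension.

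\emph{Step 2.} The obstacle you anticipate is milder than you fear. The Crofton formula counts points with plain cardinality and involves no multiplicities. You may use either Federer's real version for $2k$-rectifiable sets, or the unitary-invariant complex version applied to the regular part, whose complement is $\mathcal{H}^{2k}$-null. Either way, no transversality check is needed. The set of planes $L$ for which $Z^{(k)}\cap V\cap L$ is infinite is definable of smaller dimension, by a fibre-dimension count on the incidence set $\{(z,L): z\in Z^{(k)}\cap V\cap L\}$, hence null. On its complement the fibres are zero-dimensional definable sets, so uniform finiteness bounds the integrand by a constant $N$.

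\emph{Fallback.} Your fallback also works, provided you sum over all coordinate $2k$-planes rather than projecting a cell onto a single one. By Cauchy--Binet, the Jacobians of the coordinate projections restricted to any $2k$-plane satisfy $\sum_I |J_I|\geq 1$. The area formula then gives $\mathcal{H}^{2k}(M)\leq\sum_I\int\#(M\cap\pi_I^{-1}(y))\,dy$, which is finite by uniform finiteness and boundedness.
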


In order to apply \Cref{thm: Remmert--Stein}, we need the following lemma.
\begin{lemma}
	\label{lem: definable hypersurface}
	Let $U\subset \cComplex^{\ell+1}$ be a domain and let $Z\subset U$ be a definable analytic hypersurface.
	Then $Z$ is locally given as the zero-set of a definable holomorphic function.
\end{lemma}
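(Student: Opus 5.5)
Our plan is to build the local defining function as a Weierstrass polynomial whose coefficients are symmetric functions of the roots, since those are expressible by a first-order formula. Fix a point $p\in Z$; since the statement is local we may shrink $U$ freely. After a linear change of coordinates (definable, being linear) we may assume that $Z$ does not contain the line through $p$ in the $z_{\ell+1}$-direction near $p$. This is possible because $Z$ is a hypersurface, so it is locally the zero-set of a holomorphic $g\not\equiv 0$, and a generic direction satisfies $g(p+t e)\not\equiv 0$ in $t$.

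Next we choose a small polydisc $\Delta'\times\Delta_r$ centered at $p$ with $Z\cap(\closure{\Delta'}\times\boundary\Delta_r)=\emptyset$. Then the projection $\pi:Z\cap(\Delta'\times\Delta_r)\to\Delta'$ is proper, and it is a finite branched covering with some number of sheets $d$ counted with multiplicity. The multiplicities are the delicate point, since $Z$ as a set only records the reduced structure. We will therefore work with the reduced hypersurface. Over the complement of a proper analytic subset $B\subset\Delta'$, the fiber $\pi^{-1}(\varZ)$ consists of exactly $d$ distinct points $\eta_1(\varZ),\dots,\eta_d(\varZ)$, where $d$ is the generic fiber cardinality. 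Here $B$ is definable: it is the set of $\varZ$ whose fiber has fewer than $d$ points, and $d$ itself is definable as the maximal fiber cardinality by uniform finiteness.

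We then define
\begin{equation}
	P(\varZ,w)=\prod_{k=1}^d(w-\eta_k(\varZ)),\qquad \varZ\in\Delta'\setminus B.
\end{equation}
Each elementary symmetric function $\sigma_j(\varZ)$ of the roots is definable on $\Delta'\setminus B$. Its graph is given by a first-order formula saying "the fiber has exactly $d$ points and $\sigma_j$ equals the $j$-th symmetric function of them", and this can be written by quantifying over $d$-tuples with $d$ fixed. Each $\sigma_j$ is holomorphic there, because locally the $\eta_k$ are holomorphic by the implicit function theorem, as the reduced $Z$ is smooth and transverse to the fibers generically. The $\sigma_j$ are also bounded by $r^j\binom{d}{j}$. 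By the Riemann extension theorem, they extend holomorphically across $B$. The extension remains definable, since it is the continuous extension, i.e. the closure of the graph intersected with the graph of a function, which is a first-order condition.

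Then $P$ is a definable holomorphic function on $\Delta'\times\Delta_r$, monic in $w$. It vanishes exactly on $Z\cap(\Delta'\times\Delta_r)$: off $B$ this holds by construction, and over $B$ it follows by continuity together with properness, since roots of $P$ are limits of points of $Z$ and $Z$ is closed. Conversely, points of $Z$ over $B$ are limits of points of $Z$ over $\Delta'\setminus B$ because $B$ is nowhere dense in the image and $Z$ has pure dimension $\ell$.

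The main obstacle is making sure the linear change of coordinates and the polydisc can be chosen so that the properness statement holds. We intend to obtain this from classical Weierstrass preparation applied to the (possibly non-definable) local defining function $g$, and only afterwards replace it by the definable $P$. Definability of $P$ never uses $g$; only the existence of the good polydisc does.
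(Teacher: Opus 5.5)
Your proposal is correct and follows essentially the same route as the paper: both realize $Z$ locally as the zero-set of a Weierstrass polynomial in $z_{\ell+1}$, whose coefficients are the elementary symmetric functions of the $k$ pairwise distinct points in the fibers, defined by a first-order formula over $k$-tuples and then extended by taking the closure of the graph. The only difference is that the paper takes the existence of the Weierstrass polynomial from Chirka and merely identifies its coefficients, while you construct the reduced polynomial directly via the Riemann extension theorem; this makes explicit two points the paper's formula uses tacitly, namely that the polynomial must be reduced and that the roots must be restricted to the disc $\Delta_r$.
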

\begin{proof}
	Up to a unitary change of coordinates (which is, in particular, semialgebraic and hence definable), we may locally express $Z$ as the zero-set of a Weierstrass polynomial $P(\initial{\varZ}{1}{\ell};\varZ_{\ell+1})$ of some degree $k$ with respect to $\varZ_{\ell+1}$, defined in a region $V\times\cComplex$ for some open polydisc $V\subset\cComplex^\ell$ (see~\cite[Theorem 2.8]{Chirka1989}).
	We will show that the coefficients of $P(\initial{\varZ}{1}{\ell};\varZ_{\ell+1})$ are definable functions of $\initial{\varZ}{1}{\ell}\in V$.

	Consider the following definable subset of $\cComplex^{\ell+k}$, with coordinates $(\initial{\varZ}{1}{\ell},\initial{\varW}{1}{k})$:
	\begin{equation}
		\label{eq: k section space}
		\qty(\bigcap_{i=1}^k \qty{(\initial{\varZ}{1}{\ell},\varW_i)\in Z})
		\setminus \qty(\bigcup_{i\neq j}\qty{\varW_i=\varW_j}).
	\end{equation}
	The graphs of the coefficients of $P(\initial{\varZ}{1}{\ell};\varZ_{\ell+1})$, as functions of $\initial{\varZ}{1}{\ell}\in V$, are given by applying elementary symmetric polynomials to the last $k$ coordinates of \eqref{eq: k section space}, taking the topological closure, and then intersecting with $V\times\cComplex$.
\end{proof}

Let $\{Z_i\}$ be the analytic hypersurfaces of $\hExt{\cell}{\rho}\odot\ext{\cell[F]}{\gamma}$ as in \Cref{sec: step 1 proof}.
Considering $\ext{\cell}{1/2}\odot\puncDisc[1]$ as the set difference of $\ext{\cell}{1/2}\odot\disc[1]$ and $\ext{\cell}{1/2}\odot\point$, \Cref{lem: definable hypersurface,thm: Remmert--Stein} imply that the closure of each $Z_i$ in $\ext{\cell}{1/2}\odot\disc[1]$ is an analytic subset.
It is of the same dimension as $Z_i$ by~\cite[Theorem 3.22 and Proposition 3.17]{Coste1999}.
These closures are all in $\FD*[\format*][\degree]$.

We may now repeat the arguments of \Cref{sec: step 1 proof} with $\cell[F]$ replaced by $\disc[1]$ and each $Z_i$ replaced by its appropriate closure.
Without loss of generality, we may assume that one of the $Z_i$ is equal to $\ext{\cell}{1/2}\odot\point$.
After reducing to the case where the projection $\pi:\bigcup Z_i\to\ext{\cell}{1/2}$ is a proper unramified cover, we may return (by rescaling and inverting as needed) to the original value of the fiber~$\cell[F]$.
In this case the projection $\pi:\bigcup Z_i\to\ext{\cell}{1/2}$ is still a proper unramified cover and we may proceed with the clustering constructions of~\cite[Section 6.3]{BinyaminiNovikov2019}.

\subsection{\for{toc}{Step 2: $\text{\s CPT}_2$}\except{toc}{\texorpdfstring{Step 2: $(\text{\s CPT}_2)_{<\ell} + (\text{\s CPT}_1)_{\leq\ell}\implies (\text{\s CPT}_2)_{\leq\ell}$}{Step 2}}}\label{sec: step 2 proof}

This is the main part of the argument.
We fix the following notation and assumptions for the rest of this section.
Let $Z_1,\dots,Z_M$ be analytic hypersurfaces of a complex cell $\hExt*{\cell\odot\cell[F]}{\rho}$ of length $\ell+1$ and assume that $\hExt*{\cell\odot\cell[F]}{\rho},Z_1,\dots,Z_m\in\FD[\format][\degree]$.

If $\cell\odot\cell[F]$ contains $\point$ in its type, then the result follows  immediately from $(\text{\s CPT}_2)_{< \ell}$.
Thus, we may assume for the rest of the proof that this is not the case.

We will give the details in the case $\cell[F]=\annulus$, the other cases are similar.
Rescaling $\cell[F]$ by its outer radius, we may assume that $\cell[F]=\annulus[r][1]$, where $r$ is a holomorphic function on $\hExt{\cell}{\rho}$ satisfying $0<\abs{r(\varZ)}<1$ for all $\varZ\in\hExt{\cell}{\rho}$.

Let $\projfiber$ and $\projbase$ be the projection maps given by $\projfiber(\initial{\varZ}{1}{\ell+1})=z_{\ell+1}$ and $\projbase(\initial{\varZ}{1}{\ell+1})=\initial{\varZ}{1}{\ell}$, for $\initial{\varZ}{1}{\ell+1}\in\cComplex^{\ell+1}$.
We write $\operatorname{abs}:\cComplex\to\rReal$ for the absolute value map.

By first applying the sharp refinement theorem and pulling back by the resulting cellular maps, we may assume that $\hyperbolicParameter{\rho}$ is equal to some constant.
For convenience, we will take $\hyperbolicParameter{\rho}=1/6$.

\subsubsection{\texorpdfstring{Removing an $\epsilon$-net}{Removing an ε-net}}\label{sec: removing an epsilon net}
Let $0<\epsilon<1$, whose value will be determined later.
In this section, we reduce to the case where each of the projections $\pi_{\ell+1}(Z_i)$ does not intersect the set $\epsilon\zIntegers^2$  in $\annulus[1][6]$, i.e.\ near the outer, normalized boundary component of $\cell[F]$.
To treat the other boundary component, we may invert the last coordinate, normalize the (new) outer radius of $\cell[F]$ and repeat the arguments given below before applying another inversion and normalization to return to the original coordinates.

Let $p\in\epsilon\zIntegers^2 \cap \annulus[1][6]$ and let $\Gamma_p$ be the graph of the constant function of value $p$ over $\ext{\cell}{1/6}$.
The intersections $Z_i \cap \Gamma_p$ are all analytic and, up to a small perturbation of $\epsilon$, are either empty or of complex codimension $2$ in $\ext*{\cell\odot\cell[F]}{1/6}$.
The restriction of $\projbase$ to each $Z_i \cap \Gamma_p$ is proper and so the image $\projbase(Z_i \cap \Gamma_p)$, if not empty, is an analytic subset of complex codimension $1$ in $\ext{\cell}{1/6}$ (this may also be seen, for example, by plugging in $p$ in the last variable of the local defining functions of $Z_i$).

We apply $(\text{\s CPT}_2)_{<\ell}$ to $\ext{\cell}{1/6}$ to obtain a cellular cover by cells admitting $1/6$-extensions which is compatible with all the sets $\projbase(Z_i \cap \Gamma_p)$.
The size of this cover is $\polyfd[\format*][\degree,\epsilon^{-1}, M]$ and the corresponding cells and cellular maps are in $\FD*[\format*][\degree]$ (crucially, their complexity does not depend on $\epsilon$).
The dependence on $\epsilon$ in the size of the cover follows from the fact that there are $\poly(\epsilon^{-1})$-many possible values of $p\in\epsilon\zIntegers^2 \cap \annulus[1][6]$.

The cells of this cover are either of smaller dimension than that of $\cell$, in which case we proceed by induction on dimension as in Step 1; or else their images are disjoint from the sets $\projbase(Z_i \cap \Gamma_p)$.

Pulling back by a cellular map in this cover, we may assume that each $\projfiber(Z_i)$ does not intersect $\epsilon\zIntegers^2 \cap \annulus[1][6]$ as long as we eventually choose $\epsilon$ such that ${\epsilon^{-1}=\poly_{\format*}(D,M)}$. 
This constraint is needed in order to get the correct bounds on the size of the resulting cellular cover.

\subsubsection{Radial bound}
\label{sec: RB}
We consider the following bound on the diameter of an analytic hypersurface in the radial direction of the last coordinate.

\begin{proposition}[Radial bound, RB]
	\label{prop: RB}
	Let $\ext{\cell}{1/6}$ be a cell of length $\ell$ whose type does not contain~$\point$.
	Let $Z\subset\ext{\cell}{1/6}\odot\annulus[1][6]$ be an analytic hypersurface such that $\ext{\cell}{1/6},Z\in\FD[\format][\degree]$ and such that the projection $\projfiber(Z)$ does not intersect $\epsilon\zIntegers^2$.
	Then for each connected component $Z_0$ of $Z\cap\qty(\ext{\cell}{1/3}\odot\annulus[2][3])$, we have
	\begin{equation}
		\label{eq: radial bound}
		\diam{\operatorname{abs}(\projfiber(Z_0))}[\rReal]<\epsilon\cdot\polyfld.
	\end{equation}
\end{proposition}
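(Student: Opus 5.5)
We follow the outline of \Cref{sec: proof outline}, and the proof proceeds in three reductions.

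\textbf{Reduction to discs.} First we reduce to the case where the type of $\cell$ contains only $\disc,\annulus$. Each coordinate of type $\puncDisc$ or $\puncDisc*$ is exhausted by annuli $\annulus[r/n][r]$ (respectively $\annulus[r][nr]$), exactly as in the proof of the fundamental lemma. The bound \eqref{eq: radial bound} is independent of $n$, and every connected component $Z_0$ is an increasing union of components of the truncated sets. Hence it suffices to treat cells of type $\disc,\annulus$. These truncated cells are not of bounded complexity, but they only enter through the final covering count, so we must check that the number of balls below does not depend on $n$.

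Next we reduce to the case where $\cell$ is a cellular polydisc, via a maximum-modulus argument. Fix a component $Z_0$. We consider the continuous function $\abs{z_{\ell+1}}$ on $Z_0$, or rather on the fibers of $Z$ over the base. For fixed $z_{\ell+1}$ the slice is a hypersurface in the base, so we argue through the projection $\projbase$ restricted to $Z\cap\{\abs{z_{\ell+1}}\in[2,3]\}$. Using \Cref{prop: skeleton controls boundary} applied to local parameterizations, the extrema of $\abs{z_{\ell+1}}$ over $Z_0$ are approached either on the boundary of the fiber annulus $\annulus[2][3]$, or over $\skeleton{\ext{\cell}{1/3}}$.

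The first case is the trivial bound, so we cannot use it blindly. Instead we note that the quantity $\diam{\operatorname{abs}(\projfiber(Z_0))}[\rReal]$ is controlled by chaining. Any two points of $Z_0$ can be joined inside $Z_0$ through pieces lying over neighborhoods of the skeleton. We cover $\skeleton{\ext{\cell}{1/3}}$ by $\order[\ell]{1}$ cellular polydiscs whose $1/6$-extensions lie in $\ext{\cell}{1/6}$, as in the proof of \Cref{thm:sharp refinement}. The number of connected pieces involved in a chain is bounded by $\polyfld$ using \Cref{prop: so minimality connected component bound}.

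\textbf{Polydisc case.} We normalize so that $\cell=\disc[1]^{\times\ell}$. We cover $\ext{\cell}{1/3}\odot\annulus[2][3]$ by $\polyfld$ Euclidean balls $\ball$ of radius $1/\polyfld$, chosen so that the $\delta$-extensions $\ext{\ball}{\delta}$ lie in $\ext{\cell}{1/6}\odot\annulus[1][6]$, with $\delta$ taken from \Cref{prop: Weierstrass polydiscs}.

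On each ball we apply \Cref{prop: Weierstrass polydiscs} to get a unitary $U$ and a Weierstrass polydisc $\polydisc$ for $U^{-1}(Z)$ with gap $1-1/\polyfld$. We then apply $(\text{\s CPT}_1)_{\leq\ell}$ to $\polydisc$ and $U^{-1}(Z)$. This gives $\polyfld$ cellular maps $f_j:\hExt{\cell_j}{\sigma}\to\polydisc$, with $\sigma=\order{1}$ small enough for \Cref{lem:fundamental lemma}, whose images cover $U^{-1}(Z)\cap\polydisc$ and are contained in it.

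Each composition $\projfiber\comp U\comp f_j$ is holomorphic into $\latticeComplement[\epsilon]$. By \Cref{lem:fundamental lemma}, each image $\projfiber(U f_j(\cell_j))$ has Euclidean diameter $\order[\ell]{\epsilon}$. Since $Z_0$ is connected, it is covered by a chain of these $\polyfld$ pieces. The diameter of $\projfiber(Z_0)$, and hence of its absolute value, is therefore at most $\epsilon\cdot\polyfld$.

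\textbf{Main obstacle.} I expect the main obstacle to be the skeleton reduction. We must justify that the radial spread of a connected component $Z_0$ is witnessed by pieces over the skeleton without losing connectivity or complexity control. The plan is to apply \Cref{prop: skeleton controls boundary} fiberwise to the branches of $Z$, seen as multivalued sections over the base away from a discriminant, and to take the discriminant locus into account using its lower dimension.
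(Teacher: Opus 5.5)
\textbf{Verdict.} Your overall architecture matches the paper's: exhaust $\puncDisc$, $\puncDisc*$ fibers by annuli, reduce via the skeleton to cellular polydiscs, and in the polydisc case combine a ball cover, \Cref{prop: Weierstrass polydiscs}, $(\text{\s CPT}_1)_{\leq\ell}$ and \Cref{lem:fundamental lemma}. Your polydisc case is essentially the paper's $\text{RB}_1$ argument. The genuine gap is the skeleton reduction. You flag it as the main obstacle but do not resolve it, and the mechanism you propose there is false.

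\textbf{Why the chaining fails.} Two points of $Z_0$ cannot in general be joined through pieces lying over neighbourhoods of $\skeleton{\ext{\cell}{1/3}}$. Take $\ell=1$, $\cell=\annulus[1/2][2]$ and $Z=\{z_2=z_1\}$. The component $Z_0=\{(z,z):2<\abs{z}<3\}$ has no point at all over the skeleton $\{\abs{z_1}=1/6\}\cup\{\abs{z_1}=6\}$, yet $\operatorname{abs}(\projfiber(Z_0))=(2,3)$. With $Z=\{z_2^9=3200z_1\}$ instead, the minimum and maximum of $\abs{z_2}$ on $\closure{Z_0}$ are attained over the two different components $\abs{z_1}=1/6$ and $\abs{z_1}=6$ of the skeleton, and nothing joins them near the skeleton. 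So knowing that extrema, or boundary points of $\projfiber(\closure{Z_0})$, lie over the skeleton does not bound the radial diameter by the contributions of the skeleton pieces. This holds whether you use \Cref{prop: skeleton controls boundary} on branches away from a discriminant or anything else. In both examples the $\epsilon$-net hypothesis fails, and that is the point. This part of the spread can only be controlled by that hypothesis, which your reduction step never invokes; the polydisc pieces only see neighbourhoods of the skeleton.

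\textbf{The missing idea.} You need to separate the spread seen by the boundary of the projection from the spread carried by full annuli. The paper proves \Cref{lem: skeleton controls boundary analytic set} by a \Rouche{} argument: every $w\in\boundary\projfiber(\closure{Z_0})$ with $\abs{w}\neq 2,3$ lies under a point of $\closure{Z_0}$ whose base coordinates are in the skeleton. For each $x\in\operatorname{abs}(\projfiber(\closure{Z_0}))$, either $\circle[x]\subset\projfiber(\closure{Z_0})$ or $\circle[x]$ meets $\boundary\projfiber(\closure{Z_0})$. Hence this interval is covered by the sets $\operatorname{abs}(\Gamma_i)$ and $\operatorname{abs}(\Gamma^*_i)$. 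Here the $\Gamma_i$ are the $\polyfld$ components of $\boundary\projfiber(\closure{Z_0})$, and the $\Gamma^*_i$ are the $\polyfld$ components of $\{z:\circle[\abs{z}]\subset\projfiber(\closure{Z_0})\}$.

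\textbf{Bounding the two kinds of pieces.} Each $\Gamma^*_i$ is an annulus disjoint from $\epsilon\zIntegers^2$, hence of width $\order{\epsilon}$. This is where the hypothesis is used a second time, and it is exactly what excludes the examples above. Each $\Gamma_i$ is connected and covered by two kinds of sets: the projections of the closures of the components $Y_{j,k}$ of $Z\cap(\ext{\polydisc_j}{1/3}\odot\annulus[2][3])$, for the $\order[\ell]{1}$ polydiscs $\polydisc_j$ covering the skeleton, and the circles $\abs{w}=2,3$. These circles contribute single points to $\operatorname{abs}(\Gamma_i)$, which also disposes of your worry about extrema on $\boundary\annulus[2][3]$. $\text{RB}_1$ bounds the rest. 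Connectivity is used only within each $\Gamma_i$ and for the final interval, never across components of the skeleton.

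\textbf{A small correction to your first reduction.} The cells $\cell_n$ \emph{do} have complexity independent of $n$. The radii $r/n$ and $nr$ cost only a degree-one polynomial relation, and $\ext{\cell_n}{1/6}\subset\ext{\cell}{1/6}$ uses restrictions of the original radii. This is needed, because the complexity of the cell enters through component counts and the choice of $\gamma$, not only through the number of balls.
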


We first show how to use \eqref{eq: radial bound} to finish the proof of $\text{\s CPT}_2$ and then outline the proof of \Cref{prop: RB}, which is carried out in the subsequent~sections.

\begin{proof}[Proof of $\text{\s CPT}_2$ from \Cref{prop: RB}]
	Let $\Sigma\subset\rReal$ be given by
	\begin{equation}
		\Sigma = \operatorname{abs}\bigg({\projfiber}\Big(\big(\bigcup_{i}Z_{i}\big)\cap(\ext{\cell}{1/3}\odot\annulus[2][3])\Big)\bigg).
	\end{equation}
	For each $i$, the number of connected components of the intersection of $Z_i$ with the cell $\ext{\cell}{1/3}\odot\annulus[2][3]$ is $N=\polyfld$.
	Therefore, by \eqref{eq: radial bound}, we have that $\Sigma$ is the union of at most $NM$ intervals contained in the interval $(2,3)$, whose total length is at most $\epsilon N M \polyfld$.
	For $\epsilon^{-1}=\poly_{\format*}(D,M)$, this total length is less than $1/3$.

	In this case it is easy to see that there exists some constant $r_1\in(2,3)$ such that the interval $(r_1,r_1+\frac{1}{3NM})\subset(2,3)$  does not intersect $\Sigma$.

	Let
	\begin{equation}
		\gamma 	= \frac{r_1}{r_1+\frac{1}{3NM}}
				= 1 - \frac{1}{\polyfd[\format*][\degree,M]},
		\end{equation} 
	then $\ext{\cell}{1/3}\odot\annulus[r_1][r_1/\gamma]$ does not intersect $\bigcup_i Z_i$.

	By inverting the last coordinate and repeating this construction, we may assume that $\gamma$ is such that there exists a constant $r_0\in(1/3,1/2)$ for which $\ext{\cell}{1/3}\odot\annulus[\gamma\cdot r r_0 ][r r_0 ]$ does not intersect $\bigcup_i Z_i$.
	We may now apply $\text{\s CPT}_1$ to the cell $\ext{\cell}{1/3}\odot\ext{\annulus[rr_0][r_1]}{\gamma}$ to finish the proof of $\text{\s CPT}_2$.
\end{proof}

For the proof of \Cref{prop: RB}, we consider the following two variants.
\begin{enumerate}[leftmargin=3\parindent,align=right]
	\item[$(\text{RB}_1)$] \Cref{prop: RB} when the type of $\cell$ contains only $\disc$.
	\item[$(\text{RB}_2)$] \Cref{prop: RB} when the type of $\cell$ contains only $\disc, \annulus$.
\end{enumerate}
In the subsequent sections we will prove that $\text{RB}_{2}$ implies \Cref{prop: RB}, that $\text{RB}_{1}$ implies $\text{RB}_{2}$ and, finally, establish $\text{RB}_{1}$.

\subsubsection{\texorpdfstring{$\text{RB}_2\implies \text{RB}$}{RB2 implies RB}}

In this section, we reduce \Cref{prop: RB} to the case where the type of $\cell$ does not contain $\puncDisc$ or $\puncDisc*$.
Let $\cell$ be as in \Cref{prop: RB} and let $\cell_n$ be the cell obtained from $\cell$ by replacing each coordinate of the form $\puncDisc[r]$ by $\annulus[r/n][r]$ and each coordinate of the form $\puncDisc*[r]$ by $\annulus[r][nr]$, where $n\in\nNatural$.
Clearly, $\cell_n$ also admits a $1/6$-extension which is in $\FD*[\format*][\degree]$ (in particular, its complexity does not depend on $n$.
Cf.\ \Cref{rem: extension complexity}).

Let $c:[0,1]\to Z_0$ be a continuous path.
Since $c([0,1])$ is a compact subset of $\ext{\cell}{1/3}\odot\annulus[2][3]$, it is contained in $\ext{\cell_n}{1/3}\odot\annulus[2][3]$ for all sufficiently large $n$.
Fixing such an $n$, let $Z_{0}'$ be a connected component of ${Z\cap(\ext{\cell_n}{1/3}\odot\annulus[2][3]})$ containing $c([0,1])$.
Assuming that \Cref{prop: RB} holds for $\cell_n$ in place of $\cell$, we have that the distance between $(\operatorname{abs}\comp\,\projfiber\comp c)(0)$ and $(\operatorname{abs}\comp\,\projfiber\comp c)(1)$ is at most $\diam{\operatorname{abs}(\projfiber(Z_0'))}[\rReal]$ and hence bounded by $\epsilon\cdot\polyfld$.
We deduce \Cref{prop: RB} for the original cell $\cell$ by taking the supremum over all such paths~$c$.

\subsubsection{\texorpdfstring{$\text{RB}_1\implies \text{RB}_2$}{RB1 implies RB2}}
\label{sec: reduce to only discs}
Assuming that the type of $\cell$ contains only $\disc,\annulus$, we first show the following lemma, similar to \Cref{prop: skeleton controls boundary}.
See \Cref{def: skeleton} for the notion of a skeleton.
\begin{lemma}
	\label{lem: skeleton controls boundary analytic set}
	Let $w \in \boundary(\projfiber(\closure{Z_0}))$, where $\closure{Z_0}$ is the closure of $Z_0$ in $\cComplex^{\ell+1}$.
	Then there is $\varZ \in \closure{Z_0}$ such that $\varZ_{\ell+1}=w$ and such that $\initial{\varZ}{1}{\ell}\in \skeleton{\ext{\cell}{1/3}}$, unless $\abs{w}=2,3$.
\end{lemma}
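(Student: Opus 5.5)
The plan is a maximum-modulus / open-mapping argument in the base variables, run by induction on the coordinates of the base, in the same spirit as the proof of \Cref{prop: skeleton controls boundary}. Fix $w\in\boundary(\projfiber(\closure{Z_0}))$ with $\abs{w}\neq 2,3$. Since $\closure{Z_0}$ is compact and $\projfiber$ is continuous, $\projfiber(\closure{Z_0})$ is closed. Hence $w=\projfiber(\varZ^0)$ for some $\varZ^0\in\closure{Z_0}$. Because $\abs{w}\in(2,3)$ strictly, such a point lies in $\closure{\ext{\cell}{1/3}}\odot\annulus[2][3]$, and therefore in $Z$ (which is closed in the larger cell $\ext{\cell}{1/6}\odot\annulus[1][6]$). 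Consider the set
\begin{equation*}
	K_w=\closure{Z_0}\cap\{z_{\ell+1}=w\},
\end{equation*}
a nonempty compact set. We want to find a point of $K_w$ whose base coordinates lie in $\skeleton{\ext{\cell}{1/3}}$.

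The key step is to show that, among points of $K_w$, we may push the first base coordinate to the boundary of its fiber, then the second, and so on. To do this, choose $\varZ\in K_w$ maximizing a suitable exhaustion of the first coordinate. For a disc fiber, take $\abs{z_1}$. For an annulus fiber, take $\abs{z_1}$, or $-\abs{z_1}$, and treat the two boundary circles. Suppose $z_1$ lies in the open fiber of $\ext{\cell}{1/3}$. Near $\varZ$, the analytic hypersurface $Z$ is cut by $\{z_{\ell+1}=w\}$ in an analytic set $Y$ of dimension at least $\ell-1$. This uses the fact that $Z$ is a hypersurface in $\ell+1$ variables and that $\projfiber$ is not locally constant on $Z$ near $\varZ$. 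If it were locally constant, $\projfiber(Z_0)$ would contain no neighborhood structure near $w$, and one handles this degenerate case separately: then $Z_0$ contains the whole local piece of $\{z_{\ell+1}=w\}\cap\ext{\cell}{1/6}\odot\cComplex$, whose connected component reaches the skeleton directly. In the generic case, the key observation is that $w$ being a boundary point of $\projfiber(\closure{Z_0})$ forces $\projfiber|_{Z}$ near $\varZ$ to be non-open onto a neighborhood of $w$. Combined with the open mapping theorem for holomorphic functions on irreducible analytic sets, this shows that $Y$ has a component along which $z_1$ is nonconstant, unless $z_1$ is already constant on the relevant local component. In the first case, the maximum principle applied to $z_1$ on $Y$ contradicts maximality. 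Since $Y$ is contained in $\closure{Z_0}$ locally, as $Z_0$ is a connected component and $\varZ$ is an interior point of the relevant cell, this contradiction lets us move $z_1$ to $\boundary$ of its fiber while staying in $K_w$.

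Then we iterate. We fix $z_1$ on the boundary circle and restrict to the slice $\{z_1=\text{const}\}$. Since the fiber of $Z_0$ over this slice is still a union of components of an analytic hypersurface in the fiber cell, the same argument applies to $z_2$, with radii now evaluated at the fixed $\initial{\varZ}{1}{1}$, and so on up to $z_\ell$. This produces $\varZ\in\closure{Z_0}$ with $z_{\ell+1}=w$ and $\initial{\varZ}{1}{\ell}\in\skeleton{\ext{\cell}{1/3}}$.

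The main obstacle is making the open-mapping step rigorous. Specifically, we need to show that at a point $\varZ$ where some base coordinate $z_j$ is interior, one can move within $\closure{Z_0}\cap\{z_{\ell+1}=w\}$ to increase $\abs{z_j}$. I would try to argue by contradiction via dimension. If near $\varZ$ every point of $Z$ with $z_{\ell+1}=w$ had $\abs{z_j}\le\abs{z_j(\varZ)}$, then the local analytic set $Y$ would be contained in $\{z_j=z_j(\varZ)\}$. This would make $\varZ$ an isolated-type point of the fibration $(z_j,z_{\ell+1})|_Z$, forcing $(z_j,z_{\ell+1})$ to be an open map on $Z$ near $\varZ$ onto a neighborhood of $(z_j(\varZ),w)$, and hence $w$ to lie in the interior of $\projfiber(Z_0)$, a contradiction. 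The slicing is done within $\ext{\cell}{1/3}$, where $Z_0$ is open in $Z$, so the local pieces stay in $Z_0$.
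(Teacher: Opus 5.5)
There is a genuine gap at the central step. The point $\mathbf{z}\in K_w$ that maximizes $|z_1|$ need not be interior in the remaining base coordinates; typically some $z_k$ with $k\geq 2$ lies on a boundary circle of $\mathcal{C}^{1/3}$. At such a point the local set $Y=Z\cap\{z_{\ell+1}=w\}$ leaves the closed cell, so it is contained neither in $\overline{Z_0}$ nor in $K_w$. The maximum principle for $z_1$ on $Y$ then only produces points with larger $|z_1|$ that may violate the defining inequality of the closed cell in some $z_k$, and this does not contradict maximality over $K_w$. Your justification (``$\mathbf{z}$ is an interior point of the relevant cell'') is exactly what fails. In fact, if $\mathbf{z}$ were interior to the whole cell, then $\mathbf{z}\in Z_0$, and the open mapping theorem on the local irreducible components of $Z$ would force $Z=\{z_{\ell+1}=w\}$ near $\mathbf{z}$. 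So your ``generic case'' is vacuous, and the real content of the lemma is never addressed: points of $K_w$ with some base coordinates interior and others on boundary circles. For the same reason, openness of $\pi_{\ell+1}$ or of $(z_j,z_{\ell+1})$ on $Z$ near $\mathbf{z}$ says nothing about $\pi_{\ell+1}(\overline{Z_0})$. The dimension argument in your last paragraph is also reversed. If $Y\subset\{z_j=z_j(\mathbf{z})\}$, then the fiber of $(z_j,z_{\ell+1})|_Z$ through $\mathbf{z}$ is all of $Y$, of dimension $\ell-1$ rather than $\ell-2$. That is the non-open situation, not an ``isolated'' one.

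The missing idea, which the paper uses, is to freeze every base coordinate except one. Normalize the radii so that coordinates lying on boundary circles stay on them, and consider the complex line $\{\mathbf{z}_{1..j-1}\}\times\mathcal{F}_j^{1/3}\times\{\mathbf{z}_{j+1..\ell}\}\times\{w\}$. There are two cases.
\begin{itemize}
\item If $\mathbf{z}$ is an isolated point of $Z$ on this line, Rouch\'e's theorem in the two variables $(z_j,z_{\ell+1})$ gives, for every $w'$ near $w$, a point $(\mathbf{z}_{1..j-1},z_j',\mathbf{z}_{j+1..\ell},w')\in\overline{Z_0}$. This point stays in the closed cell because only $z_j$ moved, which contradicts $w\in\partial\pi_{\ell+1}(\overline{Z_0})$.
\item Otherwise $Z$ contains the whole line slice, and $z_j$ can be slid toward the boundary of its fiber without disturbing the other coordinates.
\end{itemize}
The paper applies this dichotomy with $j$ the largest interior index and $\mathbf{z}$ chosen to minimize that $j$. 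Your coordinate-by-coordinate extremization would also work once the maximum principle on $Y$ is replaced by this one-variable slice argument. Small moves of $z_1$ keep interior coordinates interior and, after normalization, keep boundary coordinates on their circles.
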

\begin{proof}
	The assumption on the type of $\cell$ implies that the closure of $\ext{\cell}{1/3}\odot\annulus[2][3]$ in $\cComplex^{\ell+1}$ is compactly contained in $\ext{\cell}{1/6}\odot\annulus[1][6]$, and hence so is $\closure{Z_0}$.
	Therefore there exists $\varZ\in \closure{Z_0}$ such that $\varZ_{\ell+1}=w$.
	If $\initial{\varZ}{1}{\ell}\in \skeleton{\ext{\cell}{1/3}}$, we are done.
	Otherwise, there exists some coordinate $1\leq j\leq \ell$ such that
	\begin{equation}
		\abs{\varZ_j}<3\abs{r_j(\initial{\varZ}{1}{j-1})}
	\end{equation}
	when the $j$-th coordinate of $\cell$ is a disc, or
	\begin{equation}
		\frac{1}{3}\abs{r_{j,1}(\initial{\varZ}{1}{j-1})}<\abs{\varZ_j}<3\abs{r_{j,2}(\initial{\varZ}{1}{j-1})}
	\end{equation}
	when the $j$-th coordinate of $\cell$ is an annulus.
	We consider the maximal such $j$ for this choice of $\varZ$, meaning that $\varZ_{j+1},\dots,\varZ_{\ell}$ all lie on boundary components of the corresponding coordinates of $\ext{\cell}{1/3}$.
	By normalizing $\cell$, we may assume that the radii determining these boundary components are all constant.
	Furthermore, we choose $\varZ$ such that $j$ is minimal over all possible choices of $\varZ$.
	
	Let $\cell[F]_j=\cell[F]_j(\initial{\varZ}{1}{j-1})$ be the first coordinate of the cell $\cell_{\initial{\varZ}{1}{j-1}}\subset\cComplex^{\ell-j+1}$ (see \Cref{def: initial and fiber notation} for this notation).
	By minimality of $j$, we have that $\varZ$ is an isolated point of
	\begin{equation}
		\closure{Z_0}\cap \qty(\qty{\initial{\varZ}{1}{j-1}}\times\ext{\cell[F]_j}{1/3}\times\qty{\initial{\varZ}{j+1}{\ell}}\times\qty{w}).
	\end{equation}

	If $2<\abs{w}<3$, \Rouche's theorem implies that for any $w'$ sufficiently close to $w$ there is $z_j'\in \ext{\cell[F]_j}{1/3}$ such that 
	$\qty{\initial{\varZ}{1}{j-1}}\times\{z_j'\}\times\qty{\initial{\varZ}{j+1}{\ell}}\times\qty{w'}\subset\closure{Z_0}$ and hence $w'\in\projfiber(\closure{Z_0})$, contradicting our assumption that $w\in \boundary(\projfiber(\closure{Z_0}))$.
\end{proof}

We now use \Cref{lem: skeleton controls boundary analytic set} to reduce \Cref{prop: RB} to the case where the type of $\cell$ contains only $\disc$ (see \Cref{fig: RB}).
Let $\cell,Z,Z_0$ be as in \Cref{prop: RB} and assume that the type of $\cell$ contains only $\disc$ and $\annulus$.
Denote by $\circle[r]$ the circle of radius $r$ around the origin.
For $x\in\operatorname{abs}({\projfiber(\closure{Z_0})})$, the preimage $\operatorname{abs}^{-1}(x)$ in $\projfiber(\closure{Z_0})$ is either the circle $\circle[\abs{x}]$  or else it intersects $\boundary \projfiber(\closure{{Z_0}})$.

Let $\{\Gamma_i\}$ be the collection of connected components of $\boundary\projfiber(\closure{{Z_0}})$ and let $\{\Gamma^*_i\}$ be the collection of connected components of the set $\{z\in \cComplex\st \circle[\abs{z}]\subset \projfiber(\closure{Z_0})\}$.
Each of these collections is of size $\polyfld$.

The connected set $\operatorname{abs}(\projfiber(\closure{{Z_0}}))$ is covered by the sets$\{\operatorname{abs}(\Gamma_i)\}$ and $\{\operatorname{abs}(\Gamma^*_i)\}$, and so $\diam{\operatorname{abs}({\projfiber(Z_0)})}[\rReal]$ is at most
\begin{equation}
	\label{eq: radial bound gammas}
	\sum_i \diam{\operatorname{abs}({\Gamma_i})}[\rReal]+\sum_i \diam{\operatorname{abs}({\Gamma^*_i})}[\rReal].
\end{equation}
Since $\projfiber(Z_0)\cap \epsilon\zIntegers^2=\emptyset$, we have $\diam{\operatorname{abs}({\Gamma^*_i})}[\rReal]<\epsilon$ for all $i$, and so \eqref{eq: radial bound gammas} is at most
\begin{equation}
	\sum_i \diam{\operatorname{abs}({\Gamma_i})}[\rReal]+\epsilon\cdot\polyfd[\format,\ell][\degree].
\end{equation}
Hence it remains to show that the diameter of each of the sets ${\operatorname{abs}({\Gamma_i})}$ is bounded by $\epsilon\cdot\polyfld$.

As in the proof of the sharp refinement theorem (\Cref{thm:sharp refinement}), we may cover $\skeleton{\ext{\cell}{1/3}}$ by $\order[\ell]{1}$ cellular polydiscs $\{\polydisc_j\}$ whose $1/6$-extensions lie in $\ext{\cell}{1/6}$.
Let $\{Y_{j,k}\}$ be the $\polyfld$ connected components of $Z\cap(\ext{\polydisc_j}{1/3}\odot\annulus[2][3])$.
By \Cref{lem: skeleton controls boundary analytic set}, each component $\Gamma_i$ is covered by the projections $\{\projfiber(\closure{ Y_{j,k}})\}$ and by the circles of radius $2$ and $3$.
These circles do not contribute to the diameter of $\operatorname{abs}({\Gamma_i})$, however.
Assuming \Cref{prop: RB} holds with $\cell$ replaced by each of the cellular polydiscs $\polydisc_j$ and summing~\eqref{eq: radial bound} over all $j,k$, we get that $\diam{\operatorname{abs}({\Gamma_i})}[\rReal]<\epsilon\cdot\polyfld$, as required.

\begin{figure}
	\includegraphics[width=0.66\linewidth]{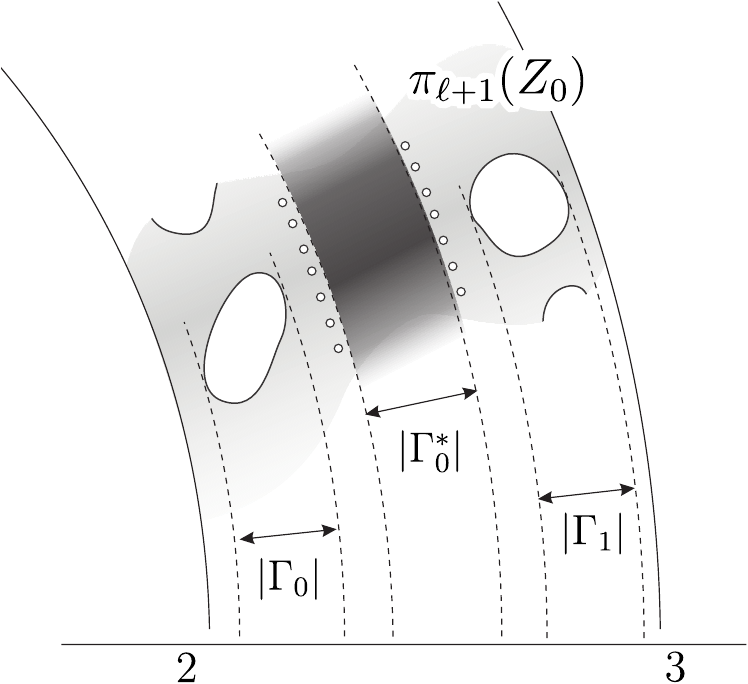}
	\caption{\emph{Bounding $\diam{\operatorname{abs}({\projfiber(Z_0)})}[\rReal]$ when the type of $\cell$ contains annuli.}
	All change in the radial direction of $\projfiber(Z_0)$ is accounted for by either a connected component $\Gamma_i$ of the boundary of $\projfiber(Z_0)$ or by an annulus $\Gamma^*_i$ which is entirely contained in $\projfiber(Z_0)$.
	The boundary of $\projfiber(Z_0)$ has few connected components, each of which has small radial diameter by the case where $\cell$ is a cellular polydisc.
	There are also few such maximal annuli $\Gamma^*_i$ and, since they cannot contain any point of an $\epsilon$-net, their width is small.
	}
	\label{fig: RB}
\end{figure}

\subsubsection{Proof of \texorpdfstring{$\text{RB}_1$}{RB1}}

It remains to treat the case where the type of $\cell$ contains only discs.
We may normalize $\cell$ and assume $\cell=\disc[1]^{\times \ell}$ (see \Cref{rem: normalized cells}).
The intersection of a Euclidean ball in $\cComplex^{\ell+1}$ and any fixed translation of $Z_0$ is in $\FD[\format'][\degree']$ where $\format'=\order[\format*]{1}$ and $\degree'=\polyfld$.
Let $\gamma<1-1/\polyfd[\format',\ell][\degree']$ be as in \Cref{prop: Weierstrass polydiscs}.
We may take $\gamma$ to be larger than $1/2$.
The bounds $\order*[\ell]{1}<\gamma<1-1/\polyfld$ imply that the hyperbolic extension parameters corresponding to an $\gamma$-extension and to an $(1-\gamma)$-extension, as well as the inverses of these hyperbolic extension parameters, are all at most $\polyfld$.

We cover ${\ext{\cell}{1/3}\odot\annulus[2][3]}$ by a collection $\mathcal{B}$ of $\polyfld$ Euclidean balls whose $(1-\gamma)^2$-extensions lie in $\ext{\cell}{1/6}\odot\annulus[1][6]$.
Let $\ball\in\mathcal{B}$ be one of these balls and let $\widetilde Z$ be the intersection of $Z_0$ with $\ext{\ball}{(1-\gamma)^2}$.
Without loss of generality and to ease notation, we assume that the center of $\ball$ is at the origin.

We apply \Cref{prop: Weierstrass polydiscs} to $\widetilde{Z}$ and $\ext{\ball}{(1-\gamma)^2}$ and denote by $\polydisc$ and $U$ the resulting polydisc and unitary transformation.
Write $\polydisc=\polydisc*[1..\ell][\ell+1]$, where $\polydisc[1..\ell]\subset\cComplex^\ell$ and $\polydisc[\ell+1]\subset\cComplex$.
By construction, $\polydisc*[1..\ell][\ell+1]$ is a Weierstrass cell of gap $\gamma$ for $U^{-1}(\widetilde{Z})$ (see \Cref{def: weierstrass cell}).
We also have
\begin{equation}
	\label{eq: polydisc contained in ball}
	\ext{\ball}{1-\gamma}\subset\polydisc\subset \ext{\ball}{(1-\gamma)^2}
\end{equation}
and hence
\begin{equation}
	\ball\subset \ext{\polydisc}{1/(1-\gamma)}\subset \ext{\ball}{1-\gamma}.
\end{equation}

Since $\gamma>1/2$, we have $\gamma>1-\gamma$.
Therefore,
\begin{equation}
	\label{eq: ball contained in polydisc}
	\ball \subset \qty(\ext{\polydisc[1..\ell]}{1/(1-\gamma)}\times\ext{\polydisc[\ell+1]}{1/\gamma}) \subset \polydisc.
\end{equation}

We now apply $(\text{\s CPT}_1)_{\leq \ell}$ to $\polydisc$ and $U^{-1}(\widetilde{Z})$, taking $\polydisc$ to be the $(1-\gamma,\gamma)$ extension of $\ext{\polydisc[1..\ell]}{1/(1-\gamma)}\times\ext{\polydisc[\ell+1]}{1/\gamma}$ in this application.
Let $\{f_j:\hExt{\cell_j}{\rho}\to\polydisc\}$ be the resulting cellular cover, where $\rho=\order{1}$ is chosen small enough such that the conditions of the fundamental lemma (\Cref{lem:fundamental lemma}) are satisfied for $\hExt{\cell_j}{\rho}$.
We can also choose this $\rho$ to be $\order*{1}$ and so the size of this cover is $\polyfld$.

We consider from now only those indices $j$ such that the image of $f_j$ intersects $U^{-1}(\widetilde{Z})$.
By \eqref{eq: polydisc contained in ball} and the construction of $B$, the images of the cells $\hExt{\cell_j}{\rho}$ under the holomorphic maps $\projfiber\comp U \comp f_j$ lie in $\latticeComplement[\epsilon]$.

By the fundamental lemma (\Cref{lem:fundamental lemma}), we have
\begin{equation}
	\label{eq: radial bound image of cell}
	\diam{\projfiber\comp U\comp f_j ({\cell}_j)}[\cComplex]<\order[\ell]{\epsilon}.
\end{equation}
The sets $\{f_j(\cell_j)\}$ cover $U^{-1}(\widetilde{Z})\cap\qty(\ext{\polydisc[1..\ell]}{1/(1-\gamma)}\times\ext{\polydisc[\ell+1]}{1/\gamma})$ and hence also cover $U^{-1}(\widetilde{Z})\cap B=U^{-1}(Z_0\cap B)$ by \eqref{eq: ball contained in polydisc}.
Let $\{Y_k\}$ be the $\polyfld$ connected components of $Z_0\cap B$.
Summing \eqref{eq: radial bound image of cell} over $j$ yields
\begin{equation}
	\label{eq: radial bound component of ball intersection}
	\diam{\projfiber(Y_k)}[\cComplex]<\epsilon\cdot\polyfld.
\end{equation}
In the same manner, since $Z_0$ is connected, we may sum \eqref{eq: radial bound component of ball intersection} over all $k$ and all balls in $\mathcal{B}$ to get
\begin{equation}
	\diam{\projfiber(Z_0)}[\cComplex]<\epsilon \cdot \polyfld
\end{equation}
and hence also
\begin{equation}
	\diam{\operatorname{abs}({\projfiber(Z_0)})}[\rReal]<\epsilon\cdot \polyfld
\end{equation}
as required.

\subsection{Proof of the \texorpdfstring{\s CPrT}{\#CPrT}}
\label{sec: CPrT}
In this section, we prove the \s CPrT (\Cref{thm:sharp preparation}), assuming the ``weak version'' of the \s CPT (where we do not assume the resulting cellular maps are prepared in the sense of \Cref{def: prepared map}) which we proved in \Cref{sec: step 1 proof,sec: step 2 proof}, and thus conclude the proof of the \s CPT as stated in \Cref{thm:cpt}.
The proof is similar to that in~\cite[Section 8.2]{BinyaminiNovikov2019}, but is somewhat more streamlined and resolves a small technical issue in the original --- the main differences are the different definition of cellular map (\Cref{def: cellular map}) that we use and replacing the use of \cite[Theorem 10, WPT]{BinyaminiNovikov2019} by an application of the \s CPT.

By induction on the length of the cells as in ~\cite{BinyaminiNovikov2019}, it is enough to prove the following lemma (cf.\ \cite[Lemma 76]{BinyaminiNovikov2019}).
\begin{lemma}
	Let $f:\hExt*{\cell\odot\cell[F]}{\rho}\to\cComplex^{\ell+1}$ be a cellular map in $\FD[\format][\degree]$.
	Then there exists a cellular cover $\{g_j:\hExt{\cell_j}{\sigma}\to\hExt*{\cell\odot\cell[F]}{\rho}\}$ of size $\polyfd[\format*][\degree,\rho,1/\sigma]$ such that each $g_j$ is in $\FD*[\format*][\degree]$ and each $f\comp g_j$ is prepared in the last variable.
\end{lemma}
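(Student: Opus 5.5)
We follow the strategy of \cite[Lemma 76]{BinyaminiNovikov2019}, replacing the Weierstrass preparation step by an application of the (weak) \s CPT already established in \Cref{sec: step 1 proof,sec: step 2 proof}.
Write $f=(f_1,\dots,f_{\ell+1})$ and $w=z_{\ell+1}$.
If $\cell[F]=\point$ there is nothing to prove, so assume otherwise.
By \Cref{def: cellular map}, $f_{\ell+1}(\varZ,w)$ is holomorphic on $\hExt*{\cell\odot\cell[F]}{\rho}$ and $\partial_w f_{\ell+1}$ vanishes nowhere.
So for each fixed $\varZ$, the map $w\mapsto f_{\ell+1}(\varZ,w)$ is a locally univalent holomorphic map on the fiber $\hExt{\cell[F]}{\rho}(\varZ)$.
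As in \Cref{sec: step 2 proof}, we first use \Cref{thm:sharp refinement} to reduce to $\rho$ and $\sigma$ equal to fixed constants.

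\textbf{Case of a disc fiber.}
Here the aim is a cover by cells on which $f_{\ell+1}(\varZ,\cdot)$ is injective on the fiber.
Then the reparameterization $(\varZ,u)\mapsto(\varZ,\,(f_{\ell+1}(\varZ,\cdot))^{-1}(u+f_{\ell+1}(\varZ,c(\varZ))))$ is cellular, and composing with it makes $f$ a translate in the last variable.
To achieve injectivity, we would invoke the \s CPT (weak version) on the cell $\hExt{\cell}{\rho}\odot\hExt{\cell[F]}{\rho}\odot\hExt{\cell[F]}{\rho}$ of length $\ell+2$.
It is applied to the hypersurface $\{(\varZ,w,w'):f_{\ell+1}(\varZ,w)=f_{\ell+1}(\varZ,w'),\ w\neq w'\}$, whose closure is analytic by \Cref{thm: Remmert--Stein} and \Cref{lem: definable hypersurface}, together with the diagonal.
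The complexity of this hypersurface is in $\FD*[\format*][\degree]$ by \Cref{prop: so minimality formula complexity}.

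The resulting cells, projected to the first $\ell+1$ coordinates, cover the original cell.
Compatibility, combined with the fundamental-lemma type argument of \Cref{sec:fundamental lemma} (via the Schwarz--Pick/Koebe distortion on cells with large extension), should ensure that on each resulting cell $f_{\ell+1}(\varZ,\cdot)$ is univalent on a fiber neighbourhood.
Concretely, the plan is to cover by cells whose last-coordinate fibers have hyperbolic diameter $O(1)$ inside regions where $f_{\ell+1}$ is univalent.
This step (making injectivity uniform over the base with $\polyfd$ cells) is the main obstacle.
The first attempt will be to bound, via \so-minimality (\Cref{prop: so minimality connected component bound}), the number of preimages $\#\{w:f_{\ell+1}(\varZ,w)=u\}$ by $\polyfd$, and then to use the clustering constructions of \cite[Section 6.3]{BinyaminiNovikov2019} to separate branches.

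\textbf{Case of annular fibers} ($\cell[F]=\puncDisc,\puncDisc*,\annulus$).
The generating circle in the fiber maps under $f_{\ell+1}(\varZ,\cdot)$ to a loop around a point $g(\varZ)$ with winding number $q\in\zIntegers\setminus\{0\}$.
The number $q$ is bounded by the preimage count above, and it is constant in $\varZ$ by continuity.
We would first apply the \s CPT in the base so that $f_{\ell+1}(\varZ,\cdot)$ omits a holomorphically varying center $g(\varZ)$ on the fiber.
The center $g$ is obtained as a definable section, e.g.\ the residue-type average $\frac{1}{2\pi i}\oint f\,\partial_w f/(f-\cdot)$, or a point of the hole of the image, made holomorphic via the fundamental lemma.
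Then $(f_{\ell+1}-g)$ has a holomorphic $q$-th root $h$ with $\partial_w h\neq0$ and winding number one.
Then $h(\varZ,\cdot)$ is univalent on the annulus, and composing with its inverse, as in the disc case, yields $f_{\ell+1}\comp g_j=\pm u^{q}+g$, which is prepared.
The sign and negative exponents arise from inversions in \Cref{rem: normalized cells}.

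Throughout, the maps we compose with are translates in coordinates $1,\dots,\ell$, so the earlier variables are unaffected by \Cref{rem: composing prepared maps}.
Sizes are $\polyfd[\format*][\degree]$ and complexities are in $\FD*[\format*][\degree]$ by \Cref{rem : no M in complexity}.
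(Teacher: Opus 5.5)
There is a genuine gap. The step that carries the whole argument is never completed, and the parts you do specify do not work as stated.

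\textbf{Disc case.} You flag uniform injectivity as the main obstacle, and the proposed tool does not supply it. A cellular map into $\mathcal{C}\odot\mathcal{F}\odot\mathcal{F}$ that avoids $\{f_{\ell+1}(\mathbf{z},w)=f_{\ell+1}(\mathbf{z},w'),\ w\neq w'\}\cup\{w=w'\}$ controls pairs $(w,w')$ in which the range of $w'$ depends on $w$. It never gives product sets $W(\mathbf{z})\times W(\mathbf{z})$. So projecting to the first $\ell+1$ coordinates does not yield cells on which $f_{\ell+1}(\mathbf{z},\cdot)$ is injective. Even granting injectivity, composing with the inverse requires the image $f_{\ell+1}(\mathbf{z},W(\mathbf{z}))$ to be a round disc or annulus with radii holomorphic in $\mathbf{z}$, which it is not. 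Covering such images uniformly by complex cells is itself a \s CPT-type problem that you leave open.

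\textbf{Annular case.} The structural claim is false in general. Suppose $f_{\ell+1}(\mathbf{z},\cdot)-g(\mathbf{z})=h^q$ with $h$ univalent of winding number one. Then the image of each fiber circle has turning number $1+(q-1)+\mathrm{wind}(h')=q\neq0$. However, a locally univalent map on an annulus can have turning number $0$ along circles. For example, take $f'(w)=e^{aw+b/w}/w$ with $J_0(2\sqrt{-ab})=0$. The residue vanishes, so $f$ is single valued on $\mathbb{C}\setminus\{0\}$, while $\mathrm{wind}(f')=-1$. Hence the annulus must in general be subdivided, and the omitted ``center'' $g$ need not exist; its defining formula is also not actually specified.

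\textbf{The missing idea.} The paper avoids inverting $f_{\ell+1}$ altogether with a graph trick.

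\begin{itemize}
\item Insert the value as a new coordinate before $z_{\ell+1}$: work on $\mathcal{C}\odot\mathbb{C}\odot\mathcal{F}$ (\Cref{rem: complex basic fiber}) and apply the weak \s CPT to the hypersurface $Z=\{w=\widetilde f(\mathbf{z}_{1..\ell+1})\}$.
\item For a covering map $g_j$ with image in $Z$, the condition $\partial_{z_{\ell+1}}\widetilde f\neq 0$ makes the fibers of $Z\to(\mathbf{z}_{1..\ell},w)$ discrete. This forces the last fiber of the covering cell to be of type $\point$.
\item The maps produced by the \emph{proof} of the \s CPT are translates in the last variable and prepared in the next-to-last one. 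So the $w$-component of $g_j$ is $\pm\eta^{q_j}+\phi_j(\zeta)$.
\item Dropping the $w$-component gives $\widetilde g_j$ with $\widetilde f\circ\widetilde g_j=\pm\eta^{q_j}+\phi_j(\zeta)$.
\item Cellularity of $\widetilde g_j$ follows from the chain rule $\pm q_j\eta^{q_j-1}=\partial_{z_{\ell+1}}\widetilde f\cdot\partial_\eta (g_j)_{\ell+2}$.
\item The cover property holds because every graph point $(\mathbf{z}_{1..\ell},\widetilde f(\mathbf{z}),z_{\ell+1})$ is covered.
\end{itemize}

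In this way injectivity, centers, winding numbers and the covering of images by cells are all absorbed into the \s CPT, together with its size and complexity bounds.
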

\begin{proof}
	We first note that if $\cell[F]=\point$, then the map $f$ is already prepared in the last variable, and so we assume from now on that $\cell[F]$ is of type $\disc$, $\puncDisc$, $\puncDisc*$ or $\annulus$.
	
	Let $\widetilde f$ denote the last coordinate of $f$.
	Assuming $\cell\odot\cell[F]$ is of length $\ell+1$ with coordinates $\initial{\varZ}{1}{\ell+1}$, we consider the cell $\widetilde \cell$ given by $\cell\odot\cComplex\odot\cell[F]$ (see \Cref{rem: complex basic fiber} for this notation), which also admits a $\hyperbolicParameter{\rho}$-extension, and denote its coordinates by $\initial{\varZ}{1}{\ell},w,\varZ_{\ell+1}$.
	
	Let $Z\subset{\hExt{\widetilde \cell }{\rho}}$ be given by $\{w=\widetilde f(\initial{\varZ}{1}{\ell+1})\}$.
	We apply the \s CPT to $\hExt{\widetilde \cell }{\rho}$ and $Z$ to obtain a cellular cover $\{g_j:\hExt*{\cell_j\odot\cell[F]_j}{\sigma}\to\hExt{\widetilde \cell}{\rho}\}$ compatible with $Z$.

	Let $g_j$ be one of the maps in this cover such that $g_j(\cell_j\odot \cell[F]_j)\subset Z$.
	By \Cref{def: cellular map}, we have that $\pdv{\varZ_{\ell+1}}\widetilde{f}\neq 0$ and so the fibers of the natural projection $Z\to \initial{\hExt{\widetilde{\cell}}{\rho}}{1}{\ell+1}$ are all zero-dimensional.
	Therefore the fiber $\cell[F]_j$ must be of type $\point$.
	We denote the coordinates of $\cell_j\odot\cell[F]_j$ by $\initial{\varZeta}{1}{\ell},\eta,\point$.
	
	As in~\cite[Remark 75]{BinyaminiNovikov2019}, one may check that the cellular maps constructed in the proof of the \s CPT are translates in the last variable.
	Furthermore, they are prepared in the next-to-last variable --- during the proof we first apply the \s CPT in the base of the cell, yielding a translate in the next-to-last coordinate; We then possibly precompose with a power map in the base (see~\cite[Section 2.6]{BinyaminiNovikov2019} and \Cref{rem: composing prepared maps}).
	Hence, we have
	\begin{equation}
		g_j(\initial{\varZeta}{1}{\ell},\eta,\point)=(\dots,\pm\eta^{q_j}+\phi_j(\initial{\varZeta}{1}{\ell}),(g_j)_{\ell+2}(\initial{\varZeta}{1}{\ell},\eta))
	\end{equation}
	for some holomorphic function $\phi_j$ and $q_j\in\zIntegers_{\neq 0}$.
	We note that if $q_j\neq 1$, then the last coordinate of $\cell_j$ is not of type $\disc$.
	Thus, for all $q_j$, we either have $q_j \eta^{q_j -1}\neq 0$ or that the last coordinate of $\cell_j$ is of type $\point$.
	
	Let $\widetilde g_j=(\initial{(g_j)}{1}{\ell},(g_j)_{\ell+2}):\hExt{\cell_j}{\sigma}\to\hExt*{\cell\odot\cell[F]}{\rho}$.
	By definition of $Z$, we have that the last coordinate $\widetilde f \comp \widetilde g_j$ of $f\comp \widetilde g_j$ is equal to $\pm\eta^{q_j}+\phi_j(\initial{\varZeta}{1}{\ell})$.
	
	If the last coordinate of $\cell_j$ is of type $\point$, then $\widetilde g_j$ is clearly a cellular map, since $g_j$ is cellular.
	Assuming otherwise and noting that none of the coordinates $\initial{(g_j)}{1}{\ell}$ depend on $\eta$, we have
	\begin{equation}
		\pm q_j \eta^{q_{j}-1}
		=
		\pdv{\eta}(\widetilde f \comp \widetilde g_j)
		=
		\pdv{\varZ_{\ell+1}}\widetilde{f}
		\cdot
		\pdv{\eta}(g_j)_{\ell+2}.
	\end{equation}
	Therefore $\pdv{\eta}(g_j)_{\ell+2}\neq0$ and so $\widetilde g_j$ is a cellular map.
	
	It remains to check  that $\{\widetilde g_j:\hExt{\cell_j}{\sigma}\to\hExt*{\cell\odot\cell[F]}{\rho}\}$ is a cellular cover.
	Let $\initial{\varZ}{1}{\ell+1}\in\cell\odot\cell[F]$ and let $w=\widetilde{f}(\initial{\varZ}{1}{\ell+1})$.
	There is some $j$ such that ${(\initial{\varZ}{1}{\ell},w,\varZ_{\ell+1})\in g_j(\cell_j\odot\cell[F]_j)}$ and hence also ${\initial{\varZ}{1}{\ell+1}\in \widetilde g_j(\cell_j)}$, which finishes the proof.
\end{proof}

\appendix
\section{The real setting}
\label{sec: real cells}

We recall the notion of a \emph{real complex cell} and of \emph{real cellular maps} (see~\cite[Section 2.2.2]{BinyaminiNovikov2019}).
\begin{definition}
	The \emph{real part} $\rReal\cell$ of a complex cell $\cell$ of length $\ell$ is the intersection of $\cell$ with $\rReal^\ell$.
	The \emph{positive real part} $\rRealPos\cell$ of $\cell$ is similarly obtained by intersecting each coordinate of $\cell$ which is not of type $\point$ with the positive real numbers $\rRealPos=(0,\infty)$.
	A holomorphic function $f:\cell\to\cComplex$ is called \emph{real} if it is real valued on $\rReal\cell$.
	Finally, a complex cell $\cell$ is called \emph{real} if all radii defining its components are real holomorphic functions over their corresponding bases.
\end{definition}
We note that a real complex cell of length $\ell$ is a subset of $\cComplex^\ell$ and not of $\rReal^\ell$.

\begin{definition}
	Let $\cell$ be a complex cell and let $Z\subset\cell$.
	We will call $Z$ \emph{symmetric} if it is invariant under the conjugation $\varZ\mapsto\conjugate{\varZ}$.
\end{definition}
Thus real complex cells and the zero-sets of real holomorphic maps defined on real complex cells are symmetric.

\begin{definition}
	\label{def: real map and real cover}
	Let $\ext{\cell}{\delta}$ be a real complex cell.
	A cellular map $f:\cell\to\cComplex^\ell$ is \emph{real} if each component of $f$ is a real holomorphic function.
	A finite collection $\{f_j:\ext{\cell_j}{\delta'}\to\ext{\cell}{\delta}\}$ of real cellular maps is a \emph{real cellular cover} if $\rReal\cell\subset\cup_j (f_j(\rRealPos\cell_j))$.
\end{definition}

As in~\cite{BinyaminiNovikov2019}, the statements of the sharp refinement theorem, the \s CPT and the \s CPrT (\Cref{thm:sharp refinement,thm:cpt,thm:sharp preparation}, respectively) all have real versions, where we replace all complex cells, cellular maps and cellular covers in the hypotheses and conclusions by their real counterparts (and in the case of the \s CPT, assume that the sets $Z_i$ are symmetric).

The small necessary modifications to the proofs are the same as in~\cite{BinyaminiNovikov2019}.
We note that in \Cref{sec: removing an epsilon net} of the proof of the \s CPT, when intersecting the sets $Z_i$ with graphs of the constant functions $p\in\epsilon\zIntegers^2\subset\cComplex$, we may group these graphs by conjugate pairs in order to ensure that the resulting intersections and their projections are symmetric.

\begin{remark}
	In his master's thesis~\cite{Shankar2025}, Shankar proves a version of the real CPT in the algebraic setting where, in addition to the desired size and complexity bounds on the resulting cellular covers, the images of the positive real parts of the covering cells are disjoint.
	We refer to such covers as \emph{disjoint real cellular covers}.

	Though not necessary for this paper, this improvement is important in applications, for example to preparation theorems as in~\cite{CarmonAnalyticallyGenerated,BinyaminiCarmonNovikovLE}.
	
	The methods of~\cite{Shankar2025} extend to the \so-minimal setting, subject to to the following modifications.
	First, one must use arguments as in \Cref{sec: step 1 proof} to treat certain discriminant sets which appear in the proofs.
	Second, in all (inductive) applications of the \s CPT during the proofs, we include $\{z_i=\pm1\}$ and $\{z_i=\pm 2\}$ among the collection of hypersurfaces $\{Z_j\}$.
	This ensures that when splitting $\cComplex$-fibers according to \Cref{rem: complex basic fiber} (for example, in the proof of the \s CPrT), we may choose a subcollection of the resulting cellular covering maps which together form a disjoint real cellular cover.
	
	One may thus take the covers obtained by the real \s CPT to be disjoint real cellular covers.
\end{remark}

\bibliographystyle{abbrv}
\bibliography{references}
\end{document}